\DeclareMathAlphabet{\mathpzc}{OT1}{pzc}{m}{it}
\theoremstyle{definition}
\newtheorem{theorem}{Theorem}[section]
\newtheorem{proposition}[theorem]{Proposition}
\newtheorem{lemma}[theorem]{Lemma}
\newtheorem{definition}[theorem]{Definition}
\newtheorem{ass}[theorem]{Assumption}
\numberwithin{equation}{section}
\newcommand{\subjclass}[1]{\bigskip\noindent\emph{2010 Mathematics Subject Classification:}\enspace#1}
\newcommand{\keywords}[1]{\noindent\emph{Keywords:}\enspace#1}
\begin{document}


\baselineskip=17pt


\title{Fractional Elliptic Quasi-Variational Inequalities: \\ Theory and Numerics}

\author{Harbir Antil\\
Department of Mathematical Sciences,  George Mason University, \\ Fairfax, VA 22030, USA.\\
\texttt{hantil@gmu.edu}\\
Carlos N. Rautenberg\\
Institut f\"ur Mathematik,  Humboldt Universit\"at zu Berlin, \\ Unter den Linden 6, D-10099 Berlin, Germany.\\
\texttt{carlos.rautenberg@mathematik.hu-berlin.de}}

\date{}

\maketitle


\begin{abstract}
This paper introduces an elliptic quasi-variational inequality (QVI) problem class with fractional diffusion of order $s \in (0,1)$, studies existence and uniqueness of solutions and develops a solution algorithm. As the fractional diffusion prohibits the use of standard tools to approximate the QVI, instead we realize it as a Dirichlet-to-Neumann map for a 
problem posed on a semi-infinite cylinder. We first study existence and uniqueness of solutions for this extended QVI and then transfer the results to the fractional QVI: This introduces a new paradigm in the field of fractional QVIs. Further, we truncate the semi-infinite cylinder and show that the solution to the truncated problem converges to the solution of the extended problem, under fairly mild assumptions, as the truncation parameter $\tau$ tends to infinity. Since the constraint set changes with the solution, we develop an argument  using Mosco convergence. We state an algorithm to solve the truncated problem and show its convergence in function space. Finally, we conclude with several illustrative numerical examples. 

\subjclass{35R35,  	  
35J75,    
26A33,    
49M25,    
65M60.    
}

\keywords{quasivariational inequality, QVI, fractional derivatives, fractional diffusion, free boundary problem,  Caffarelli-Silvestre and Stinga-Torrea extension, weighted Sobolev spaces, Mosco convergence, fixed point algorithm, finite element method.}
\end{abstract}

\section{Introduction}
\label{sec:introduccion}
The purpose of this work is twofold: 1) To introduce a new class of quasi-variational inequalities (QVIs) involving a fractional power of an elliptic operator and study existence and uniqueness of solutions; 2) To develop a solution algorithm suitable for numerical implementation. The problem class of interest is the following: Let $\Omega$ be an open, bounded and connected domain of $\R^n$, $n\geq1$, with Lipschitz boundary $\partial\Omega$  and  $\fsf \in L^\infty(\Omega)$ non-negative be given. Consider the following \emph{fractional QVI}:
\begin{equation}
\label{fractional:QVI}\tag{$\mathbb{P}$}
\text{Find }\usf \in K(\usf): \left\langle \mathcal{L}^s \usf, \usf - \vsf \right\rangle_{-s,s} \le \langle \fsf, \usf - \vsf \rangle_{-s,s}  \text{ in } \Omega, \qquad \forall \vsf \in K(\usf) ,  
\end{equation}
where $\wsf\mapsto K(\wsf)$ is defined as 
\begin{equation}\label{eq:Kset}
K(\wsf) := \{ \vsf \in \Hs \; | \; \vsf \le \Psi(\wsf) \quad \text{a.e. in } \Omega\},
\end{equation}
$ \Hs$ is defined in section \ref{sub:notation}, $\Psi(\usf):\Omega\to\mathbb{R}$ is measurable and non-negative for all $\usf\in \Hs$  and  additional assumptions are later specified on $\Psi$ in section \ref{s:FQVIvsQVI}.

The operator $\calLs$, $s \in (0,1)$, is a fractional power of the second order, symmetric and uniformly elliptic operator $\mathcal{L}$, supplemented with homogeneous Dirichlet boundary conditions (see \cite{antil2017fractional} for a discussion on nonhomogeneous boundary conditions). That is, $\mathcal{L} w := - \DIV_{x} (A \nabla_{x} w ) + c w,$
where $0 \leq c \in L^\infty(\Omega)$,  and $A(x) = A^{ij}(x) = A^{ji}(x)$, $i,j=1,\dots,n$, is bounded, measurable in $\Omega$ and satisfies the uniform ellipticity condition $\Lambda_1 |\xi|^2 \le A(x) \xi \cdot \xi \le \Lambda_2 |\xi|^2$, for all $\xi \in \mathbb{R}^n$ for almost every $x \in \Omega$, for some ellipticity constants $0 < \Lambda_1 \le \Lambda_2$. 

Fractional derivatives have been around for as long as the standard derivatives. The recent popularity of this topic can be attributed to advancements in computing (fractional operators usually lead to dense systems) and a few applications, for instance image processing and phase field models \cite{HAntil_SBartels_2017a}, turbulence \cite{wow,NEGRETE} etc. 
In particular, the study of constrained optimization problems such as the fractional obstacle problem (both elliptic and parabolic, and $\Psi$ independent of $\usf$) have been the focal point of recent research. Such problems appear, for instance, in finance as a pricing model for American options, we refer to \cite{MR2064019} for modeling and \cite{MR3100955,otarola2015finite} for a functional analytic and numerical treatment of the underlying variational inequalities.

When $s = 1$, QVIs are known to appear in many applications: They arise for instance in game theory, solid mechanics, elastoplasticity and superconductivity. We refer to \cite{MR1951035, MR0464857, harker1991generalized, MR2525009, MR1401169, MR1765905,MR3299010,MR3307334,MR2997552,MR2648182,MR1765540,MR2947539,MR2410744,Laetsch1975286} and the monographs \cite{MR745619, MR2363978} as well as the references therein for diverse theoretical approaches and possible applications.  The development of approximation methods and solution algorithms for QVIs require problem-tailored approaches due to their non-convex and non-smooth nature. Although some work has been done in finite dimensions,   the literature in infinite dimensions is rather scarce. The first sequential method of approximation of solutions was developed by Bensoussan (an account can be found in \cite{MR652685,MR756234, MR635927}) where ordering properties are exploited and convergence rates for such problems were obtained in \cite{MR538562,MR533179}.  The semismooth Newton in combination with fixed point approaches have been developed in \cite{MR3648950,MR3023771,MR3119319} for gradient and obstacle type constraints and several approaches involving dualization of the problems were pioneered by Prigozhin and Barrett (see \cite{MR3335194,MR3231973,MR3082292,MR2652615}).

In view of the aforementioned applications of fractional order PDEs and QVIs,  it is only natural to merge these ideas together  which then leads to \eqref{fractional:QVI}. To the best of our knowledge this is the first work that addresses the well-posendess of \eqref{fractional:QVI} and develops solution algorithms for such a problem. Further, we provide a precise example of application in what follows, and for the sake of simplicity we consider it on an unbounded domain. 
Let 
$\Omega= \mathbb{R}^{n}$ with $n\ge 1$ denote the location of a semi-permeable membrane that forms the base of the cylinder 
$\C = \Omega \times (0,\infty)$, where the latter contains a slightly incompressible fluid. 
We denote by $\ue$ the negative pressure of the fluid  and by $\Psi$ the negative 
osmotic pressure. In other words, the flow across the membrane occurs only when $\ue = \Psi$ 
on $\Omega \times \{0\}$ and there is no flow if $\ue < \Psi$ on $\Omega \times \{0\}$. 
In case the (average) pressure in $\C$ has an impact on the pressure of a domain $\C_{\mathrm{out}}$ which contains a certain solution, and where 
$\C_{\mathrm{out}}$ is such that $\C_{\mathrm{out}}\cap \C = \emptyset$ and $\partial\C \cap \partial\C_{\mathrm{out}} = \Omega$, then $\Psi$ is a function of $\ue$ within $\C$. Such mechanisms are usually in place on biological systems where homeostasis is of the utmost importance. 
In particular, at equilibrium, this implies that $\ue\leq \Psi(\ue)$ on $\Omega\times \{0\}$, 
and 
\begin{equation*}
\int_{\C}  \nabla \ue \cdot \nabla (\ve-\ue)\geq 0,
\end{equation*}
for all $\ve$ such that $\ve \leq \Psi(\ue)$ on $\Omega\times \{0\}$ which can be equivalently formulated as \eqref{fractional:QVI} when $s=1/2$ by an analogous result to Lemma \ref{lemma:bijections} for unbounded domains.

We emphasize that \eqref{fractional:QVI} is nonlocal and many of the classical techniques dealing with QVIs are not applicable. Indeed, existence of solutions for QVIs involve, in general, ordering properties of the associated monotone operator, in this case $\mathcal{L}^s$, and/or compactness properties of the obstacle map $\Psi$. Even though, it does not hold $\left\langle \mathcal{L}^s \usf^+, \usf^-\right\rangle_{-s,s}= 0$ for each $\usf\in \Hs$ and $s\neq 1$, it is  available that $\left\langle \mathcal{L}^s \usf^+, \usf^-\right\rangle_{-s,s}\leq 0$ for all $\usf\in \Hs$: By equation (1.3) in \cite{MR3489634} (see also \cite{SoVo} where this result was first shown using probabilistic arguments in smooth domains) we have
\begin{align*}
\left\langle \mathcal{L}^s \usf^+, \usf^-\right\rangle_{-s,s}&=\int_{\Omega}\int_{\Omega} (\usf^+(x)-\usf^+(z))(\usf^-(x)-\usf^-(z))K_s(x,z)\mathrm{d}x\mathrm{d}z\\
&=-\int_{\Omega}\int_{\Omega} (\usf^+(x)\usf^-(z)+\usf^+(z)\usf^-(x))K_s(x,z)\mathrm{d}x\mathrm{d}z\leq 0,
\end{align*}
since $K_s(x,z)\geq 0$. Hence, it is possible to pursue the proof of existence of solutions based on the property above. However, since we are interested in creating a numerical method to solve the original QVI of interest, we will exploit the analogous property on an extended domain by invoking the so-called Caffarelli-Silvestre or Stinga-Torrea extension. 

The extension idea was introduced by Caffarelli and Silvestre in $\mathbb{R}^n$ \cite{LCaffarelli_LSilvestre_2007a} 
and its extension to bounded domains is given in 
\cite{ACapella_JDavila_LDupaigne_YSire_2011a, PRStinga_JLTorrea_2010a}. We refer to the extension in bounded domains as the Stinga-Torrea extension. This idea was applied to the fractional obstacle problem in \cite{caffarelli2008regularity, MR3100955} of both elliptic and parabolic type. In a nutshell, the Caffarelli-Silvestre extension says that fractional powers of the spatial operator $\mathcal{L}$ can be realized as an operator that maps a Dirichlet boundary condition to a Neumann condition via an extension problem on the semi-infinite cylinder $\C = \Omega \times (0,\infty)$. 

Related to the nonlocal QVI given in \eqref{fractional:QVI}, we introduce the following \emph{extended QVI} problem which is local in nature and includes one extra spatial dimension $y$:
\begin{equation}\label{local:QVI}\tag{P}
\text{Find }\ue \in \mathcal{K}(\ue): a(\ue ,\ue-\ve)  \le  \langle \fsf , \tr (\ue-\ve) \rangle_{-s ,s}, \quad \forall \ve \in \mathcal{K}(\ue),
\end{equation}
where $\we\mapsto\mathcal{K}(\we)$ is defined as
\begin{equation}\label{Kset}
\mathcal{K}(\we) = \{ \ve \in \HL(y^{\alpha},\C) \; | \; \tr \ve \le \Psi(\tr \we) \quad \text{a.e. in } \Omega\},
\end{equation}
where $ \HL(y^{\alpha},\C)$ and $ \tr $ are defined in section \ref{s:alphaHext} and the bilinear form $a$ is given by 
\begin{equation}\label{eq:bilinearform}
 a(\we,\ve):=  \frac{1}{d_s} \int_{\C} {y^{\alpha}\mathbf{A}(x,y)} \nabla \we \cdot \nabla \ve
 + y^{\alpha} c(x) \we \ve , 
\end{equation}
for $\we, \ve \in \HL(y^{\alpha},\C) $ with 
$\alpha = 1-2s \in (-1,1)$, and $d_s = 2^{\alpha}\Gamma(1-s)/\Gamma(s)$. 
Moreover, $\mathbf{A}(x,y) =  \textrm{diag}
\{A(x),1\}$. We will call $y$, the \emph{extended
variable}, and the dimension $n+1$ in $\R_+^{n+1}$, the \emph{extended dimension}
of problem \eqref{local:QVI}. 
We expect that $\usf$ solving 
\eqref{fractional:QVI} fulfills $\usf = \ue|_{\Omega \times \{ 0 \}}$, where $\ue$ solves \eqref{local:QVI}; further, in Lemma~\ref{lemma:bijections} we  prove that the solution set of \eqref{fractional:QVI} and the one of \eqref{local:QVI} have the same cardinality. The result of Lemma~\ref{lemma:bijections} is in accordance with \cite{LCaffarelli_LSilvestre_2007a,ACapella_JDavila_LDupaigne_YSire_2011a,PRStinga_JLTorrea_2010a} but requires extra care and does not follow immediately from these well-known papers. However, it has serious consequences: It allows us to transfer the well-posedness of \eqref{local:QVI} to the seemingly intractable \eqref{fractional:QVI}. This initiates a new paradigm in the field of QVIs.

The paper is organized as follows: The material in Section~\ref{sec:Prelim} is well-known and is provided only so that the paper is self-contained. In Section~\ref{sub:notation} we set up the notation and define the fractional powers of $\mathcal{L}$ based on spectral theory. We supplement this definition with fractional Sobolev spaces. In Section~\ref{s:alphaHext} we state the Stinga-Torrea extension. In Section~\ref{s:Linf} we state the $L^\infty$-regularity result for solution to the linear problem. Our main work starts from Section~\ref{s:FQVIvsQVI} where we first state a general result, Lemma~\ref{lemma:bijections}, which allows us to establish the relation between \eqref{fractional:QVI} and \eqref{local:QVI}. With the help of this result, in conjunction with Assumption~\ref{ass1} $(i)$, we show the existence of solutions to \eqref{fractional:QVI} and \eqref{local:QVI}  in Theorem~\ref{thm:P}.  The Assumption~\ref{ass1} $(ii)$, in addition, leads to uniqueness of solutions to these problems. Since we are interested in developing a numerical method to solve \eqref{local:QVI}, owing to the fact that $\C$ is unbounded, in Section~\ref{tQVI} we propose a truncated problem on a bounded domain $\C_\tau = \Omega \times (0,\tau)$ with $\tau < \infty$. In Theorem~\ref{thm:convergence} we prove the convergence of truncated solutions to $\ue$, solving \eqref{local:QVI}, as $\tau \rightarrow \infty$ under fairly mild assumptions. Such a result is made possible because of our Mosco convergence result in Lemma~\ref{lemma:Mosco}. In Section~\ref{s:algo} we develop an algorithm in function space to solve the truncated problem. We prove the convergence of this algorithm in Theorem~\ref{thm:algoconv}. Finally, we conclude with several illustrative examples in Section~\ref{s:numerics}.

\section{Notation and preliminaries}
\label{sec:Prelim}

To some extent, in this section, we will use the notation from \cite{HAntil_JPfefferer_MWarma_2016a}.

\subsection{Spectral Fractional Operator}
\label{sub:notation}

Let $\Omega$ be an open, bounded and connected domain of $\R^n$, $n\geq1$, with Lipschitz boundary $\partial\Omega$. For any $s\geq0$, we introduce the following fractional order Sobolev space
\begin{align*}
\mathbb H^s(\Omega):=\left\{\usf=\sum_{k=1}^\infty \usf_k\varphi_k\in L^2(\Omega):\;\;\|\usf\|_{\mathbb H^s(\Omega)}^2:=\sum_{k=1}^\infty \lambda_k^s\usf_k^2<\infty\right\},
\end{align*}
where $\lambda_k$ are the eigenvalues of $\mathcal{L}$ with associated normalized 
(in $L^2(\Omega)$) eigenfunctions $\varphi_k$ and
\begin{align*} 
\usf_k=(\usf,\varphi_k)_{L^2(\Omega)}=\int_{\Omega}\usf\varphi_k\;dx.
\end{align*}

It is well-known that
\begin{equation}\label{inf}
\mathbb H^s(\Omega)=
\begin{cases}
H_0^s(\Omega)=H^s(\Omega)\;\;\;&\mbox{ if }\; 0<s<\frac 12,\\
H_{00}^{\frac 12}(\Omega)\;\;&\mbox{ if }\; s=\frac 12,\\
H_0^s(\Omega)\;\;&\mbox{ if }\; \frac 12<s<1.
\end{cases}
\end{equation}
Here 
\begin{align*}
H_0^s(\Omega):=\overline{\mathcal D(\Omega)}^{H^s(\Omega)},
\end{align*}
where $\mathcal D(\Omega)$ denotes the space of infinitely continuously differentiable functions with compact support in $\Omega$, and
\begin{align*}
H_{00}^{\frac 12}(\Omega):=\left\{u\in 
H^{\frac 12}(\Omega):\;\int_{\Omega}\frac{u^2(x)}{\mbox{dist}(x,\partial\Omega)}\;dx<\infty\right\} ,
\end{align*}
is the so-called Lions-Magenes space \cite{Tartar} with norm
\[
\|u\|_{H_{00}^{\frac 
12}(\Omega)}=\left(\|u\|_{H^{\frac12}(\Omega)}^2+\int_{\Omega}\frac{u^2(x)}{\mbox{dist}(x,\partial\Omega)}\;dx\right)^{\frac
	12}.
\]
We denote by $\mathbb{H}^{-s}(\Omega)$ the dual of $\mathbb{H}^s(\Omega)$ and by $\langle \cdot,\cdot \rangle_{-s,s}$ we denote the duality pairing between $\mathbb{H}^{-s}(\Omega)$ and $\mathbb{H}^{s}(\Omega)$. We next define the fractional powers of $\mathcal{L}$ (cf.~\cite{MR3489634}).

\begin{definition}
The spectral fractional operator $\mathcal{L}^s$ is defined on the space $C_0^\infty(\Omega)$ by
\begin{align}\label{def:second_frac}
\mathcal{L}^s\usf=\sum_{k=1}^\infty\lambda_k^s \usf_k\varphi_k\qquad \text{ with  } \usf_k=\int_{\Omega}\usf\varphi_k.
\end{align}
\end{definition}
By density, the operator $\mathcal{L}^s$ extends to an operator mapping from $\mathbb{H}^s(\Omega)$ to $\mathbb{H}^{-s}(\Omega)$.

\subsection{$\alpha$-Harmonic Extension}
\label{s:alphaHext}

The extension approach in $\mathbb{R}^n$ is due to Caffarelli and Silvestre and its restriction to bounded domains was given by Stinga and Torrea. The key property of the extension is that it \emph{localizes} the nonlocal operator $\mathcal{L}^{ s}$ at the expense of an extra spatial dimension. 
Before we discuss it, we introduce some notation. We denote by $\C := \Omega \times (0,\infty)$ the semi-infinite cylinder with lateral boundary $\partial_L \C := \partial\Omega \times [0,\infty)$. We let $\tau > 0$ denote a
truncation of cylinder $\C$ to $\C_\tau := \Omega \times [0,\tau]$ {and define $\partial_L \C_\tau := (\partial\Omega \times [0,\tau])\cup(\Omega\times \{\tau\})$}. Notice that both $\C$ and $\C_\tau$
are the objects in $\mathbb{R}^{n+1}$. Furthermore, we let $y$ denote the extended variable 
such that a vector $x' \in \mathbb{R}^{n+1}$ admits the following representation: 
$x' = (x_1,\dots,x_n,x_{n+1}) = (x,x_{n+1}) = (x,y)$ with $x_i \in \mathbb{R}$ for $i=1,\dots,n$, $x \in \mathbb{R}^n$ and $y\in \mathbb{R}$. 

Let $\mathcal{D} \subset \mathbb{R}^{n+1}$ be an open set, such as $\C$ or $\C_\tau$. Next we define the weighted spaces with weight function $\tau^\alpha$ with $\alpha \in (-1,1)$. These weighted spaces are necessary to tackle the singular/degenerate nature of the extended problem. We refer to \cite[Section~2.1]{Turesson}, \cite{KO84} and \cite[Theorem~1]{GU} for a more detailed discussion on such spaces. We denote by $L^2(\tau^\alpha,\D)$ to the space of measurable functions defined on $\D$ with finite norm $\|\we\|_{L^2(\tau^\alpha,\D)} := \|\tau^{\frac{\alpha}{2}}\we\|_{L^2(\D)}$. 
Further, let $H^1(\tau^\alpha,\D)$ denote the space of measurable functions 
{$\we \in L^2(\tau^\alpha,\D)$ with weak gradients $\nabla \we$ in $  L^2(\tau^\alpha,\D)$}, and endowed with the norm
\[
 \|\we\|_{H^1(\tau^\alpha,\D)} := \left( \|\we\|_{L^2(\tau^\alpha,\D)}^2 + \|\nabla \we\|_{L^2(\tau^\alpha,\D)}^2 \right)^{\frac12}
  . 
\]
We are now ready to define the Sobolev space on $\C$ that is of interest to us
\[
 \mathring{H}^1_L(\tau^\alpha,\C) 
  := \{\we \in H^1(\tau^\alpha,\C) \; | \; \we = 0 \mbox{ on } \partial_L \C \} .
\]
The space $\mathring{H}^1_L(\tau^\alpha,\C_\tau)$ is defined in a similar manner. We will 
denote the trace of a function on $\Omega$ by $\tr$.

Consider a function $\usf : \Omega \rightarrow \mathbb{R}$. We then define an $\alpha$-harmonic extension of $\usf$ (cf. \cite{LCaffarelli_LSilvestre_2007a, PRStinga_JLTorrea_2010a}) to the cylinder $\C$, as the function $\ue$ that solves 
\begin{equation}
\label{alpha_harm_Lyu}
\begin{dcases}
  -\DIV \left( y^{\alpha} \mathbf{A} \nabla \ue \right) + y^{\alpha} c\ue = 0 & \textrm{in } \C,\\
  \ue = 0 \quad  \text{on } \partial_L \C, &
  \ue = \usf \quad \text{on } \Omega \times \{0\}. \\
\end{dcases}
\end{equation}
Given  $\usf\in \Hs$, this problem has a unique solution $\ue \in \HL(y^\alpha,\C)$; in fact,  $\ue \in \HL(y^\alpha,\C)$ solves problem  \eqref{alpha_harm_Lyu} if and only if it solves the minimization problem
\begin{align*}
&\min\int_{\C}  {y^{\alpha}(\nabla \we,\mathbf{A}(x,y)}\nabla \we)
 + y^{\alpha} c(x)|\we|^2 \diff x \diff y\quad \text{ over } \we \in \HL(y^\alpha,\C),\\
&\text{subject to } \tr \we= \usf,
\end{align*}
where the objective functional is coercive, continuous and strictly convex (hence weakly lower semicontinuous). We define the solution mapping $\usf \mapsto \ue$ of \eqref{alpha_harm_Lyu} as $H_\alpha: \Hs\to \HL(y^\alpha,\C)$, i.e., $\ue=H_\alpha \usf$.

Towards this end, the fundamental result of \cite{PRStinga_JLTorrea_2010a}, see also \cite[Lemma~2.2]{ACapella_JDavila_LDupaigne_YSire_2011a}, can be stated as follows:

\begin{theorem}[Stinga--Torrea extension]
\label{TH:CS}
If $s\in(0,1)$, $\usf \in \Hs$, and $\ue$ solves \eqref{alpha_harm_Lyu} then 
\[
  d_s \mathcal{L}^s \usf = \partial_{\nu}^{\alpha} \ue,
\]
in the sense of distributions, where $d_s = 2^{\alpha}\Gamma(1-s)/\Gamma(s)$ and 
$\partial_{\nu}^{\alpha} \ue$ is the 
conormal exterior derivative of $\ue$ at $\Omega \times \{ 0 \}$ given by 
$\partial_{\nu}^{\alpha} \ue = -\lim_{y \rightarrow 0^+} y^\alpha \ue_y,$ 
where the limit must be understood in the distributional sense. 
\end{theorem}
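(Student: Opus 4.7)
The plan is to reduce the problem to one-dimensional Bessel-type ODEs via separation of variables in the spectral basis $\{\varphi_k\}$, then read off the conormal derivative from the known asymptotics of modified Bessel functions at the origin.

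First I would expand $\usf = \sum_k \usf_k \varphi_k$ in the spectral basis of $\mathcal{L}$ and seek the extension in the form $\ue(x,y) = \sum_k \usf_k \varphi_k(x)\phi_k(y)$. Substituting this ansatz into \eqref{alpha_harm_Lyu} and using $\mathcal{L}\varphi_k = \lambda_k \varphi_k$ reduces the PDE to the family of ODEs
\begin{equation*}
\phi_k''(y) + \frac{\alpha}{y}\phi_k'(y) - \lambda_k \phi_k(y) = 0, \qquad \phi_k(0)=1, \quad \phi_k(\infty) = 0,
\end{equation*}
one for each $k$. Through the substitution $\phi_k(y) = y^s \psi_k(\sqrt{\lambda_k}\,y)$, this becomes a modified Bessel equation of order $s$, and the unique decaying solution (normalized at $y=0$) is $\phi_k(y) = c_s\,(\sqrt{\lambda_k}\,y)^s K_s(\sqrt{\lambda_k}\,y)$ for an explicit constant $c_s = 2^{1-s}/\Gamma(s)$, where $K_s$ is the modified Bessel function of the second kind.

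Next I would compute the conormal derivative termwise. Using the standard asymptotics $K_s(z) \sim \tfrac{1}{2}\Gamma(s)(z/2)^{-s}$ as $z\to 0^+$ together with the recurrence $(z^s K_s(z))' = -z^s K_{s-1}(z)$, a direct calculation gives
\begin{equation*}
-\lim_{y\to 0^+} y^\alpha \phi_k'(y) = d_s \,\lambda_k^s,
\end{equation*}
with $d_s = 2^\alpha \Gamma(1-s)/\Gamma(s)$ as stated. Summing formally over $k$ yields $-\lim_{y\to 0^+} y^\alpha \ue_y = d_s \sum_k \lambda_k^s \usf_k \varphi_k = d_s \mathcal{L}^s \usf$, which is the desired identity.

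The main obstacle, and the reason the theorem is stated distributionally, is making the termwise passage to the limit rigorous: for general $\usf\in \mathbb{H}^s(\Omega)$ the pointwise limit $\lim_{y\to 0^+} y^\alpha \ue_y$ need not exist in $L^2(\Omega)$. I would handle this by testing against $\zeta \in \mathcal{D}(\Omega) \subset \mathbb{H}^s(\Omega)$, expanding $\zeta = \sum_k \zeta_k \varphi_k$ with rapidly decaying coefficients, and using dominated convergence in $\ell^1$ applied to $\sum_k \usf_k \zeta_k \, y^\alpha \phi_k'(y)$, where the uniform majorant $|y^\alpha \phi_k'(y)| \le C d_s \lambda_k^s$ follows from the monotonicity properties of $y\mapsto -y^\alpha\phi_k'(y)$ (equivalently, from the explicit Bessel representation). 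This gives $\langle \partial_\nu^\alpha \ue, \zeta\rangle = d_s \langle \mathcal{L}^s \usf, \zeta\rangle_{-s,s}$ for every test function, which is exactly the distributional identity in the statement.
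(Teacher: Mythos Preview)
The paper does not prove this theorem. It is stated as a background result, attributed to Stinga--Torrea \cite{PRStinga_JLTorrea_2010a} (with the bounded-domain version also in \cite[Lemma~2.2]{ACapella_JDavila_LDupaigne_YSire_2011a}), and no argument is given beyond the citation. So there is no ``paper's own proof'' to compare against.

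That said, your sketch is essentially the standard proof found in those references: spectral decomposition in the basis $\{\varphi_k\}$, reduction to the Bessel-type ODE $\phi_k''+\tfrac{\alpha}{y}\phi_k'-\lambda_k\phi_k=0$, identification of the decaying solution via $K_s$, and termwise computation of the conormal limit using the small-argument asymptotics of $K_s$ and $K_{1-s}$. The constants check out, and your distributional justification via testing against $\zeta\in\mathcal{D}(\Omega)$ and dominated convergence in $\ell^1$ is the right way to make the limit rigorous for general $\usf\in\mathbb{H}^s(\Omega)$. One small point worth tightening: the uniform bound $|y^\alpha\phi_k'(y)|\le C\lambda_k^s$ that you invoke as a majorant should be justified carefully---it follows, for instance, from the integral representation $-y^\alpha\phi_k'(y)=\lambda_k\int_y^\infty t^\alpha\phi_k(t)\,dt$ (obtained by integrating the ODE) together with $0\le\phi_k\le 1$, or directly from monotonicity of $z\mapsto z^s K_{1-s}(z)$ near the origin. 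With that in place your argument is complete and matches the literature.
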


Note that the above result for $\Omega\equiv \mathbb{R}^n$ was obtained by Caffarelli and Silvestre in \cite{LCaffarelli_LSilvestre_2007a}. In particular, the Stinga-Torrea extension entails (see \cite[Theorem 1.1]{PRStinga_JLTorrea_2010a} and \cite[Lemma~2.2] {ACapella_JDavila_LDupaigne_YSire_2011a}) that
\begin{equation}\label{eq:extension}
\langle \mathcal{L}^s \usf, \tr \we \rangle_{-s,s}= a(H_\alpha \usf,\we), \quad \forall \we\in \HL(y^\alpha,\C),
\end{equation}
for $a(\cdot,\cdot)$ defined in \eqref{eq:bilinearform}, and where $H_\alpha \usf$ 
denotes the $\alpha$-Harmonic Extension of $\usf$ as defined in the previous paragraphs.

\subsection{Boundedness of the solution to the linear problem}\label{s:Linf}

In what follows we need that the solution to the following linear problem is essentially bounded: Find $\usf \in \Hs$ such that
\begin{align}\label{eq:lin}
\begin{aligned}
\mathcal{L}^s \usf &= \fsf \quad \mbox{in } \Omega . 
\end{aligned}
\end{align}

The $L^\infty(\Omega)$ characterization of the solution of the above problem is expected in several settings. We state the following result that can be found in \cite{HAntil_MWarma_2017a} (see also
\cite{HAntil_JPfefferer_MWarma_2016a}). 

\begin{theorem}[Lipschitz domains]\label{theo-bound-1}
Let 
$\Omega$ be Lipschitz and $\fsf \in L^p(\Omega)$ with 
\begin{equation*}
\begin{array}{ll}
 p >\frac{n}{2s} & \mbox{if } n > 2s , \\
 p > 1 & \mbox{if } n = 2s , \\
 p = 1 & \mbox{if } n < 2s ,
\end{array}
\end{equation*}
$0 \le c \in L^\infty(\Omega)$, and denote by $\usf$ to the solution of \eqref{eq:lin}. Then $\usf \in L^\infty(\Omega)$ and there 
exists a constant $C = C(n,s,p,\Omega)$ such that 
$\|\usf\|_{L^\infty(\Omega)} \le C \|g\|_{L^p(\Omega)}$. 
\end{theorem}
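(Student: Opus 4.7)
The plan is to run a Stampacchia truncation argument on the extended (local) problem rather than on the nonlocal equation \eqref{eq:lin} itself. Set $\ue=H_\alpha\usf\in\HL(y^\alpha,\C)$, so by \eqref{eq:extension}
\[
 a(\ue,\ve)=\langle\fsf,\tr\ve\rangle_{-s,s}\qquad\forall\,\ve\in\HL(y^\alpha,\C).
\]
Since $\alpha\in(-1,1)$, the weight $y^\alpha$ is a Muckenhoupt $A_2$ weight, so the truncation $(\ue-k)^+$ belongs to $\HL(y^\alpha,\C)$ for every $k\geq 0$ (its lateral trace still vanishes since $k\geq 0$) and is admissible as a test function. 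Testing with it, using coercivity of $a$ together with $c\geq 0$ and $k\geq 0$ to dominate $a(\ue,(\ue-k)^+)\geq a((\ue-k)^+,(\ue-k)^+)$, and then invoking the weighted trace inequality $\|\tr\ve\|_{\Hs}\leq C\|\ve\|_{\HL(y^\alpha,\C)}$, I arrive at the fundamental energy estimate
\[
\|(\usf-k)^+\|_{\Hs}^{\,2}\;\leq\;C\int_{A(k)}\fsf\,(\usf-k)^+\diff x,\qquad A(k):=\{\usf>k\}.
\]

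For the main regime $n>2s$, the fractional Sobolev embedding $\Hs\hookrightarrow L^{q}(\Omega)$ with $q=2n/(n-2s)$ and H\"older's inequality on the right-hand side yield
\[
\|(\usf-k)^+\|_{L^{q}(\Omega)}\;\leq\;C\|\fsf\|_{L^{p}(\Omega)}\,|A(k)|^{\,1-\frac{1}{p}-\frac{1}{q}},
\]
and a short arithmetic check shows that the exponent $1-\tfrac1p-\tfrac1q$ exceeds $\tfrac1q$ precisely when $p>n/(2s)$, which is the stated hypothesis. Stampacchia's lemma then forces $|A(k)|=0$ for $k\geq k^{*}:=C\|\fsf\|_{L^{p}(\Omega)}$, so $\usf\leq k^{*}$ a.e.; applying the same argument to $-\usf$ (which solves $\mathcal{L}^{s}(-\usf)=-\fsf$) produces the matching lower bound. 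The critical case $n=2s$ is handled by replacing $q$ by an arbitrary finite exponent via $\Hs\hookrightarrow L^{r}(\Omega)$ for all $r<\infty$, and a standard Stampacchia iteration absorbs the logarithmic loss under the mild hypothesis $p>1$. The supercritical case $n<2s$ is immediate: $\Hs\hookrightarrow L^{\infty}(\Omega)$ and the energy identity $\|\usf\|_{\Hs}^{2}=\langle\fsf,\usf\rangle_{-s,s}\leq\|\fsf\|_{L^{1}}\|\usf\|_{L^{\infty}}$ give the bound with no iteration.

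The principal technical obstacle lies in the rigorous execution of the truncation within the \emph{weighted} space $H^{1}(y^\alpha,\C)$ on a Lipschitz domain: one needs the chain rule $\nabla(\ue-k)^{+}=\chi_{\{\ue>k\}}\nabla\ue$ in the $y^\alpha$-a.e.\ sense, and a weighted trace constant that is independent of $k$, across the full range $\alpha\in(-1,1)$. Both facts follow from $y^\alpha\in A_{2}$ together with standard weighted Sobolev theory, but they are exactly the delicate ingredients carefully worked out in \cite{HAntil_JPfefferer_MWarma_2016a,HAntil_MWarma_2017a}. Once these weighted tools are in place, the Stampacchia step above is entirely classical.
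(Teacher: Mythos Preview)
The paper does not actually prove this theorem: it is stated in Section~\ref{s:Linf} purely as a preliminary result, with the sentence ``We state the following result that can be found in \cite{HAntil_MWarma_2017a} (see also \cite{HAntil_JPfefferer_MWarma_2016a}),'' and no argument is given. So there is no ``paper's own proof'' to compare your proposal against.

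That said, your outline is a sound sketch of the standard Stampacchia/De~Giorgi truncation argument transported to the extension problem, and it is essentially the strategy pursued in the references the paper cites. The chain of inequalities
\[
\|(\usf-k)^+\|_{\Hs}^{2}\;\lesssim\;\|(\ue-k)^{+}\|_{\HL(y^\alpha,\C)}^{2}\;\lesssim\;a\bigl((\ue-k)^{+},(\ue-k)^{+}\bigr)\;\leq\;a\bigl(\ue,(\ue-k)^{+}\bigr)=\int_{A(k)}\fsf\,(\usf-k)^{+}
\]
is correct once one knows that (i) $(\ue-k)^{+}\in\HL(y^\alpha,\C)$ with the usual chain rule for the gradient, and (ii) $\tr\bigl((\ue-k)^{+}\bigr)=(\tr\ue-k)^{+}$. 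Both hold because $y^\alpha\in A_2$, but (ii) in particular deserves an explicit word, since commuting the trace with a Lipschitz truncation is not entirely formal in weighted spaces. You have, in any case, correctly flagged that these weighted-space facts are the genuine technical content and are supplied by \cite{HAntil_JPfefferer_MWarma_2016a,HAntil_MWarma_2017a}. One small cosmetic point: the critical case $n=2s$ can only occur for $n=1$, $s=1/2$, so the ``arbitrary finite exponent'' embedding you invoke is just $H^{1/2}_{00}(0,1)\hookrightarrow L^r(0,1)$ for every $r<\infty$.
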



For the paper remainder, we will assume that the conditions of Theorem~\ref{theo-bound-1} 
hold true,  i.e., the solution to \eqref{eq:lin} belongs to $L^\infty(\Omega)$.

\section{Solutions to \eqref{fractional:QVI} and \eqref{local:QVI}}\label{s:FQVIvsQVI}

In this section we address the existence and uniqueness of solutions to the QVIs determined by \eqref{fractional:QVI} and \eqref{local:QVI}, and the relationship between their solution sets. 
 As mentioned before, existence and uniqueness of solutions for QVIs involve, in general, ordering properties of the associated monotone operator and/or compactness of the obstacle map $\Psi$.  
 Before considering existence of solutions, we first study the relationship between solution set of \emph{fractional QVI} in \eqref{fractional:QVI} and \emph{extended QVI} in \eqref{local:QVI} (in case they exist).
\begin{lemma}\label{lemma:bijections}
Let $\mathbf{S}_{\mathbb{P}}$ and $\mathbf{S}_P$ denote the set of solutions to \eqref{fractional:QVI} and \eqref{local:QVI}, respectively. Then, the maps
\begin{equation*}
\tr:\mathbf{S}_P\to \mathbf{S}_{\mathbb{P}} \qquad\text{and}\qquad H_\alpha:\mathbf{S}_{\mathbb{P}}\to \mathbf{S}_P, 
\end{equation*}
are bijections.

\end{lemma}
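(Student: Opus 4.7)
The plan is to show that $\tr$ and $H_\alpha$, when restricted to the respective solution sets, are mutually inverse bijections. The pivotal observation is that every solution of the extended QVI coincides with the $\alpha$-harmonic extension of its own trace; once this identification is in place, the localization identity \eqref{eq:extension} immediately converts the variational inequality on $\C$ into the one on $\Omega$ and vice versa.

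For $H_\alpha(\mathbf{S}_{\mathbb{P}})\subset \mathbf{S}_P$, I take $\usf\in\mathbf{S}_{\mathbb{P}}$, set $\ue:=H_\alpha\usf$, and note that $\tr\ue=\usf\le\Psi(\usf)=\Psi(\tr\ue)$ gives $\ue\in\mathcal{K}(\ue)$. For any test $\ve\in\mathcal{K}(\ue)$, the trace $\tr\ve$ lies in $K(\usf)$, so \eqref{fractional:QVI} applied with $\vsf=\tr\ve$ together with \eqref{eq:extension} evaluated at $\we=\ue-\ve$ gives
\begin{equation*}
a(\ue,\ue-\ve) \;=\; \langle \mathcal{L}^s\usf,\tr(\ue-\ve)\rangle_{-s,s} \;\le\; \langle \fsf,\tr(\ue-\ve)\rangle_{-s,s},
\end{equation*}
so $\ue\in\mathbf{S}_P$.

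The reverse direction hinges on the identification step, which I view as the main obstacle. Given $\ue\in\mathbf{S}_P$ with $\usf:=\tr\ue$, I observe that for every $\phi\in\HL(y^\alpha,\C)$ with $\tr\phi=0$, both $\ue\pm\phi$ lie in $\mathcal{K}(\ue)$ because their traces equal $\usf\le\Psi(\usf)$. Testing \eqref{local:QVI} against each choice yields $\pm a(\ue,\phi)\le 0$, hence $a(\ue,\phi)=0$ for all trace-zero $\phi$, which is precisely the weak formulation of \eqref{alpha_harm_Lyu}; uniqueness therefore forces $\ue=H_\alpha\usf$. Now for any $\vsf\in K(\usf)$ the function $\ve:=H_\alpha\vsf$ satisfies $\tr\ve=\vsf\le\Psi(\usf)=\Psi(\tr\ue)$, so $\ve\in\mathcal{K}(\ue)$, and combining \eqref{local:QVI} with \eqref{eq:extension} delivers
\begin{equation*}
\langle \mathcal{L}^s\usf,\usf-\vsf\rangle_{-s,s} \;=\; a(\ue,\ue-\ve) \;\le\; \langle \fsf,\usf-\vsf\rangle_{-s,s},
\end{equation*}
proving $\usf=\tr\ue\in\mathbf{S}_{\mathbb{P}}$.

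Bijectivity then follows because the two assignments are mutually inverse on the solution sets: $\tr\circ H_\alpha$ is the identity on $\mathbf{S}_{\mathbb{P}}$ by the boundary condition in \eqref{alpha_harm_Lyu}, while $H_\alpha\circ\tr$ is the identity on $\mathbf{S}_P$ by the harmonicity identification just obtained. What makes this harmonicity step the one subtle point is that the obstacle constraint defining $\mathcal{K}(\ue)$ involves only the trace of the test function, which allows arbitrary trace-zero perturbations to remain admissible in both directions and thereby upgrades the one-sided inequality in \eqref{local:QVI} to an equality in the interior; without this feature the result of \cite{LCaffarelli_LSilvestre_2007a,ACapella_JDavila_LDupaigne_YSire_2011a,PRStinga_JLTorrea_2010a} cannot be applied directly to the solutions of \eqref{local:QVI}.
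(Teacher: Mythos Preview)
Your proof is correct and follows essentially the same approach as the paper: both hinge on the harmonicity identification $\ue=H_\alpha(\tr\ue)$ obtained by testing \eqref{local:QVI} with trace-zero perturbations, and both then use \eqref{eq:extension} to transfer the variational inequalities in each direction. The only cosmetic differences are that the paper invokes surjectivity of $\tr:\mathcal{K}(\ue)\to K(\tr\ue)$ where you explicitly lift $\vsf$ via $H_\alpha\vsf$, and that your mutual-inverse packaging of the bijectivity conclusion is slightly cleaner than the paper's separate injectivity/surjectivity checks.
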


\begin{proof}
Suppose that $\usf\in \Hs$ solves \eqref{fractional:QVI} and let $\ue$ be its canonical extension, i.e., $\ue:=H_\alpha \usf$. By definition of the bilinear form $a(\cdot, \cdot)$ and the extension result \eqref{eq:extension}, we observe that for any $\ve\in \mathcal{K}(\ue)$
\begin{align*}
a(\ue, \ve-\ue)&=\langle  \mathcal{L}^s \usf, \tr (\ve-\ue) \rangle_{-s,s}=\langle  \mathcal{L}^s \usf, \tr \ve-\usf \rangle_{-s,s}\geq \langle   \fsf, \tr \ve-\usf \rangle_{-s,s}\\
&= \langle   \fsf, \tr (\ve-\ue) \rangle_{-s,s}
\end{align*}
where we have used that $\tr\ve\in K(\tr \ue)$ and $\tr\ue=\usf$. Since $\ue\in \mathcal{K}(\ue)$ given that $\tr\ue\in K(\tr \ue)$, and $\ve\in \mathcal{K}(\ue)$ is arbitrary, then  $\ue$ solves \eqref{local:QVI}. This shows that $H_\alpha(\mathbf{S}_{\mathbb{P}})\subset \mathbf{S}_P$, and the injectivity of $H_\alpha$ follows since $H_\alpha(\vsf_1)=H_\alpha(\vsf_2)$ implies that $\vsf_1=\vsf_2$ because $\tr H_\alpha(\vsf_i)=\vsf_i$ with $i=1,2$.

Suppose that $\ue$ solves \eqref{local:QVI}. We first prove that $\ue=H_\alpha(\tr \ue)$, i.e., the solution to   \eqref{local:QVI} is identical to the canonical extension of its trace. Consider $\mathcal{R}\in \HL(y^\alpha,\C)$ and $\tr \mathcal{R}=0$ a.e. in $\Omega$, then $\ve:=\ue\pm \mathcal{R}$ satisfies $\ve\in \mathcal{K}(\ue)$. Hence, considering this $\ve$ in \eqref{local:QVI}, we observe that 
\begin{equation*}
a(\ue,\mathcal{R})=0, \qquad \mathcal{R}\in \HL(y^\alpha,\C),\:\: \tr \mathcal{R}=0 \text{ a.e. in }\Omega.
\end{equation*}
That is, $\ue$ solves the problem: Find $\we\in \HL(y^\alpha,\C)$ such that 
\begin{equation*}
\begin{dcases}
  -\DIV \left( y^{\alpha} \mathbf{A} \nabla \we \right) + y^{\alpha} c\we = 0 & \textrm{in } \C,\\
  \we = 0 \quad  \text{on } \partial_L \C, &
  \we = \tr \ue\quad \text{on } \Omega \times \{0\}. \\
\end{dcases}
\end{equation*}
Therefore, $\ue=H_\alpha(\tr \ue)$.
The extension result \eqref{eq:extension} implies  that
\begin{align*}
\langle  \mathcal{L}^s (\tr \ue),  \tr \ve-\tr\ue\rangle_{-s,s}=a(\ue, \ve-\ue)\geq \langle   \fsf, \tr \ve-\tr\ue \rangle_{-s,s}
\end{align*}
for each $\ve\in \mathcal{K}(\ue)$. Since, $\tr: \mathcal{K}(\ue)\to  K(\tr \ue)$ is surjective, then $\tr\ue$ solves \eqref{local:QVI}.  Hence, $\tr(\mathbf{S}_P)\subset  \mathbf{S}_{\mathbb{P}}$ and if $\ve_i\in \mathbf{S}_P$ and $\tr \ve_1=\tr \ve_2$, then $\ve_1=\ve_2$ since we have proven that $\ve_i=H_\alpha \tr \ve_i$ for $i=1,2$. That is, $\tr$ is injective and the surjectivity of the map follows since $\tr H_\alpha \vsf=\vsf$ for any $\vsf\in \mathbf{S}_{\mathbb{P}}$. Finally, the surjectivity of $H_\alpha$ follows as we have proven that if $\ve\in \mathbf{S}_{P}$, then $\ve$ is the canonical extension of its trace $\tr \ve\in \mathbf{S}_{\mathbb{P}}$, so that $H_\alpha (\tr \ve)=\ve$.
\end{proof}



The previous results allows us to study problem \eqref{local:QVI} and subsequently transfer solution properties to \eqref{fractional:QVI}. We start by considering the following assumption on the obstacle map $\Psi$: 
\begin{ass}[first assumption on $\Psi$]\label{ass1}
\begin{itemize}
\item[$(i)$.] If $0\leq \usf_1\leq \usf_2$, then $0\leq \Psi(\usf_1)\leq \Psi(\usf_2)$ a.e. in $\Omega$.
\item[$(ii)$.] For every non-negative $\usf$ and $\zeta\in [0,1)$, there exists $\beta\in (\zeta,1)$ such that $\Psi(\zeta \usf)\geq \beta \Psi(\usf)$ a.e. in $\Omega$.
\end{itemize}
\end{ass}

We refer to $(i)$ in Assumption~\ref{ass1} as the non-decreasing property of $\Psi$. This will be used to show existence of solutions. On the other hand, $(ii)$ in Assumption~\ref{ass1} will be used to show uniqueness (cf.~\cite{Laetsch1975286} where this property was used for the first time). Unless otherwise stated, we shall not use $(ii)$, however, $(i)$ in Assumption~\ref{ass1} is assumed to hold true for the remainder of the paper.

Both items, $(i)$ and $(ii)$, in Assumption~\ref{ass1} are satisfied by the map
\begin{equation}\label{eq:icPsi}
\Psi(\usf)(x):=\nu+\inf_{\xi\geq 0,\: x+\xi\in \Omega}\usf(x+\xi),
\end{equation}
for some $\nu\geq0$ with $\xi:=\{\xi_i\}$ and where $\xi\geq 0$ means $\xi_i\geq0$ for $i=1,2,\ldots, n$. This map arises in optimal impulse control problems.

%
%


We are now in position to present an existence and uniqueness result.

\begin{theorem}\label{thm:P} 
The set of solutions $\mathbf{S}_P$ of \eqref{local:QVI} is non-empty, it satisfies $\tr \mathbf{S}_P\subset L^\infty(\Omega)$ and if $\ue\in \mathbf{S}_P$ then
\begin{equation*}
0\leq \tr \ue \leq \usf^*, \qquad \text{ a.e. in } \Omega
\end{equation*}
where $\usf^*$ solves (weakly) the problem: Find $\usf\in \Hs$ such that $\mathcal{L}^s \usf=\fsf$. If in addition to $(i)$ the obstacle map $\Psi$ satisfies $(ii)$ in Assumption \ref{ass1},  then $\mathbf{S}_P$ is a singleton.
\end{theorem}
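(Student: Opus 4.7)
The plan is to use Lemma~\ref{lemma:bijections} to reduce the problem to \eqref{local:QVI} and then implement a monotone fixed-point iteration built on the solution operator of the fixed-obstacle VI. For each $\wsf\in\Hs$ with $0\le\wsf\le\usf^{*}$ a.e., define $T(\wsf):=\tr \mathbf{z}_{\wsf}$, where $\mathbf{z}_{\wsf}\in\HL(y^{\alpha},\C)$ is the unique solution of
\begin{equation*}
\mathbf{z}_{\wsf}\in\tilde{\mathcal{K}}(\wsf):\quad a(\mathbf{z}_{\wsf},\mathbf{z}_{\wsf}-\ve)\le\langle\fsf,\tr(\mathbf{z}_{\wsf}-\ve)\rangle_{-s,s}\quad\forall\,\ve\in\tilde{\mathcal{K}}(\wsf),
\end{equation*}
with $\tilde{\mathcal{K}}(\wsf):=\{\ve\in\HL(y^{\alpha},\C):\tr\ve\le\Psi(\wsf)\text{ a.e.}\}$. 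Stampacchia's theorem applies since $\tilde{\mathcal{K}}(\wsf)$ is closed, convex, and contains $0$ (as $\Psi\ge 0$), and $a$ is continuous and coercive. Every fixed point $\usf=T(\usf)$ yields an element of $\mathbf{S}_{P}$ via $H_{\alpha}$ and, by Lemma~\ref{lemma:bijections}, an element of $\mathbf{S}_{\mathbb{P}}$.

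The bounds $0\le T(\wsf)\le\usf^{*}$ come from two standard comparisons. Since $\Psi(\wsf)\ge 0$, $\mathbf{z}_{\wsf}^{+}\in\tilde{\mathcal{K}}(\wsf)$; testing with $\ve=\mathbf{z}_{\wsf}^{+}$ and using the orthogonal splitting $a(\mathbf{z}_{\wsf}^{+},\mathbf{z}_{\wsf}^{-})=0$ together with $\fsf\ge 0$ forces $\mathbf{z}_{\wsf}^{-}\equiv 0$. For the upper bound, test with $\ve=\mathbf{z}_{\wsf}\wedge H_{\alpha}\usf^{*}\in\tilde{\mathcal{K}}(\wsf)$ and rewrite the right-hand side via~\eqref{eq:extension} applied to $\usf^{*}$; this yields $a((\mathbf{z}_{\wsf}-H_{\alpha}\usf^{*})^{+},(\mathbf{z}_{\wsf}-H_{\alpha}\usf^{*})^{+})\le 0$, so $\mathbf{z}_{\wsf}\le H_{\alpha}\usf^{*}$. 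Theorem~\ref{theo-bound-1} gives $\usf^{*}\in L^{\infty}(\Omega)$, hence $T$ sends the order interval $\{\wsf:0\le\wsf\le\usf^{*}\text{ a.e.}\}$ into itself.

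Next, Assumption~\ref{ass1}(i) yields monotonicity of $T$: if $\wsf_{1}\le\wsf_{2}$ then $\Psi(\wsf_{1})\le\Psi(\wsf_{2})$, and testing the VIs for $\mathbf{z}_{\wsf_{1}}$ and $\mathbf{z}_{\wsf_{2}}$ against $\mathbf{z}_{\wsf_{1}}\wedge\mathbf{z}_{\wsf_{2}}$ and $\mathbf{z}_{\wsf_{1}}\vee\mathbf{z}_{\wsf_{2}}$ (both admissible by the obstacle ordering) and summing gives $\mathbf{z}_{\wsf_{1}}\le\mathbf{z}_{\wsf_{2}}$. Form the iteration $\usf_{0}:=0$, $\usf_{n+1}:=T(\usf_{n})$; it is non-decreasing, bounded above by $\usf^{*}$, and converges pointwise a.e.\ to some $\usf\in L^{\infty}(\Omega)$ with $0\le\usf\le\usf^{*}$. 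A uniform $\HL(y^{\alpha},\C)$-bound on $\mathbf{z}_{n}:=\mathbf{z}_{\usf_{n-1}}$ (via testing with $\ve=0$), combined with the identity $\mathbf{z}_{n}=H_{\alpha}\usf_{n}$ (which arises exactly as in the proof of Lemma~\ref{lemma:bijections}, since test perturbations with zero trace belong to $\tilde{\mathcal{K}}(\usf_{n-1})$), gives $\mathbf{z}_{n}\rightharpoonup\ue:=H_{\alpha}\usf$ weakly. To conclude $\ue\in\mathbf{S}_{P}$, I would invoke a Mosco-type convergence of $\tilde{\mathcal{K}}(\usf_{n-1})$ toward $\mathcal{K}(\ue)$; the nestedness $\tilde{\mathcal{K}}(\usf_{n-1})\subset\mathcal{K}(\ue)$ enforced by (i) reduces the recovery step to approximating an arbitrary $\ve\in\mathcal{K}(\ue)$ by $\ve-H_{\alpha}(\tr\ve-\Psi(\usf_{n-1}))^{+}\in\tilde{\mathcal{K}}(\usf_{n-1})$.

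Uniqueness under Assumption~\ref{ass1}(ii) uses a Laetsch-style argument. Suppose $\usf_{1},\usf_{2}\in\mathbf{S}_{\mathbb{P}}$ are distinct; after possibly swapping roles set $\zeta^{*}:=\sup\{\zeta\in[0,1]:\zeta\usf_{2}\le\usf_{1}\}\in[0,1)$. By (ii) there exists $\beta\in(\zeta^{*},1)$ with $\Psi(\zeta^{*}\usf_{2})\ge\beta\Psi(\usf_{2})\ge\beta\usf_{2}$, so $\zeta^{*}\usf_{2}$ lies strictly below its obstacle by an amount $\ge(\beta-\zeta^{*})\usf_{2}$. Combining this strict gap with $\mathcal{L}^{s}(\zeta^{*}\usf_{2})\le\zeta^{*}\fsf\le\fsf$ (using that $\usf_{2}$ is a QVI solution, $0\le\zeta^{*}<1$, and $\fsf\ge 0$) and the comparison principle for the VI characterizing $\usf_{1}=T(\usf_{1})$, one deduces that $(\zeta^{*}+\varepsilon)\usf_{2}\le\usf_{1}$ for small $\varepsilon>0$, contradicting the maximality of $\zeta^{*}$. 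The principal obstacle I anticipate is this last step: converting the obstacle gap $(\beta-\zeta^{*})\usf_{2}$ into a quantitative improvement of the scaling factor via a comparison argument for $\mathcal{L}^{s}$, whose nonlocality forces one to work in the extended Stinga--Torrea framework rather than relying on a direct pointwise comparison as in the $s=1$ case.
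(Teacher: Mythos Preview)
Your comparison arguments for $0\le T(\wsf)\le\usf^{*}$ and for the monotonicity of $T$ are correct and match the paper's reasoning (the paper carries them out for the extended VI solution map on $\HL(y^\alpha,\C)$ rather than on traces, but this is immaterial in view of Lemma~\ref{lemma:bijections}). The uniqueness sketch under Assumption~\ref{ass1}(ii) is also in line with the paper, which simply invokes the argument of Laetsch.

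The genuine gap is in the existence step. You build the monotone iteration $\usf_{n+1}=T(\usf_n)$ and then want to pass to the limit via Mosco convergence of $\tilde{\mathcal{K}}(\usf_{n-1})$ to $\mathcal{K}(\ue)$. The nestedness $\tilde{\mathcal{K}}(\usf_{n-1})\subset\mathcal{K}(\ue)$ does handle the weak-limit condition, but the recovery condition fails under Assumption~\ref{ass1}(i) alone. Your proposed recovery $\ve_n=\ve-H_\alpha(\tr\ve-\Psi(\usf_{n-1}))^{+}$ has two problems: first, $\Psi(\usf_{n-1})$ is only assumed measurable and nonnegative, so $(\tr\ve-\Psi(\usf_{n-1}))^{+}$ need not lie in $\Hs$ and $H_\alpha$ cannot be applied; second, even granting this, you only know that $\Psi(\usf_{n-1})$ is nondecreasing in $n$, not that $\Psi(\usf_{n-1})\to\Psi(\usf)$ in any topology, because Theorem~\ref{thm:P} does \emph{not} assume any continuity of $\Psi$. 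Consequently $\ve_n\to\ve$ in $\HL(y^\alpha,\C)$ is unjustified, and you cannot conclude that the limit $\ue$ satisfies the QVI with obstacle $\Psi(\tr\ue)$ rather than merely with the (possibly strictly smaller) obstacle $\sup_n\Psi(\usf_{n-1})$.

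The paper sidesteps this entirely: after establishing monotonicity of the VI solution map and exhibiting the sub/super-solution pair $0,\ue^{*}$, it invokes the abstract order-theoretic fixed-point theorem of Tartar (cited from Aubin, Chapter~15, \S15.2, Theorem~3), which yields a fixed point in $[0,\ue^{*}]$ from monotonicity alone, with no continuity of $\Psi$ required. Your iterative scheme combined with Mosco convergence is exactly what the paper employs later (Lemma~\ref{lemma:Mosco} and Theorem~\ref{thm:convergence}), but only \emph{after} Assumption~\ref{ass2} has been imposed; that assumption supplies both the strict lower bound $\Psi\ge\nu>0$ and the $L^\infty$ convergence $\Psi(\usf_n)\to\Psi(\usf)$ needed to build valid recovery sequences (via a scaling $\beta_n\ve$ rather than a truncation of the trace).
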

\color{black}
\begin{proof}
For a given $\fsf\in L^\infty(\Omega)$, and any $\we\in \HL(y^{\alpha},\C)$, let $T(\fsf, \we)$ denote the solution to the variational inequality
\begin{equation}\label{vi}
\text{Find }\ue \in \mathcal{K}(\we): a(\ue ,\ue-\ve)  \le  \langle \fsf , \tr (\ue-\ve) \rangle_{-s ,s}, \quad \forall \ve \in \mathcal{K}(\we).
\end{equation}
Since $a: \HL(y^{\alpha},\C)\times  \HL(y^{\alpha},\C)\to \mathbb{R}$ is bilinear, continuous and coercive, then $T(\fsf, \we)\in  \HL(y^{\alpha},\C)$ is uniquely defined (see \cite{MR1786735}).

Note that if $\ue\in \HL(y^{\alpha},\C)$, then $\ue^+=\max(0,\ue)$ and $ \ue ^-=-\min(0,\ue)$ belong to $\HL(y^{\alpha},\C)$ and also $a(\ue^+, \ue ^-)=0$. Additionally, if  $\ve\leq \we$ a.e., $\ve_0\in \mathcal{K}(\ve)$, and $\we_0\in \mathcal{K}(\we)$, it follows that  $\min(\ve_0,\we_0)\in \mathcal{K}(\ve)$ and $\max(\ve_0,\we_0)\in\mathcal{K}(\we)$, which yields (see \cite[Theorem 5.1, Chapter 4]{rodrigues1987obstacle}) that
\begin{equation*}
 \fsf_1\leq \fsf_2, \:\we_1\leq \we_2 \quad \Longrightarrow \quad T(\fsf_1,\we_1)\leq T(\fsf_2,\we_2),
\end{equation*}
where all inequalities hold in the ``a.e.'' sense. From this, the fact that $\Psi(\usf)\geq 0$ a.e. in $\Omega$ for all $\usf\in \Hs$, and since $\fsf$ is non-negative by initial assumption, we know that for all $\we\in \HL(y^{\alpha},\C)$, it holds that that  $0=T(0,\we)\leq T(\fsf,\we)$ a.e. in $\C$. Furthermore, $T(\fsf,\we)\leq \ue^*$ a.e., where $ \ue^*$ solves the unconstrained version of \eqref{vi}, i.e.,
\begin{equation}\label{pde}
\text{Find }\ue \in \HL(y^{\alpha},\C): a(\ue ,\ve)  =  \langle \fsf , \tr (\ve) \rangle_{-s ,s}, \quad \forall \ve \in \HL(y^{\alpha},\C).
\end{equation}
This latter fact follows since the solutions are monotone with respect to the obstacle: Consider another obstacle map $\tilde{\Psi}$ such that $\Psi(\vsf)\leq \tilde{\Psi}(\vsf)$ a.e. for all $\vsf\in  \Hs$, together with $\tilde{\mathcal{K}}(\cdot)$ defined as \eqref{Kset} but with $\tilde{\Psi}$ instead of $\Psi$. For any $\we$, it follows that if $\ve\in \mathcal{K}(\we)$ and $\tilde{\ve}\in \tilde{\mathcal{K}}(\we)$, then $\min(\ve,\tilde{\ve})\in \mathcal{K}(\we)$ and $\max(\ve,\tilde{\ve})\in\tilde{\mathcal{K}}(\we)$. Define the associated variational inequality solution map $\tilde{T}$ analogous to the map $T$ but where $\mathcal{K}(\cdot)$ is replaced by $\tilde{\mathcal{K}}(\cdot)$. Therefore, we have that $T(\fsf,\ve)\leq \tilde{T}(\fsf,\ve)$ for all $\ve\in \HL(y^{\alpha},\C)$ (see \cite[Theorem 5.1, Chapter 4]{rodrigues1987obstacle}). Hence, for $\tilde{\Psi}\equiv +\infty$, we have $\tilde{T}(\fsf,\ve)=\ue^*$, and the  inequality $T(\fsf,\we)\leq \ue^*$, follows.

The previous paragraph determines that $0$ and $\ue^*$ are sub- and super-solutions of the map $\we\mapsto T(\fsf,\we)$, i.e., $0\leq T(\fsf,0)$ and $T(\fsf,\ue^*)\leq \ue^*$ a.e. in $\C$. Since $\we\mapsto T(\fsf,\we)$ is non-decreasing, this entails that \eqref{local:QVI} admits solutions (see \cite[Chapter 15, Ãâ¬ 15.2, Theorem 3]{aubin1982mathematical}), and for each solution $\ue$, we have $0\leq \ue \leq \ue^*$.

In view of Lemma \ref{lemma:bijections}, we have that $ \tr \ue^*\equiv \usf^*$, where $\usf^*$ solves (weakly) the problem: Find $\usf\in \Hs$ such that  $\mathcal{L}^s \usf=\fsf$. Further, we observe that $\usf^*\in L^\infty(\Omega)$ by the assumptions in Section \ref{s:Linf}.  Finally, since $\tr$ preserves the pointwise order in $\HL(y^{\alpha},\C)$, we have that
\begin{equation}\label{eq:traceLinf}
0\leq \tr \ue \leq \usf^*.
\end{equation}
Hence $ \tr \ue \in L^\infty(\Omega)$ for any solution $\ue$ to \eqref{local:QVI}. 

If in addition to $(i)$, $\Psi$ also satisfies $(ii)$ in Assumption \ref{ass1}, uniqueness of solutions for  \eqref{local:QVI} follows directly by the same arguments as in \cite{Laetsch1975286}.
\end{proof}

Theorem \ref{thm:P} and Lemma \ref{lemma:bijections} amount to the following result:  If the obstacle map $\Psi$ satisfies $(i)$ in Assumption \ref{ass1}. Then, Problems \eqref{fractional:QVI} and \eqref{local:QVI} admit solutions. Moreover, the set of solutions $\mathbf{S}_{\mathbb{P}}$ and $\mathbf{S}_P$ of  \eqref{fractional:QVI} and \eqref{local:QVI}, respectively, have the same cardinality.  If in addition to $(i)$, the obstacle map $\Psi$ satisfies also $(ii)$ in Assumption \ref{ass1}, solutions to  \eqref{fractional:QVI} and \eqref{local:QVI} are unique.

\section{The truncated QVI problem}\label{tQVI} 
The focus of this section is on approximation and numerical methods for problems \eqref{fractional:QVI} and \eqref{local:QVI}. Direct discretization of \eqref{fractional:QVI}, via finite elements, requires dealing with a stiffness matrix $K_{i,j}:=\left\langle \mathcal{L}^s \usf_i, \usf_j\right\rangle_{-s,s}$ which is dense (this can be easily seen by using the equivalent integral representation of $\mathcal{L}^s$ cf.~\cite{MR3489634}), and hence the dimension of the associated discretized problem is bounded by memory limitations (similar situation occurs when we  use the integral definition of fractional operators \cite{antil2016optimal}). In addition, directly using the spectral definition \eqref{def:second_frac} needs access to eigenvalues and eigenvectors of $\mathcal{L}$ which is, again, intractable in general domains. The discretization of problem \eqref{local:QVI} is a more suitable choice for numerical methods. In this case, although the dimension is increased by one, the stiffness matrix $K_{i,j}:=a(\ue_i, \ue_j)$ is sparse. The evident limitation here is that the domain $\C$ associated to \eqref{local:QVI} is not finite. In this vein, we consider a truncation of the domain $\C$, i.e., we define $\C_\tau=\Omega\times (0, \tau)$ and the problem
\begin{equation}\label{trunc:QVI}\tag{$\mathrm{P}_\tau$}
\text{Find }\ue \in \mathcal{K}_\tau(\ue): a_\tau(\ue ,\ue-\ve)  \le  \langle \fsf , \tr (\ue-\ve) \rangle_{-s ,s}, \quad \forall \ve \in \mathcal{K}_\tau(\ue),
\end{equation}
with
\begin{equation*}
\mathcal{K}_\tau(\ve) = \{ \we \in \HL(y^{\alpha},\C_\tau) \; | \; \tr \we \le \Psi(\tr \ve) \quad \text{a.e. in } \Omega\},
\end{equation*}
and where we define $a_\tau(\cdot, \cdot)$ identically as  $a(\cdot, \cdot)$ in \eqref{eq:bilinearform} but where the domain of integration is $\C_\tau$ instead of $\C$.

In this section we study the $\tau$-limiting behavior of solutions to \eqref{trunc:QVI}. We consider an approach that under mild assumptions on the obstacle map guarantees strong convergence of solutions of \eqref{trunc:QVI} to the solution of \eqref{local:QVI}.

Associated to problem \eqref{trunc:QVI} we define the following map. Let $\mathcal{K}$ be a closed, convex and non-empty set in $\HL(y^{\alpha},\C)$ and $\gsf\in\Hsd$, then we define $R(\gsf, \mathcal{K})\in  \HL(y^{\alpha},\C)$ to be the unique solution to 
\begin{equation}\label{trunc:VI2}
\text{Find }\ue \in \mathcal{K}: a(\ue ,\ue-\ve)  \le  \langle \gsf , \tr (\ue-\ve) \rangle_{-s ,s}, \quad \forall \ve \in \mathcal{K}.
\end{equation}
We further define the set valued map $\we\mapsto \mathcal{K}^\tau(\we)$ as
\begin{equation*}
\mathcal{K}^\tau(\we):=\{ \ve \in \mathcal{K}(\we) \; | \; \ve\leq 0 \:\: \text{a.e. in } \Omega\times(\tau,+\infty)\},
\end{equation*}
and we utilize the following shorthand notation: when $\gsf$ is clear in the context, we denote 
\begin{equation}\label{S}
S(\mathcal{K}):=R(\gsf, \mathcal{K}), \quad \text{ and } \quad S^\tau(\we):=S(\mathcal{K}^\tau(\we)),
\end{equation}
for $\we\in  \HL(y^{\alpha},\C)$. The following characterization of the map $S^\tau$ will be 
useful for the remaining paper. 
\begin{lemma}\label{lemma:Increasing}
The map 
\begin{equation*}
\mathbb{R}^+\times \HL(y^{\alpha},\C)\ni (\tau, \we)\mapsto S^\tau(\we)\in \HL(y^{\alpha},\C)
\end{equation*}
is non-decreasing.
\end{lemma}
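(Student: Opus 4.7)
The plan is to use a classical lattice/truncation argument adapted to the two-parameter family of constraint sets $\mathcal{K}^\tau(\we)$. Fix $(\tau_1,\we_1)\le (\tau_2,\we_2)$ in $\mathbb{R}^+\times \HL(y^\alpha,\C)$, i.e., $\tau_1\le\tau_2$ and $\we_1\le\we_2$ a.e.\ in $\C$, and set $u_i:=S^{\tau_i}(\we_i)=R(\gsf,\mathcal{K}^{\tau_i}(\we_i))$ for $i=1,2$. The goal is to show $u_1\le u_2$ a.e.\ in $\C$ by testing the variational inequalities satisfied by $u_1$ and $u_2$ with $\min(u_1,u_2)$ and $\max(u_1,u_2)$, respectively.

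The key admissibility step is to verify that $\min(u_1,u_2)\in\mathcal{K}^{\tau_1}(\we_1)$ and $\max(u_1,u_2)\in\mathcal{K}^{\tau_2}(\we_2)$. For the first: since $\min(u_1,u_2)\le u_1$ and traces preserve inequalities, $\tr\min(u_1,u_2)\le\tr u_1\le \Psi(\tr\we_1)$; for the sign condition on $\Omega\times(\tau_1,\infty)$, split into $\Omega\times(\tau_1,\tau_2)$, where $u_1\le 0$ forces $\min(u_1,u_2)\le 0$, and $\Omega\times(\tau_2,\infty)$, where both $u_1\le 0$ and $u_2\le 0$. For the second, monotonicity of $\Psi$ in Assumption~\ref{ass1}(i) together with $\tr\we_1\le\tr\we_2$ yields
\begin{equation*}
\tr\max(u_1,u_2)\le \max\bigl(\Psi(\tr\we_1),\Psi(\tr\we_2)\bigr)=\Psi(\tr\we_2),
\end{equation*}
and on $\Omega\times(\tau_2,\infty)\subset \Omega\times(\tau_1,\infty)$ both $u_1,u_2\le 0$, so $\max(u_1,u_2)\le 0$ there. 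This admissibility check is the main obstacle, since it is precisely where the two kinds of monotonicity (in $\tau$ and in $\we$) interact.

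Next, testing the VI for $u_1$ against $\min(u_1,u_2)$ and the one for $u_2$ against $\max(u_1,u_2)$, using the identities $u_1-\min(u_1,u_2)=(u_1-u_2)^+$ and $u_2-\max(u_1,u_2)=-(u_1-u_2)^+$, and adding the two resulting inequalities, the right-hand sides involving $\gsf$ cancel and one obtains
\begin{equation*}
a\bigl(u_1-u_2,(u_1-u_2)^+\bigr)\le 0.
\end{equation*}
Writing $w:=u_1-u_2$ and invoking the identity $a(w^+,w^-)=0$ (already recorded in the proof of Theorem~\ref{thm:P}), the left-hand side equals $a((u_1-u_2)^+,(u_1-u_2)^+)$. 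By coercivity of $a$ on $\HL(y^\alpha,\C)$ this forces $(u_1-u_2)^+=0$, i.e., $u_1\le u_2$ a.e.\ in $\C$, which is the desired monotonicity of $(\tau,\we)\mapsto S^\tau(\we)$.
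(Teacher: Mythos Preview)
Your proof is correct and follows essentially the same approach as the paper: both verify the lattice property $\min(u_1,u_2)\in\mathcal{K}^{\tau_1}(\we_1)$, $\max(u_1,u_2)\in\mathcal{K}^{\tau_2}(\we_2)$ using Assumption~\ref{ass1}(i), and then derive the comparison $u_1\le u_2$. The only difference is cosmetic---the paper cites \cite[Theorem~5.1, Chapter~4]{rodrigues1987obstacle} for the last step, whereas you spell out that argument explicitly using $a(w^+,w^-)=0$ and coercivity.
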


\begin{proof}
Let $\tau\leq \tau'$ and $\we\leq \we'$ a.e. in $\C$. Hence, $\tr \we\leq \tr \we'$ a.e. in $\Omega$ and implies that $\Psi(\tr \we)\leq \Psi(\tr \we')$ a.e. in $\Omega$ and then for $\ve\in \mathcal{K}(\we)$ and $\ve'\in \mathcal{K}(\we')$, we have $\min (\ve,\ve')\in \mathcal{K}(\we)$ and $\max (\ve,\ve')\in \mathcal{K}(\we')$. Further, for any $\ze\in \HL(y^{\alpha},\C)$, if  $\ue\in \mathcal{K}^\tau(\ze)$ and $\ue'\in \mathcal{K}^{\tau'}(\ze)$, it is straightforward to check that $\min (\ue,\ue')\in \mathcal{K}^\tau(\ze)$ and $\max (\ue,\ue')\in \mathcal{K}^{\tau'}(\ze)$.  Therefore, it follows that
\begin{equation*}
\Y\in \mathcal{K}^\tau(\we), \Y'\in \mathcal{K}^{\tau'}(\we') \Longrightarrow \quad \min (\Y,\Y')\in \mathcal{K}^\tau(\we), \max (\Y,\Y')\in \mathcal{K}^{\tau'}(\we').
\end{equation*}
This yields (see \cite[Theorem 5.1, Chapter 4]{rodrigues1987obstacle}) the non-decreasing property of $ (\tau, \we)\mapsto S^\tau(\we)$.
\end{proof}

We now prove that fixed points of $S^\tau:\HL(y^{\alpha},\C)\to \HL(y^{\alpha},\C)$ can be  equivalently defined as extensions by zero, of solutions to \eqref{trunc:QVI}, to $\C$ (from $\C_\tau$). 
\begin{proposition}\label{prop:equiv}
Let $E: \HL(y^{\alpha},\C_\tau)\to  \HL(y^{\alpha},\C)$ be the extension by zero operator. If $\ue\in \HL(y^{\alpha},\C_\tau)$ is a solution to \eqref{trunc:QVI} then $E\ue\in \HL(y^{\alpha},\C)$ is a fixed point of $S^\tau$. Conversely, if $\ue\in \HL(y^{\alpha},\C)$ is a fixed point of $S^\tau$, then its restriction $\ue|_{\mathcal{C}_\tau}$ belongs to $\HL(y^{\alpha},\C_\tau)$ and solves \eqref{trunc:QVI}.
\end{proposition}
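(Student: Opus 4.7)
The plan is to characterize both sides of the equivalence as the unique solution of one and the same linear elliptic VI on $\HL(y^\alpha, \C_\tau)$ with a fixed obstacle derived from $\tr \ue$. This reduces the proposition to a bookkeeping exercise, provided one first establishes an auxiliary property of the map $S^\tau$.

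The preliminary step I would carry out is to show that for every $\we \in \HL(y^\alpha, \C)$, the function $\ze := S^\tau(\we)$ satisfies $\ze \geq 0$ and $\ze \equiv 0$ on $\C \setminus \C_\tau$. To see this, I would test the VI defining $\ze$ against $\ve := \ze^+$; this test is admissible because $(\tr \ze)^+ \leq \Psi(\tr \we)$ (using $\Psi \geq 0$) and $\ze^+ = 0$ on $\C \setminus \C_\tau$ (since $\ze \leq 0$ there). As $\ze - \ze^+ = -\ze^-$ and $\tr(\ze - \ze^+) = -(\tr\ze)^-$, the VI yields $a(\ze,\ze^-) \geq \langle \fsf, (\tr \ze)^-\rangle \geq 0$. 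Combined with the identity $a(\ze^+, \ze^-) = 0$ recalled in the proof of Theorem~\ref{thm:P}, this gives $-a(\ze^-,\ze^-) = a(\ze,\ze^-) \geq 0$; by coercivity of $a$ we obtain $\ze^- \equiv 0$, and combined with $\ze \leq 0$ outside $\C_\tau$, also $\ze \equiv 0$ there. In particular the trace of $\ze|_{\C_\tau}$ on $\Omega \times \{\tau\}$ vanishes, so $\ze|_{\C_\tau} \in \HL(y^\alpha, \C_\tau)$.

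For the forward direction, I would assume $\ue \in \HL(y^\alpha, \C_\tau)$ solves \eqref{trunc:QVI} and note that since $\ue$ vanishes on $\Omega \times \{\tau\}$, the extension $E\ue$ belongs to $\HL(y^\alpha, \C)$. For arbitrary $\we \in \mathcal{K}_\tau(\ue)$, the zero extension $E\we$ lies in $\mathcal{K}^\tau(E\ue)$ since $\tr E\we = \tr \we \leq \Psi(\tr\ue) = \Psi(\tr E\ue)$ and $E\we \leq 0$ on $\C \setminus \C_\tau$. Plugging $E\we$ into the VI defining $S^\tau(E\ue)$ and using that both $S^\tau(E\ue)$ (by the preliminary step) and $E\we$ vanish on $\C \setminus \C_\tau$, the integrals collapse to $\C_\tau$ and one concludes that $S^\tau(E\ue)|_{\C_\tau} \in \mathcal{K}_\tau(\ue)$ solves the \emph{linear} VI on the closed convex set $\mathcal{K}_\tau(\ue)$ (with the fixed obstacle $\Psi(\tr \ue)$). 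But $\ue$ itself solves this same linear VI, and the Lions-Stampacchia theorem (cf.~\cite{MR1786735}) provides uniqueness since $a_\tau$ is bilinear, continuous and coercive; hence $S^\tau(E\ue)|_{\C_\tau} = \ue$, and since $S^\tau(E\ue) = 0$ on $\C \setminus \C_\tau$, $S^\tau(E\ue) = E\ue$. The converse is entirely symmetric: given a fixed point $\ue$, the preliminary step forces $\ue \equiv 0$ on $\C \setminus \C_\tau$, so $\ue|_{\C_\tau} \in \HL(y^\alpha, \C_\tau)$; for $\we \in \mathcal{K}_\tau(\ue|_{\C_\tau})$, the test function $E\we$ lies in $\mathcal{K}^\tau(\ue)$, and inserting it into the VI for $\ue$ and collapsing integrals to $\C_\tau$ produces \eqref{trunc:QVI}.

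The genuine technical hurdle is the preliminary step: the definition of $\mathcal{K}^\tau$ enforces only $\leq 0$ outside $\C_\tau$, not $= 0$, and upgrading this to strict equality for functions in the range of $S^\tau$ is what enables the reduction to the fixed-obstacle linear VI. Without it, $\ve|_{\C_\tau}$ for a general $\ve \in \mathcal{K}^\tau(E\ue)$ need not lie in $\HL(y^\alpha, \C_\tau)$ and no direct test-function transfer between the two problems is available.
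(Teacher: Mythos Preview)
Your argument is correct. Both your proof and the paper's hinge on the same preliminary fact, namely that $S^\tau(\we)\ge 0$ and hence $S^\tau(\we)\equiv 0$ on $\C\setminus\C_\tau$; without this, as you rightly stress, there is no way to transfer test functions between the two problems.

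The execution differs in two respects. First, to obtain non-negativity the paper invokes the order-comparison machinery already developed in the proof of Theorem~\ref{thm:P} (i.e.\ $0=R(0,\mathcal{K}^\tau(\we))\le R(\fsf,\mathcal{K}^\tau(\we))$ via monotonicity in the data), whereas you give a self-contained argument by testing the VI against $\ze^+$ and using $a(\ze^+,\ze^-)=0$. Your route is slightly more elementary since it does not appeal to the comparison theorem for obstacle problems. Second, after establishing non-negativity, the paper proceeds by introducing the auxiliary constraint sets $\mathcal{K}^\tau(\we)^+:=\mathcal{K}^\tau(\we)\cap\{\ve\ge 0\}$ and $\mathcal{K}_\tau(\we)^+$, shows that the VI solution is unchanged when the nonnegativity constraint is added or removed, and then identifies $\mathcal{K}^\tau(\ue)^+$ with the zero-extensions of $\mathcal{K}_\tau(\ue)^+$. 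You bypass these intermediate sets entirely: you test the VI for $S^\tau(E\ue)$ only against extensions $E\we$ with $\we\in\mathcal{K}_\tau(\ue)$, observe that the resulting inequality is exactly the truncated linear VI on $\mathcal{K}_\tau(\ue)$, and conclude $S^\tau(E\ue)|_{\C_\tau}=\ue$ by Lions--Stampacchia uniqueness. This is a cleaner packaging of the same content; the paper's version is a bit more modular but requires the extra step of removing the nonnegativity constraint on the truncated side as well.
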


\begin{proof}
Analogously as in the proof of Theorem \ref{thm:P} with the map $T(\cdot, \cdot)$, we have  for any $\we$ that $0\leq R(\fsf,\mathcal{K}^\tau(\we))\leq \ue^*$ where $\ue^* $ solves \eqref{pde} (note that $\fsf\geq 0$ a.e. in $\Omega$). This also implies that if $\ue=S^\tau(\ue)=R(\fsf,\mathcal{K}^\tau(\ue))$ then $\ue\geq 0$. Define $\mathcal{K}^\tau(\we)^+=\mathcal{K}^\tau(\we)\cap\{\we:\we\geq 0 \text{ a.e. in } \C\}$. Since $R(\fsf,\mathcal{K}^\tau(\we))\in \mathcal{K}^\tau(\we)^+$ and $R(\fsf,\mathcal{K}^\tau(\we)^+)\in \mathcal{K}^\tau(\we)$, it is straightforward to prove that for any $\we$, we have $R(\fsf,\mathcal{K}^\tau(\we))=R(\fsf,\mathcal{K}^\tau(\we)^+)$. Since 
\begin{equation*}
\mathcal{K}^\tau(\ue)^+:=\{ \ve \in \mathcal{K}(\ue) \; | \; \ve\geq 0 \:\: \text{a.e. in } \C_\tau, \:\:  \ve= 0 \:\: \text{a.e. in } \Omega\times(\tau,+\infty)\},
\end{equation*}
we have that $\mathcal{K}^\tau(\ue)^+\equiv \{ E\ve \; | \; \ve\geq 0 \:\: \text{a.e. in } \C_\tau, \:\ve\in  \mathcal{K}_\tau(\ue)\}$. Hence, the solution to $\ue=S^\tau(\ue)=R(\fsf,\mathcal{K}^\tau(\ue))$ is equivalently defined as the extension by zero of the solution to
\begin{equation*}
\text{Find }\hat{\ue} \in \mathcal{K}_\tau(\hat{\ue})^+: a_\tau(\hat{\ue} ,\hat{\ue}-\ve)  \le  \langle \fsf , \tr (\hat{\ue}-\ve) \rangle_{-s ,s}, \quad \forall \ve \in \mathcal{K}_\tau(\hat{\ue})^+,
\end{equation*}
where  $\mathcal{K}_\tau(\we)^+=\mathcal{K}_\tau(\we)\cap\{\we:\we\geq 0 \text{ a.e. in } \C_\tau\}$. We are only left to prove that if $\hat{\ue}$ solves the above QVI, then it solves equivalently \eqref{trunc:QVI} and this is done analogously as in the begining of the proof considering that $R(\fsf,\mathcal{K}_\tau(\we))\geq 0$ and that $R(\fsf,\mathcal{K}_\tau(\we))\in  \mathcal{K}_\tau(\we)^+$, for any $\we$.  
\end{proof}

In addition to $(i)$ in Assumption \ref{ass1}, we consider the following assumption on the obstacle mapping to hold true for the rest of the paper.

\begin{ass}[second assumption on $\Psi$]\label{ass2}
\begin{itemize}
\item[$(i)$.] $\Psi(\usf)\geq \nu>0$ a.e. in $\Omega$, for all $\usf\in \Hs$.
\item[$(ii)$.] For $\usf_n, \usf^*\in \Hs$ with $n\in\mathbb{N}$: If $\usf_n\to \usf^*$ in $L^p(\Omega)$, for all $p>1$ then $\Psi(\usf_n)\to \Psi(\usf^*)$ in $L^\infty(\Omega)$.
\end{itemize}
\end{ass}
 
Note that the map $\Psi$ in \eqref{eq:icPsi} does not satisfy $(ii)$ in {Assumption~\ref{ass2}}. However, such property would hold for an appropriate regularization $\tilde{\Psi}$ of $\Psi$ defined as {$\tilde{\Psi}(\usf):=\Psi\circ I(\usf)$} where $I$ is some integral approximation of the identity. 
 
In what follows, we prove convergence of solutions of \eqref{trunc:QVI} to the solution of  \eqref{local:QVI} in a general framework.  First, we define Mosco convergence for closed, convex and non-empty subsets on a reflexive Banach space (see \cite{mosco1969convergence, rodrigues1987obstacle}). 
\begin{definition}[\textsc{Mosco convergence}]\label{definition:MoscoConvergence}
Let $\mathcal{K}$ and $\mathcal{K}_n$, for each $n\in\mathbb{N}$, be non-empty, closed and convex subsets of a reflexive Banach space $X$. We say that the sequence \textit{$\{\mathcal{K}_n\}$ converges to $\mathcal{K}$ in the sense of Mosco} as $n\rightarrow\infty$, if the following two conditions hold:
\begin{description}
  \item[\quad (i)] For each $v\in \mathcal{K}$, there exists $\{v_n\}$ such that $v_n\in \mathcal{K}_n$ and $v_n\rightarrow v$ in $X$.
  \item[\quad (ii)] If $v_n\in \mathcal{K}_n$ and $v_n\rightharpoonup v$ in $X$ 
  along a subsequence, 
  then $v\in \mathcal{K}$.
\end{description}
\end{definition}

The importance of Mosco convergence lies in the fact that if  $\mathcal{K}_n\to \mathcal{K}$ in the sense of Mosco for $\HL(y^{\alpha},\C)$, then it follows that $S(\mathcal{K}_n)\to S(\mathcal{K})$ in  $\HL(y^{\alpha},\C)$ (see \cite{mosco1969convergence, rodrigues1987obstacle}). We now  provide conditions for Mosco convergence for $\{\mathcal{K}^\tau(\ve_n)\}$  and $\{\mathcal{K}^{\tau_n}(\ve_n)\}$ for sequences $\{\ve_n\}$ and $\{\tau_n\}$ in $\HL(y^{\alpha},\C)$ and $\mathbb{R}^+$, respectively.

\begin{lemma}\label{lemma:Mosco}
Let $\{\ve_n\}$ be a bounded sequence in $\HL(y^{\alpha},\C)$ and  satisfy $0\leq \ve_n\leq \ve_{n+1}\leq \Y^*$ for some  $ \Y^*\in \HL(y^{\alpha},\C)$ such that $\tr  \Y^* \in L^\infty(\Omega)$. Then, given $\tau^*\in (0,+\infty]$ and a sequence $\{\tau_n\}$ in $(0,+\infty)$ such $\tau_n\to \tau^*$, there exists $\ve^*\in \HL(y^{\alpha},\C)$ such that
\begin{equation*}
\mathcal{K}^{\tau^*}(\ve_n)\to \mathcal{K}^{\tau^*}(\ve^*), \quad \text{ and } \quad \mathcal{K}^{\tau_n}(\ve_n)\to \mathcal{K}^{\tau^*}(\ve^*),
\end{equation*}
both in the sense of Mosco, as $n\to\infty$,  where $\ve_n \rightharpoonup \ve^*,$   in $\HL(y^{\alpha},\C)$ and $\ve_n \uparrow \ve^*$  pointwise a.e. in $\C$. Here, if $\tau^*=+\infty$,  we denote  $\mathcal{K}^{\tau^*}(\ve^*):= \mathcal{K}(\ve^*)$. 
\end{lemma}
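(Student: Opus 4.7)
The plan is to identify the weak and pointwise limit $\ve^*$, transfer the convergence to the obstacle map $\Psi$, and then verify the two Mosco conditions for each of the set-valued maps. Since $\{\ve_n\}$ is bounded in $\HL(y^{\alpha},\C)$, extract a weakly convergent subsequence; the pointwise monotonicity $\ve_n\le \ve_{n+1}\le \Y^*$ combined with monotone convergence identifies the a.e.\ limit $\ve^*\in \HL(y^\alpha,\C)$, which must agree with the weak limit, so the full sequence converges weakly and pointwise a.e. Taking traces, $\{\tr\ve_n\}$ is non-decreasing and dominated by $\tr\Y^*\in L^\infty(\Omega)$, so by dominated convergence $\tr\ve_n\to \tr\ve^*$ in every $L^p(\Omega)$ with $p\in [1,\infty)$; Assumption~\ref{ass2}$(ii)$ then yields $\Psi(\tr\ve_n)\to \Psi(\tr\ve^*)$ in $L^\infty(\Omega)$, and by Assumption~\ref{ass1}$(i)$ this convergence is monotone non-decreasing.

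For the Mosco lim-inf (weak closure) condition, take $\we_n\in \mathcal{K}^{\tau_n}(\ve_n)$ with $\we_n\rightharpoonup \we$ along a subsequence. Continuity of the trace operator gives $\tr\we_n\rightharpoonup \tr\we$ in $\Hs$, and combining this with the strong convergence $\Psi(\tr\ve_n)\to\Psi(\tr\ve^*)$ in $L^\infty(\Omega)$ (via Mazur's lemma, or testing against non-negative $L^\infty$ functions) yields $\tr\we\le \Psi(\tr\ve^*)$ a.e.\ in $\Omega$. For the upper-layer constraint, fix $\delta>0$ and a bounded open set $U\subset \Omega\times(\tau^*+\delta,\infty)$ (vacuous if $\tau^*=+\infty$); since $\tau_n\to \tau^*$, eventually $U\subset \Omega\times(\tau_n,\infty)$, hence $\we_n\le 0$ on $U$, and weak convergence in $L^2(y^\alpha,U)$ preserves this inequality; letting $\delta\downarrow 0$ delivers $\we\le 0$ on $\Omega\times(\tau^*,\infty)$, whence $\we\in \mathcal{K}^{\tau^*}(\ve^*)$. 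The argument for $\mathcal{K}^{\tau^*}(\ve_n)\to \mathcal{K}^{\tau^*}(\ve^*)$ is a direct specialization.

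For the recovery (lim-sup) condition, given $\ve\in\mathcal{K}^{\tau^*}(\ve^*)$ set
\[
\lambda_n:=\nu^{-1}\|\Psi(\tr\ve^*)-\Psi(\tr\ve_n)\|_{L^\infty(\Omega)}\to 0,
\]
and consider the candidate $\widehat{\we}_n:=(1-\lambda_n)\ve^+-\ve^-$. Using Assumption~\ref{ass2}$(i)$, namely $\Psi(\tr\ve^*)\ge \nu$ a.e., the inequality $(1-\lambda_n)\Psi(\tr\ve^*)\le \Psi(\tr\ve_n)$ holds a.e., whence $\tr\widehat{\we}_n\le (1-\lambda_n)(\tr\ve)^+\le \Psi(\tr\ve_n)$ and $\widehat{\we}_n\to \ve$ in $\HL(y^\alpha,\C)$. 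This $\widehat{\we}_n$ already settles the recovery for $\mathcal{K}^{\tau^*}(\ve_n)$, because $\ve\in\mathcal{K}^{\tau^*}(\ve^*)$ forces $\ve^+\equiv 0$ on $\Omega\times(\tau^*,\infty)$, so $\widehat{\we}_n=-\ve^-\le 0$ on that set. For $\mathcal{K}^{\tau_n}(\ve_n)$, first approximate $\ve^+$ in $\HL(y^\alpha,\C)$ by a sequence $\{\ve_k^+\}$ with $\mathrm{supp}\,\ve_k^+\subset \Omega\times[0,\sigma_k]$ and $\sigma_k\uparrow \tau^*$ (or $\sigma_k\to\infty$ if $\tau^*=+\infty$); then $\we_n^{(k)}:=(1-\lambda_n)\ve_k^+-\ve^-$ belongs to $\mathcal{K}^{\tau_n}(\ve_n)$ as soon as $\sigma_k<\tau_n$, and a standard diagonal choice $k=k(n)\to\infty$ provides the desired recovery.

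The main technical obstacle is the density claim supporting the recovery step: approximating an element of $\HL(y^\alpha,\C)$ that vanishes on $\Omega\times(\tau^*,\infty)$ by functions whose $y$-support lies strictly inside $[0,\tau^*)$, in the $\HL(y^\alpha,\C)$ topology. A naive cutoff $\chi(y)\ve^+$ generates a gradient term $\chi'(y)\ve^+$ whose weighted $L^2(y^\alpha)$-norm must be controlled by the vanishing of $\ve^+$ as $y\uparrow\tau^*$, where the absolute continuity in $y$ of elements of the weighted Sobolev space and the specific form of the weight $y^\alpha$ are decisive. Once this density step is secured, the remainder of the argument, including the diagonal extraction and the Mosco closure, is fairly standard.
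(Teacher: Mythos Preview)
Your argument follows the same route as the paper's: identify $\ve^*$ via boundedness and monotonicity, upgrade to $\Psi(\tr\ve_n)\to\Psi(\tr\ve^*)$ in $L^\infty(\Omega)$ via Assumption~\ref{ass2}, verify weak closure by passing to the limit in the constraints, and construct recovery sequences by a multiplicative scaling (to absorb the $L^\infty$-error in the obstacle) combined with a cutoff in the extended variable. The paper scales the full test function $\we$ by $\beta_n:=(1+\nu^{-1}\|\eta_n-\eta^*\|_{L^\infty(\Omega)})^{-1}$ rather than only its positive part, and for the second Mosco statement directly sets $\Y_n:=\rho_n\beta_n\we$ with $\rho_n$ a smooth cutoff of fixed transition width supported in $\{y<\tau_n\}$; splitting $\C$ into three $y$-zones and using that $\int_{\Omega\times(\tau_n-\epsilon,\infty)}y^\alpha(|\nabla\we|^2+|\we|^2)\to 0$ as $\tau_n\to\infty$ disposes of what you call the ``main technical obstacle'' without any density lemma or Hardy-type argument. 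Your approximation-plus-diagonal scheme is a valid alternative, and is arguably better suited to finite $\tau^*$ (which the paper dismisses as analogous), but you should specify that the approximants $\ve_k^+$ are obtained by a cutoff in $y$ alone so that $\tr\ve_k^+=\tr\ve^+$; otherwise the trace constraint $\tr\we_n^{(k)}\le\Psi(\tr\ve_n)$ does not follow from the support condition $\sigma_k<\tau_n$ by itself, since a generic approximation in $\HL(y^\alpha,\C)$ may perturb the trace on $\Omega\times\{0\}$.
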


\begin{proof}
Since $\{\ve_n\}$ is a bounded sequence in $\HL(y^{\alpha},\C)$, then $\ve_n \rightharpoonup \ve^*$ in $ \HL(y^{\alpha},\C)$, along a subsequence,  as $n\to\infty$. However, $0\leq \ve_n\leq \ve_{n+1}\leq \Y^*$ which implies $0\leq \tr \ve_n\leq \tr \ve_{n+1}\leq \tr\Y^*\in L^\infty(\Omega)$ and this yields that for some $\ve^*\in  \HL(y^{\alpha},\C)$ we have
\begin{align*}
\ve_n \uparrow \ve^*,& \quad\text{ pointwise a.e. in } \C;\\
\ve_n \rightharpoonup \ve^*,& \quad\text{ in } \HL(y^{\alpha},\C);\\
\Psi (\tr \ve_n)\to \Psi (\tr \ve^*),& \quad \text{ in } L^\infty(\Omega);
\end{align*}
for the whole sequence $\{\ve_n\}$ and not only a subsequence (the gap between the subsequence argument and the entire {sequence is} bridged by the monotonicity of the entire sequence $\{\ve_n\}$). Here we have used Assumption \eqref{ass2} with the fact that $\tr \ve_n, \tr \ve^*\in \Hs$ for $n\in\mathbb{N}$ and $\tr \ve_n\to \tr \ve^*$  in  $L^p(\Omega)$ for all $p>1$: Note that we immediately have that $\tr \ve_n\rightharpoonup \tr \ve^*$  in  $L^p(\Omega)$ for all $p>1$, and by the monotone convergence theorem $\|\tr \ve_n\|_{L^p(\Omega)}\to \|\tr \ve^*\|_{L^p(\Omega)}$ which implies the claim (see  Proposition 3.32 in \cite{MR2759829} which shows that weak convergence {in conjunction with} convergence of the norms implies strong convergence).

Suppose that $\we_n\in \mathcal{K}^{\tau^*}(\ve_n)$ for $n\in\mathbb{N}$ and that $\we_n \rightharpoonup \we^*$  in $\HL(y^{\alpha},\C)$ for some $\we^*$. Since $\tr \HL(y^{\alpha},\C)\equiv \Hs$ and $ \Hs$ compactly embeds into $L^2(\Omega)$, from $\tr \we_n \le \Psi(\tr \ve_n)$ a.e. in $\Omega$  and $\we_n \le 0$ a.e. in $\Omega\times(\tau,+\infty)$, we observe that $\we^*\in \mathcal{K}^{\tau^*}(\ve^*)$ by taking the limit as $n\to \infty$ (along a subsequence) since Assumption \ref{ass2} holds.

Let $\we\in \mathcal{K}^{\tau^*}(\ve^*)$ be arbitrary. Since $\Psi (\tr \ve_n)\geq  \nu>0$, we define $\beta_n:=(1+\|\eta_n-\eta^*\|_{L^\infty(\Omega)}/\nu)^{-1}$, where $\eta_n:=\Psi (\tr \ve_n)$ and $\eta^*:=\Psi (\tr \ve^*)$, and observe that $\beta_n\uparrow 1$ and further $\beta_n\eta_n\leq \eta^*$. Therefore, $\we_n:=\beta_n\we\in \mathcal{K}^{\tau^*}(\ve_n)$ and $\we_n\to \we$ in $\HL(y^{\alpha},\C)$. This proves that $\mathcal{K}^{\tau^*}(\ve_n)\to \mathcal{K}^{\tau^*}(\ve^*)$ in the Mosco sense.

Suppose that $\we_n\in \mathcal{K}^{\tau_n}(\ve_n)$ and that $\we_n \rightharpoonup \we^*$  in $\HL(y^{\alpha},\C)$ for some $\we^*$. Then, we obtain that $\tr \we^* \le \Psi(\tr \ve^*)$ a.e. in $\Omega$ from taking the $n\to\infty$ limit (along a subsequence) in $\tr \we_n \le \Psi(\tr \ve_n)$ a.e. in $\Omega$, i.e., $\we^*\in\mathcal{K}^{\tau^*}(\ve^*)$.

Let $\we\in \mathcal{K}^{\tau^*}(\ve^*)$ be arbitrary and without loss of generality consider $\tau^*=+\infty$ (the case  $\tau^*\in(0,+\infty)$ is handled analogously). Then, as before, $\we_n:=\beta_n\we$ satisfies $\tr \we_n \le \Psi(\tr \ve_n)$ for the same $\beta_n$ defined in the previous paragraphs and also $\we_n\to\we$ in $\HL(y^{\alpha},\C)$ as $n\to\infty$. Define $\rho_n:\C\to\mathbb{R}$ such that $\rho_n(\Omega\times(\tau_n,+\infty))\equiv 0$,  $\rho_n(\Omega\times[0,\tau_n-\epsilon))\equiv 1$, and smooth on $\Omega\times[\tau_n-\epsilon,\tau_n)$ and such that $|\nabla \rho_n|\leq m$ a.e. for some $m>0$. Then, we define $\Y_n:=\rho_n\we_n$ which satisfies that $\Y_n\in \mathcal{K}^{\tau_n}(\ve_n)$. In addition, we observe that
\begin{equation*}
|\we-\Y_n|_{\HL(y^{\alpha},\C)}^2\leq I^1_n(\tau_n)+I^2_n(\tau_n)+I^3(\tau_n),
\end{equation*}
where
\begin{align*}
I^1_n(\tau_n):=\int_{\Omega\times[0,\tau_n-\epsilon))}y^\alpha |\nabla(\we-\we_n)|^2, \:\: I^2_n(\tau_n):=\int_{\Omega\times[\tau_n-\epsilon,\tau_n))}y^\alpha |\nabla(\we-\rho_n\we_n)|^2,
\end{align*}
and $I^3_n(\tau_n):=\int_{\Omega\times[\tau_n,+\infty))}y^\alpha |\nabla\we|^2$. Note that 
\begin{equation*}
I^1_n(\tau_n)\leq\int_{\Omega\times[0,+\infty))}y^\alpha |\nabla(\we-\we_n)|^2\to 0, \quad \text{ as } \quad n\to \infty,
\end{equation*}
given that $\we_n\to\we$ in $\HL(y^{\alpha},\C)$ as $n\to\infty$. Since $\rho_n$ is smooth, $|\nabla \rho_n|\leq m$ a.e. for some $m>0$ and also $\beta_n\in(0,1)$, it follows that 
\begin{equation*}
I^2_n(\tau_n)\leq C \int_{\Omega\times[\tau_n-\epsilon,\tau_n))}y^\alpha |\nabla\we|^2\to 0, \quad \text{ as } \quad n\to \infty,
\end{equation*}
for some $C>0$ independent of $n\in\mathbb{N}$. In addition, we also have that $I^3(\tau_n)\to 0$ as $n\to\infty$. This shows that $\Y_n\to \we$ in $\HL(y^{\alpha},\C)$ as $n\to \infty$ and consequently that $\mathcal{K}^{\tau_n}(\ve_n)\to \mathcal{K}^{\tau^*}(\ve^*)\equiv\mathcal{K}(\ve^*)$ in the Mosco sense.
\end{proof}


We are now in position to provide the proof of how problem \eqref{trunc:QVI} approximates \eqref{local:QVI}. The result is made available at this point by Proposition \ref{prop:equiv}  and Lemma \ref{lemma:Mosco}. From now on, we omit the use of the extension by zero operator $E$ and consider all functions to be defined on $\C$.

\begin{theorem}\label{thm:convergence}
Problem \eqref{trunc:QVI} admits solutions for $\tau\in(0,\infty)$. Further, let $\{\tau_n\}$ be a positive sequence such that $\tau_n\to \infty$ for $n\to \infty$, then there exists  a sequence $\{\ue_{\tau_n}\}$, where $\ue_{\tau_n}$ solves $(\mathrm{P}_{\tau_n})$ for $n\in\mathbb{N}$, such that
\begin{equation*}
\ue_{\tau_n}\to \ue, \:\: \text{ in } \HL(y^{\alpha},\C),
\end{equation*}
as $n\to \infty$ for some $\ue$ that solves  \eqref{local:QVI}. 
\end{theorem}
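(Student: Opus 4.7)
The plan is to first establish existence of a solution to $(\mathrm{P}_\tau)$ for each finite $\tau$, then construct a monotone sequence of such solutions indexed by $\tau_n \uparrow \infty$, and finally combine Lemma~\ref{lemma:Mosco} with the classical stability of variational inequalities under Mosco convergence of the constraint set to extract a strongly convergent subsequence whose limit solves \eqref{local:QVI}.

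\noindent\emph{Existence for $(\mathrm{P}_\tau)$.} I would reproduce the sub/super-solution argument from Theorem~\ref{thm:P}, but on $\C_\tau$ (or, equivalently, work with the map $S^\tau$ on $\HL(y^{\alpha},\C)$). Define $\we \mapsto S^\tau(\we) = R(\fsf,\mathcal{K}^\tau(\we))$; by Lemma~\ref{lemma:Increasing} this map is non-decreasing in $\we$. The ordering inequalities $0 \le S^\tau(0)$ and $S^\tau(\ue^*) \le \ue^*$, where $\ue^*$ solves the unconstrained problem \eqref{pde}, follow exactly as in the proof of Theorem~\ref{thm:P} from the comparison principle for variational inequalities. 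Applying Birkhoff--Tartar to the non-decreasing map $S^\tau$ on the order interval $[0,\ue^*]$ yields a fixed point, and Proposition~\ref{prop:equiv} then identifies this with a solution of $(\mathrm{P}_\tau)$.

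\noindent\emph{Monotone selection along $\tau_n$.} Without loss of generality take $\tau_n$ increasing (Lemma~\ref{lemma:Increasing} is the key). Starting from $\we_n^0 = 0$ and iterating $\we_n^{k+1} := S^{\tau_n}(\we_n^k)$, the sequence $\{\we_n^k\}_k$ is monotone in $k$ and bounded above by $\ue^*$; its monotone limit $\ue_{\tau_n}$ is a fixed point of $S^{\tau_n}$ and, by Proposition~\ref{prop:equiv}, a solution of $(\mathrm{P}_{\tau_n})$. The monotonicity of $S^\tau$ in $\tau$ propagates through the iteration: $\we_n^k \le \we_{n+1}^k$ for all $k$, whence $\ue_{\tau_n} \le \ue_{\tau_{n+1}} \le \ue^*$. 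Testing the variational inequality $(\mathrm{P}_{\tau_n})$ against $\ve = 0$ (which lies in $\mathcal{K}_{\tau_n}(\ue_{\tau_n})$ since $\Psi \ge 0$) and using coercivity of $a$ produces a uniform $\HL(y^\alpha,\C)$-bound on $\{\ue_{\tau_n}\}$.

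\noindent\emph{Passage to the limit via Mosco convergence.} The sequence $\{\ue_{\tau_n}\}$ satisfies the hypotheses of Lemma~\ref{lemma:Mosco} with $\Y^* = \ue^*$ (recall $\tr \ue^* \in L^\infty(\Omega)$ by Theorem~\ref{theo-bound-1}), so there exists $\ue \in \HL(y^\alpha,\C)$ with $\ue_{\tau_n} \rightharpoonup \ue$, $\ue_{\tau_n} \uparrow \ue$ pointwise, and
\[
\mathcal{K}^{\tau_n}(\ue_{\tau_n}) \xrightarrow{\text{Mosco}} \mathcal{K}(\ue).
\]
Since $a$ is a continuous coercive bilinear form and the linear functional $\ve \mapsto \langle \fsf,\tr \ve\rangle_{-s,s}$ is fixed, the classical Mosco--stability theorem for variational inequalities (Mosco~\cite{mosco1969convergence}, see also \cite{rodrigues1987obstacle}) gives strong convergence
\[
\ue_{\tau_n} = S^{\tau_n}(\ue_{\tau_n}) \longrightarrow S(\mathcal{K}(\ue)) \quad \text{in } \HL(y^\alpha,\C).
\]
Uniqueness of weak/strong limits then forces $\ue = S(\mathcal{K}(\ue))$, which is precisely \eqref{local:QVI}.

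\noindent\emph{Expected main obstacle.} The existence and the weak compactness are routine; the decisive step is verifying that the selected sequence $\{\ue_{\tau_n}\}$ meets the hypotheses of Lemma~\ref{lemma:Mosco}, which in turn rests on the monotonicity of $S^\tau$ in \emph{both} arguments. The subtlety is that an arbitrary choice of solutions to $(\mathrm{P}_{\tau_n})$ need not be monotone (the truncated QVI may have several solutions), so the Picard construction from $0$ is essential to produce a sequence to which Lemma~\ref{lemma:Mosco} applies and through which the strong Mosco-convergence of VI solutions can be invoked.
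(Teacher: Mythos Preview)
Your proposal is correct and matches the paper's argument almost step for step: construct minimal solutions via Picard iteration from $0$ for each $\tau_n$, use the monotonicity of $S^\tau$ in both arguments (Lemma~\ref{lemma:Increasing}) to obtain an ordered sequence bounded by $\ue^*$, and then combine Lemma~\ref{lemma:Mosco} with Mosco stability of variational inequalities to pass to the strong limit and identify it as a fixed point of $S(\mathcal{K}(\cdot))$. The one place where you are terser than the paper is the assertion that the monotone pointwise limit of the Picard iterates $\{\we_n^k\}_k$ is actually a fixed point of $S^{\tau_n}$; the paper justifies this by applying Lemma~\ref{lemma:Mosco} a second time (now with $\tau$ fixed) to the Picard sequence itself, obtaining $\mathcal{K}^{\tau_n}(\we_n^k)\to\mathcal{K}^{\tau_n}(\ue_{\tau_n})$ in the Mosco sense and hence $\we_n^{k+1}=S(\mathcal{K}^{\tau_n}(\we_n^k))\to S(\mathcal{K}^{\tau_n}(\ue_{\tau_n}))$ strongly, which closes the fixed-point identity---you should make this step explicit, since a bare pointwise monotone limit need not lie in $\HL(y^\alpha,\C)$ nor satisfy the fixed-point equation without this continuity input.
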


\begin{proof}
Existence of solutions to \eqref{trunc:QVI} follow from the same arguments as in Theorem \ref{thm:P}. We concentrate on the second part of the statement. 


Since $\tau\mapsto S^\tau(\we)$ is an increasing map, we have for $\tau\leq \tau'$ that
\begin{equation}\label{ineq}
0\leq S^\tau(\we)\leq  S^{\tau'}(\we)\leq T(\we)\leq \ue^*, 
\end{equation}
a.e. where $T(\we)$ denotes the solution to \eqref{vi} and $ \ue^*$ the solution to the unconstrained problem \eqref{pde}. Further, the maps $\we\mapsto S^\tau(\we)$, for $\tau\in (0,+\infty)$ and $\we\mapsto T(\we)$, are non-decreasing, which implies that the sequences $\{\ve_n\}$ and $\{\ue_n\}$ defined as $\ve_n=S_\tau(\ve_{n-1})$,  $\we_n=T(\we_{n-1})$ with $\ve_0=\we_0=0$ are non-decreasing, as well, and located on the interval $[0, \ue^*]$. A simple optimization argument shows that $\{\ve_n\}$ and $\{\we_n\}$ are also bounded in $\HL(y^{\alpha},\C)$, and then the sequences satisfy the assumptions of Lemma \ref{lemma:Mosco}. This implies 
\begin{equation*}
\mathcal{K}^\tau(\ve_n)\to \mathcal{K}^\tau(\ve^*), \:\:\text{ in the sense of Mosco},
\end{equation*}
as $n\to\infty$ where $\ve_n\rightharpoonup \ve^*$ in $\HL(y^{\alpha},\C)$. Therefore, we have that $\ve_n=S^\tau(\ve_{n-1})=S(\mathcal{K}^\tau(\ve_{n-1}))\to S^\tau(\ve^*)$ in $\HL(y^{\alpha},\C)$ as $n\to\infty$, and hence
\begin{equation*}
\ve_n\to \ve^*, \:\: \text{ in }\HL(y^{\alpha},\C),
\end{equation*}
as $n\to\infty$ and $\ve^*$ solves \eqref{trunc:QVI}. The same argument applies to the sequence $\{\we_n\}$ and we obtain also that $\we_n\to \we^*$ in $\HL(y^{\alpha},\C)$ and that $\we^*$ solves \eqref{local:QVI}. 

From the above argument and \eqref{ineq}, we observe that if $\ue_{\tau_n}$ and $\ue_{\tau_{n+1}}$ are solutions, obtained by the above paragraph iteration procedure, to $(\mathrm{P}_{\tau_n})$  and $(\mathrm{P}_{\tau_{n+1}})$, respectively, we have
\begin{equation}\label{ineq2}
0\leq \ue_{\tau_n}\leq  \ue_{\tau_{n+1}}\leq \we^* \leq \ue^*,
\end{equation}
which implies that $\ue_{\tau_n}\uparrow \ue$ pointwise in $\C$ and also $\ue_{\tau_n}\rightharpoonup \ue$ in $\HL(y^{\alpha},\C)$, for some $\ue$. Hence by Lemma \ref{lemma:Mosco} we have that
\begin{equation}\label{eq:Mosco}
\mathcal{K}^{\tau_n}(\ue_{\tau_n})\to \mathcal{K}(\ue), \:\:\text{ in the sense of Mosco}.
\end{equation}
Finally, since $\ue_{\tau_n}=S(\mathcal{K}^{\tau_n}(\ue_{\tau_n}))$ and \eqref{eq:Mosco} holds, $\ue_{\tau_n}=S(\mathcal{K}^{\tau_n}(\ue_{\tau_n}))\to S(\mathcal{K}(\ue))$. Given that  $\ue_{\tau_n}\rightharpoonup \ue$, we have $\ue_{\tau_n}\to \ue$ in $\HL(y^{\alpha},\C)$.  Hence, $\ue= S(\mathcal{K}(\ue))$, i.e., $\ue$ is a solution to \eqref{local:QVI}.
\end{proof}


\section{An algorithm}\label{s:algo} 

In this section we introduce a solution algorithm for \eqref{local:QVI} or \eqref{trunc:QVI}. In particular, 
we consider a sequential minimization solution algorithm that converges under mild assumptions to the solution of \eqref{local:QVI} or \eqref{trunc:QVI}, depending on if the limit of the  truncation parameter sequence is finite or not.
 
\begin{algorithm}
	\caption{Increasing Monotonic Sequential Minimization}
		\label{Algorithm}
	\textbf{Data:} $\fsf\in L^\infty(\Omega)$ and $\fsf\geq 0$ a.e. in $\Omega$, non-negative real valued sequence $\{\tau_m\}_{m=0}^\infty$
	\begin{algorithmic}[1]
	\State Set $\ue_{0}\in \HL(y^{\alpha},\C)$ to satisfy $0\leq \ue_{0}\leq S(\mathcal{K}^{\tau_0}(\ue_{0}))$ and $n=0$.
	\State \textbf{repeat}
	\State \label{step1}\quad Compute  $ \ue_{n+1}:=S(\mathcal{K}^{\tau_n}(\ue_n)).$
	\State \quad Set $n=n+1$.
	\State \textbf{until} some stopping rule is satisfied.
	\end{algorithmic}
\end{algorithm}

Before we establish the convergence result for the above algorithm note that $\ue_{0}$ in Algorithm \ref{Algorithm} can be taken to be zero.

\begin{theorem}\label{thm:algoconv}
Let $\{\ue_n\}_{n=0}^\infty$ be generated by Algorithm \ref{Algorithm} for a monotonically increasing sequence $\{\tau_m\}_{m=0}^\infty$. Then,
\begin{equation*}
\ue_n\to \ue, \:\: \text{ in } \HL(y^{\alpha},\C),
\end{equation*}
where $\ue$ solves \eqref{local:QVI} if $\lim_{m\to\infty}\tau_m= \infty$ and $\ue|_{\C_\tau}$ solves  \eqref{trunc:QVI}  if $\lim_{m\to\infty}\tau_m= \tau <\infty$
  
\end{theorem}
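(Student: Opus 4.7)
The plan is to mirror the structure used in the proof of Theorem~\ref{thm:convergence}, exploiting monotonicity of $(\tau,\we)\mapsto S^{\tau}(\we)$ from Lemma~\ref{lemma:Increasing}, the pointwise/norm bound by the unconstrained solution $\ue^*$ from \eqref{pde}, Mosco convergence from Lemma~\ref{lemma:Mosco}, and the classical fact (see \cite{mosco1969convergence, rodrigues1987obstacle}) that the solution map of a variational inequality with fixed bilinear form is continuous with respect to Mosco convergence of its constraint set.

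First I would establish monotonicity of the iterates $\{\ue_n\}$. The initialization gives $0\le \ue_0\le S(\mathcal{K}^{\tau_0}(\ue_0))=\ue_1$. Assuming inductively that $\ue_n\le \ue_{n+1}$, Lemma~\ref{lemma:Increasing} combined with $\tau_n\le \tau_{n+1}$ yields
\begin{equation*}
\ue_{n+1}=S(\mathcal{K}^{\tau_n}(\ue_n))\le S(\mathcal{K}^{\tau_{n+1}}(\ue_{n+1}))=\ue_{n+2},
\end{equation*}
so $\{\ue_n\}$ is non-decreasing a.e.\ in $\C$. Next I would derive a uniform upper envelope: from the same arguments used in the proof of Theorem~\ref{thm:P} (comparing the obstacle map with $\tilde{\Psi}\equiv +\infty$), one has $\ue_n\le \ue^*$ a.e.\ in $\C$ for all $n$, where $\ue^*$ solves \eqref{pde}. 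Testing the variational inequality defining $\ue_{n+1}=S(\mathcal{K}^{\tau_n}(\ue_n))$ against $\ve=0\in \mathcal{K}^{\tau_n}(\ue_n)$ (which is admissible since $\Psi\ge \nu>0$) produces $a(\ue_{n+1},\ue_{n+1})\le \langle \fsf,\tr \ue_{n+1}\rangle_{-s,s}$; coercivity of $a$ on $\HL(y^{\alpha},\C)$ and continuity of the trace then yield a uniform bound $\|\ue_n\|_{\HL(y^{\alpha},\C)}\le C$.

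Third, by monotonicity and the pointwise domination by $\tr \ue^*\in L^{\infty}(\Omega)$, the hypotheses of Lemma~\ref{lemma:Mosco} are met along the sequence $\{\ue_n\}$ with $\{\tau_n\}$; writing $\tau^*:=\lim_{n\to\infty}\tau_n\in (0,+\infty]$, there exists $\ue\in \HL(y^{\alpha},\C)$ with $\ue_n\uparrow \ue$ a.e., $\ue_n\rightharpoonup \ue$ in $\HL(y^{\alpha},\C)$, and
\begin{equation*}
\mathcal{K}^{\tau_n}(\ue_n)\to \mathcal{K}^{\tau^*}(\ue)\quad \text{in the sense of Mosco},
\end{equation*}
where $\mathcal{K}^{\tau^*}(\ue)=\mathcal{K}(\ue)$ if $\tau^*=+\infty$. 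Applying Mosco-continuity of the VI solution map gives $\ue_{n+1}=S(\mathcal{K}^{\tau_n}(\ue_n))\to S(\mathcal{K}^{\tau^*}(\ue))$ strongly in $\HL(y^{\alpha},\C)$. Uniqueness of the weak limit forces $\ue=S(\mathcal{K}^{\tau^*}(\ue))$, i.e.\ $\ue$ is a fixed point, and the strong convergence of the shifted subsequence together with monotonicity of the entire sequence upgrades to $\ue_n\to \ue$ in $\HL(y^{\alpha},\C)$.

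Finally I would invoke Proposition~\ref{prop:equiv}: in the finite case $\tau^*=\tau<\infty$, the fixed-point identity $\ue=S(\mathcal{K}^{\tau}(\ue))$ is equivalent to $\ue|_{\C_\tau}$ solving \eqref{trunc:QVI}; in the case $\tau^*=+\infty$, the identity $\ue=S(\mathcal{K}(\ue))$ is precisely the statement that $\ue$ solves \eqref{local:QVI}. The main obstacle I anticipate is the careful bookkeeping in step three, where one must simultaneously let the iteration index grow and the truncation parameter tend to its (possibly infinite) limit, so that both horizontal (constraint set via $\Psi(\tr\ue_n)$) and vertical (cylinder height $\tau_n$) variations have to be handled in one Mosco passage to the limit; Lemma~\ref{lemma:Mosco} has been designed for exactly this setting, so the argument reduces to verifying its monotonicity hypothesis, which was done in the first two steps.
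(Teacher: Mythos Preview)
Your proposal is correct and follows essentially the same approach as the paper: monotonicity of the iterates via Lemma~\ref{lemma:Increasing}, the a.e.\ upper bound by $\ue^*$, a uniform energy bound, Mosco convergence of $\mathcal{K}^{\tau_n}(\ue_n)$ via Lemma~\ref{lemma:Mosco}, and passage to the limit using continuity of the VI solution map under Mosco convergence. Your explicit use of the test function $\ve=0$ to obtain the $\HL(y^{\alpha},\C)$ bound and your invocation of Proposition~\ref{prop:equiv} for the finite-$\tau$ case add welcome detail that the paper leaves implicit, but the overall strategy is the same.
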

\begin{proof}
The map $(\tau,\ue)\mapsto S( \mathcal{K}^{\tau}(\ue)) $ is non-decreasing for $\tau>0$ and $\ue\geq \ue_0$ (see Lemma \ref{lemma:Increasing}), then we prove that
\begin{equation}\label{eq:IneyInc}
0\leq \ue_0\leq \ue_n\leq \ue_{n+1}\leq \ue^*
\end{equation}
where  $ \ue^*$ denotes the solution to the unconstrained problem \eqref{pde}. We proceed by induction. Since by definition $\ue_0$ is a sub-solution of $\we\mapsto S( \mathcal{K}^{\tau_0}(\we))$, we have that $\ue_{0}\leq S( \mathcal{K}^{\tau_0}(\ue_0))=:\ue_1$. Suppose that for some $n\in \mathbb{N}$, we observe that $\ue_{n-1}\leq \ue_{n}$, then by  the non-decreasing property of the map $(\tau,\ue)\mapsto S( \mathcal{K}^{\tau}(\ue))$ and the definition of $\ue_n$ and $\ue_{n+1}$, we observe
\begin{equation*}
\ue_0\leq \ue_n:=S( \mathcal{K}^{\tau_{n-1}}(\ue_{n-1}))\leq S( \mathcal{K}^{\tau_{n}}(\ue_{n}))=:\ue_{n+1}. 
\end{equation*}
The fact that $\ue_{n+1}\leq \ue^*$ follows since $S( \mathcal{K}^{\tau}(\we))\leq \ue^*$ holds for all $\we\in \HL(y^{\alpha},\C)$.

Suppose that $\lim_{m\to\infty}\tau_m= \infty$. Since $\{\ue_n\}$ is bounded in  $\HL(y^{\alpha},\C)$,  \eqref{eq:IneyInc} holds for $n\in\mathbb{N}_0$ and $\tr \ue^*\in L^\infty(\Omega)$, we observe by Lemma \ref{lemma:Mosco} that
\begin{equation*}
\mathcal{K}^{\tau_n}(\ue_n)\to \mathcal{K}({\ue}),
\end{equation*}
in the sense of Mosco, where $\ue_n\rightharpoonup {\ue}$ (along the entire sequence) in  $\HL(y^{\alpha},\C)$ as $n\to \infty$. Hence, $\ue_{n+1}=S(\mathcal{K}^{\tau_n}(\ue_n))\to S(\mathcal{K}({\ue}))$ in  $\HL(y^{\alpha},\C)$ as $n\to\infty$. This implies that ${\ue}=S(\mathcal{K}({\ue}))$, i.e., ${\ue}$ solves  \eqref{local:QVI} and that $\ue_n\to {\ue}$ in  $\HL(y^{\alpha},\C)$ as $n\to \infty$.

If $\lim_{m\to\infty}\tau_m= \tau^*<+\infty$, then again  by Lemma \ref{lemma:Mosco} we have by the same argument in the previous paragraph that
\begin{equation*}
\mathcal{K}^{\tau_n}(\ue_n)\to \mathcal{K}^{\tau^*}({\ue}),
\end{equation*}
and consequently $\ue_n\to {\ue}$ in  $\HL(y^{\alpha},\C)$ as $n\to \infty$, where ${\ue}|_{\C_{\tau^*}}$ solves \eqref{trunc:QVI}.
\end{proof}

\section{Numerical realization}\label{s:numerics}
Before proceeding further, we shall elaborate on the Step~\ref{step1} of Algorithm~\ref{Algorithm}. In practice, we consider $\tau_n=\tau$ for all $n$ and some large enough $\tau$ in  Algorithm~\ref{Algorithm}; this is a fixed point iteration to approximate a solution to \eqref{trunc:QVI}. If $\{\ue_{n}\}$ is the sequence generated, the stopping criterion considered is satisfied, for this fixed point iteration, as soon as 
\begin{equation}\label{eq:eps1}
\frac{\| \ue_{n+1} - \ue_{n} \|_{\HL(y^\alpha,\C_\tau)}}{\| \ue_{n+1} \|_{\HL(y^\alpha,\C_\tau)}} < \varepsilon_1 ,
\end{equation}
or $n = n_{\max}$. 


As a sub-step, in this fixed point iteration, we need solution to a variational inequality. We employ a Semismooth Newton Algorithm with Regularization, see \cite[pp. 248]{MR2441683} for details:
\begin{algorithm}[H]
\caption{Semismooth Newton Algorithm with Regularization}
\label{algo:ssn}
\begin{algorithmic}[1]
\State \textbf{Input}: $\overline{\mu}$, $\theta$, $\Psi = \Psi(\tr\ue_n)$, $k_{\max}$ and set $k = 0$, $\tau = \tau_n$, $\ue_k = \ue_{n}$ 
\State \textbf{Output}: $\ue_{k+1}$
\State {\bf repeat}  
\State \quad Set $\mathcal{A}_k = \{ x : \left(\overline\mu+\theta(\tr \ue_k - \Psi)(x) > 0 \right) \}$, $\mathcal{I}_k = \Omega \setminus \mathcal{A}_k$. 
\State \label{algo:ssn_step} \quad Solve $\ue_{k+1} \in \HL(y^{\alpha},\C_\tau)$:
       \[
         a(\ue ,\ve) + \left( \overline{\mu} + \theta (\tr \ue - \Psi), \chi_{\mathcal{A}_k} \tr\ve \right)_{L^2(\Omega)} = (\fsf,\tr\ve)_{L^2(\Omega)} ,
       \]
       \quad for all $\ve \in \HL(y^{\alpha},\C_\tau)$.
\State \quad Set 
       \[
        \mu_{k+1} = \left\{\begin{array}{ccc}
                            0 & \mbox{on} & \mathcal{I}_k ,\\
                            \overline{\mu} + \theta (\tr \ue_{k+1} - \Psi) & \mbox{on} & \mathcal{A}_k
                           \end{array}        
                    \right.
       \]
\State \quad $k = k+1$       
\State \textbf{until} some stopping rule is satisfied.       
\end{algorithmic}
\end{algorithm}
The above algorithm converges superlinearly for any $\theta$ 
       provided that iterations are initiated close enough to a solution of the regularized variational inequality (see Theorem 8.25 in \cite{MR2441683}). Further, the sequence of solutions converge, as $\theta\to\infty$, strongly to the solution of the original variational inequality of interest (see Theorem 8.26 in \cite{MR2441683}).

If $\{ \ue_{k+1} \}$ is the sequence generated in Step~\ref{algo:ssn_step} of Algorithm~\ref{algo:ssn}, then the stopping criterion considered is satisfied as soon as: $\mathcal{A}_{k+1} = \mathcal{A}_{k}$ or $$\frac{\| \ue_{k+1} - \ue_{k} \|_{\HL(y^\alpha,\C_\tau)}}{\| \ue_{k} - \ue_{k-1} \|_{\HL(y^\alpha,\C_\tau)}} < \varepsilon_2,$$ or $k = k_{\max}$.

\subsection{Discretization}

The discretization of the linear equation \eqref{pde} was introduced and analyzed in \cite{RHNochetto_EOtarola_AJSalgado_2014a}, see also \cite{MR3393323} for an application to the fractional obstacle problem. Owing to the singular behavior of the solution towards $\Omega \times \{0\}$ it is preferable to use anisotropic meshes, to compensate the singular behavior. In our context such meshes can be defined as follows: Let $\mathscr{T}_\Omega=\{E\}$ be a conforming and quasi-uniform triangulation of $\Omega$, where $E\subset  \mathbb{R}^n$ is an element that is isoparametrically equivalent to either to the unit cube or to the unit simplex in $\mathbb{R}^n$. We assume $\# \mathscr{T}_\Omega \propto M^n$. Thus, the element size $h_{\mathscr{T}_\Omega}$ fulfills $h_{\mathscr{T}_\Omega}\propto M^{-1}$. Furthermore, let $\mathcal{I}_\tau=\{I_k\}_{k=0}^{M-1}$, where $I_k = [y_k,y_{k+1}]$, be a graded mesh of the interval $[0,\tau]$ in the sense that $[0,\tau]=\bigcup_{k=0}^{M-1} I_k$ with
\[
y_k=\left(\frac{k}{M}\right)^\gamma\tau,\quad k=0,\ldots,M,\quad \gamma>3/(1-\alpha)=3/(2s)>1.
\]
We construct the triangulations $\mathscr{T}_\tau$ of the cylinder $\mathcal{C}_\tau$ as tensor product triangulations by using $\mathscr{T}_\Omega$ and $\mathcal{I}_\tau$. Let $\mathbb{T}$ denotes the collection of such 
anisotropic meshes $\mathscr{T}_\tau$.

For each $\mathscr{T}_\tau \in \mathbb{T}$ we define the finite element space $\mathbb{V}(\mathscr{T}_\tau)$ as 
\[
\mathbb{V}(\mathscr{T}_\tau):=\{V\in C^0(\overline{ \mathcal{C}_\tau}):V|_{T}\in\mathcal{P}_1(E)\oplus\mathbb{P}_1(I)\ \forall T=E\times I\in \mathscr{T}_\tau,\ V|_{ \partial_L \C_{\tau}}=0\}.
\]
In case $E$ is a simplex then $\mathcal{P}_1(E)=\mathbb{P}_1(E)$, the set of polynomials of degree at most $1$. If $E$ is a cube then $\mathcal{P}_1(E)$ equals $\mathbb{Q}_1(E)$, the set of polynomials of degree at most 1 in each variable. In our numerical illustrations we shall work with simplices. 


For our numerical examples we consider $n = 2$, $\Omega = (0,1)^2$, $c(x) = 0$, and $A(x) = 1$ in \eqref{trunc:QVI}. We set the force $\fsf(x_1,x_2) = x_1(1-x_1)x_2(1-x_2)$ in first three examples and $\fsf=1$ in the final example. In Algorithm~\ref{Algorithm} we choose a fixed ``large" $\tau$ defined as $\tau = 1 + \frac{1}{3}\log(\#\mathscr{T}_\Omega)$. Such a choice is motivated by the linear equation where it leads to error balance between the truncation $(\tau)$ and the finite element approximation; see \cite[Remark 5.5]{RHNochetto_EOtarola_AJSalgado_2014a}. We further set total degrees of freedom of $\mathscr{T}_\tau$ equal to 12716.

Next we shall study four examples. In all cases we set $\varepsilon_1 = 5e-4$, $n_{\max} = 150$ (see  \eqref{eq:eps1}). Moreover, we set $\varepsilon_2 = 1e-2$, $k_{\max} = 10$ and $\overline{\mu} = 0$ in Algorithm~\ref{algo:ssn}. We notice that the total number of iterations, using a continuation technique for the parameter $\theta$, remained stable under mesh refinements (cf. Algorithm~\ref{algo:ssn}). In our computations, we set $\theta_{\max} = 1e+10$ and increase $\theta$, starting with $10$, such that the ratio between two consecutive values of $\theta$ is 1.5.

Note that the $\Psi$ maps in Examples 1 and 2, both satisfy $(i)$ in Assumption \ref{ass1}, so existence of solutions for the QVI problems is guaranteed. Further, while {$\Psi$ in} Example 2 satisfies Assumption \ref{ass2}, Example 1 only satisfies $(i)$ in Assumption \ref{ass2}. 
Also, Example~3 satisfies Assumption \ref{ass1} and $(i)$ in Assumption \ref{ass2}.
Finally, $\Psi$ in Example 4 has the same structure as Example~2.

\subsection{Example 1}\label{s:ex1}

We first consider the case where the obstacle $\Psi(\usf)$ is given by 
\[
 \Psi(\usf)(x_1,x_2) = 5\left(\sin(x_1) \usf(x_1,x_2)\right)^+ + \delta, 
\]
where $\delta = 1e-10$.
Figures~\ref{f:ex1_sol}, \ref{f:ex1_obs}, and \ref{f:ex1_act} illustrate the final solution, the obstacle, and the active set respectively for different values of $s = 0.2, 0.4, 0.6$, and $s = 0.8$. We clearly notice the solution dependence on $s$. In each case we observe that it takes between 5 to 10 iterations for Algorithm~\ref{algo:ssn} to converge. On the other hand it takes $n = 49, 47, 44$ and $n=42$ iterations for us to achieve the criterion in \eqref{eq:eps1} for $s = 0.2, 0.4, 0.6$ and $s = 0.8$ respectively.

\begin{figure}[h!]
\includegraphics[width=0.45\textwidth]{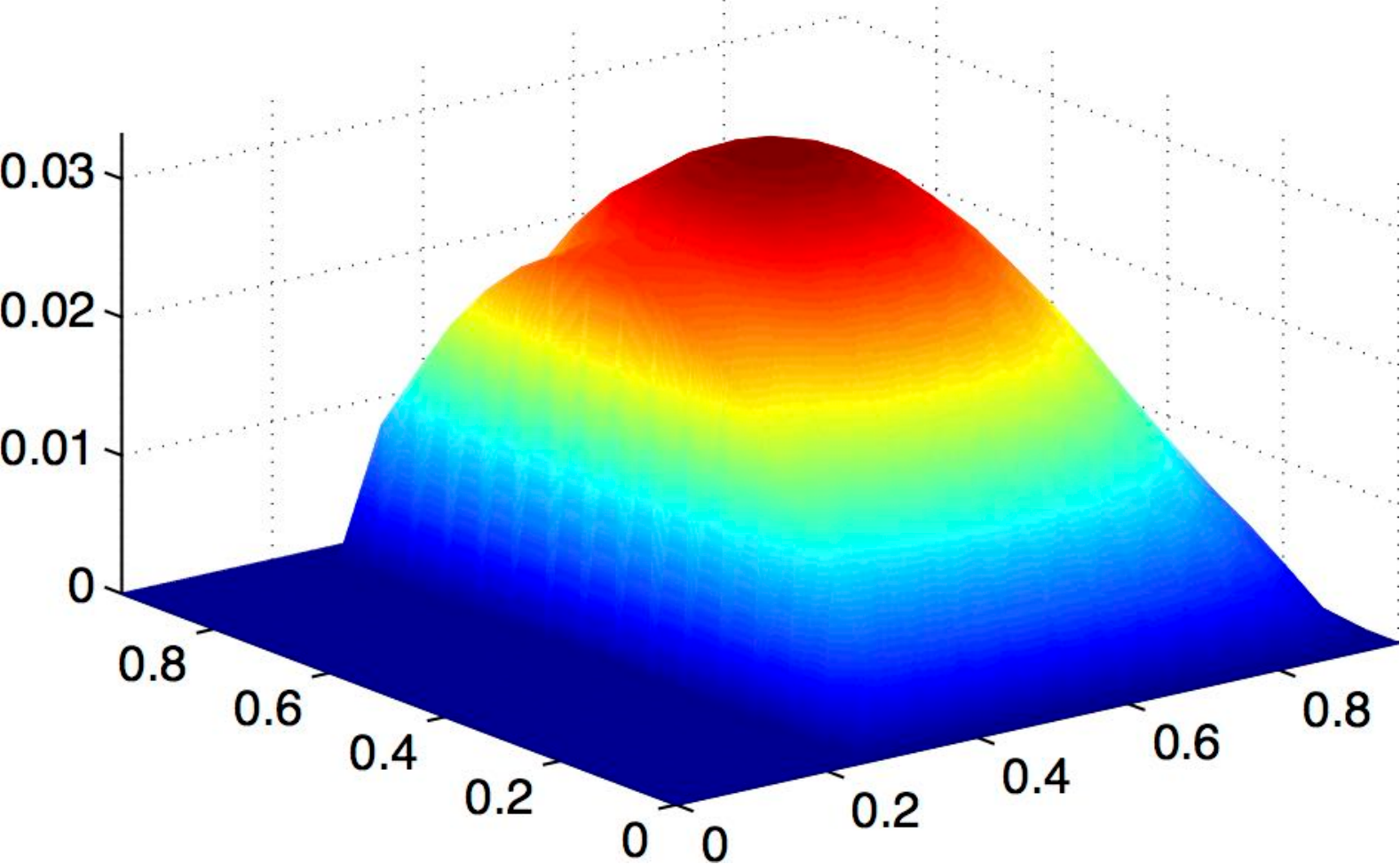}
\includegraphics[width=0.45\textwidth]{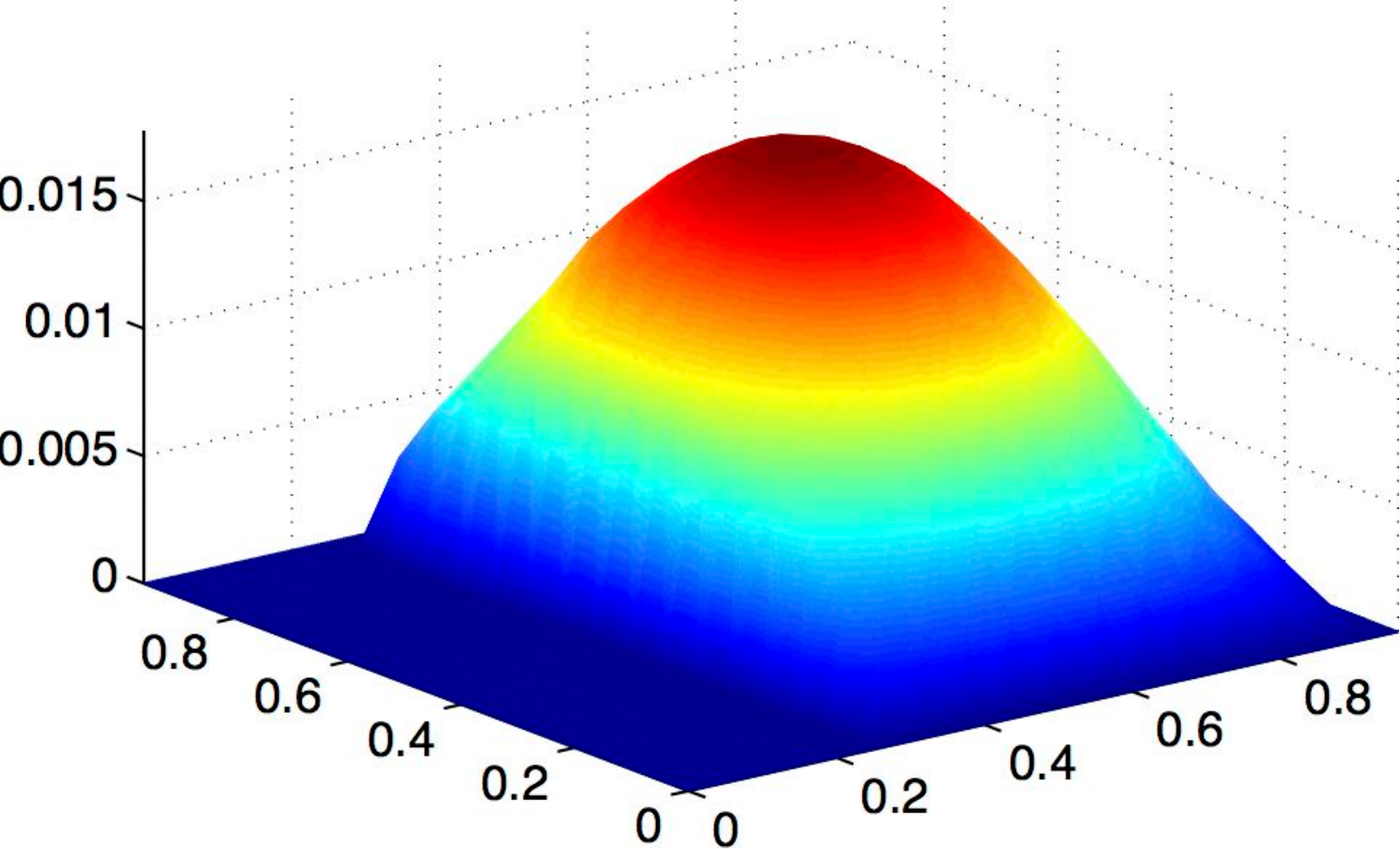}
\includegraphics[width=0.45\textwidth]{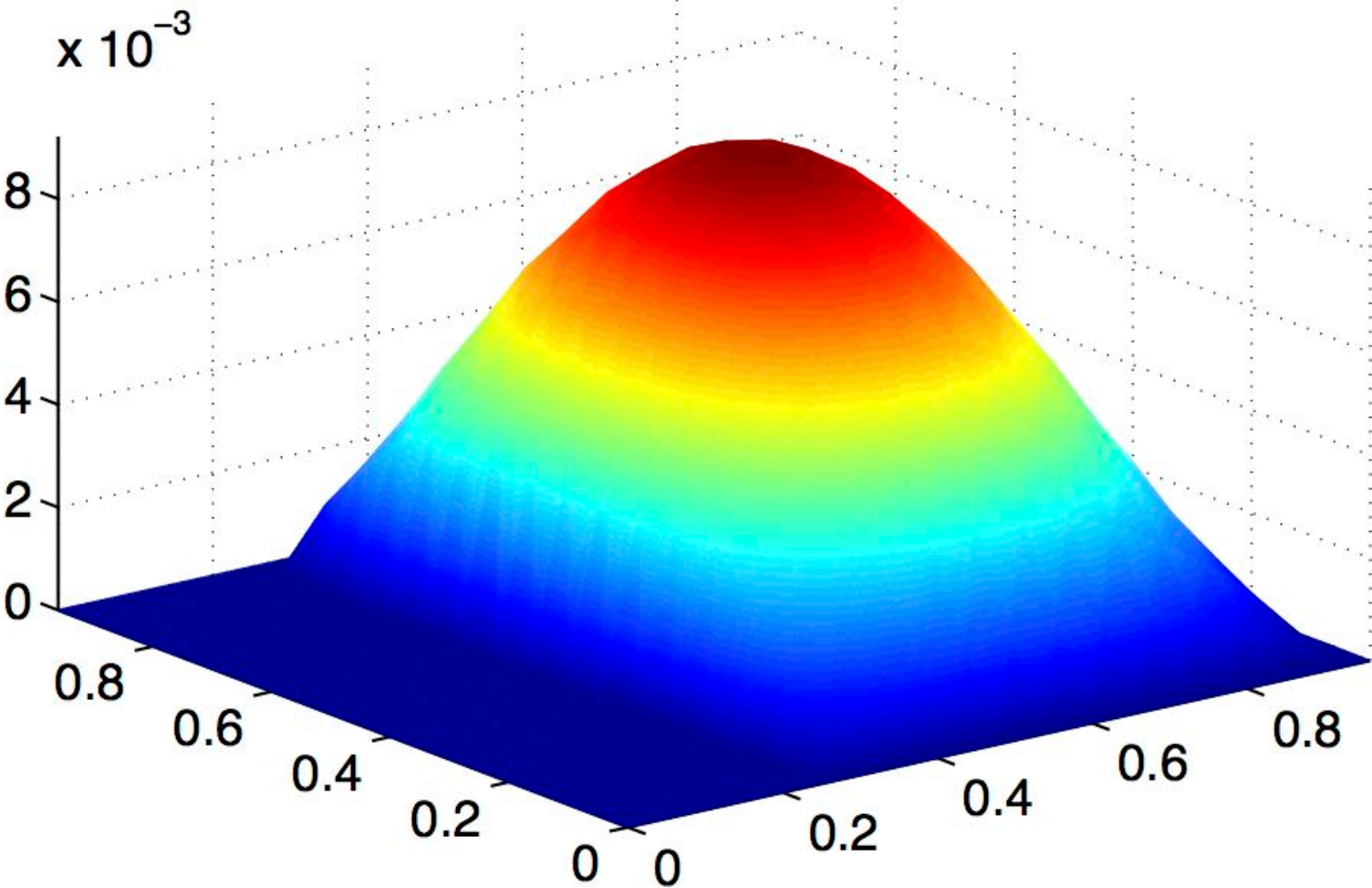}\quad\quad\quad
\includegraphics[width=0.45\textwidth]{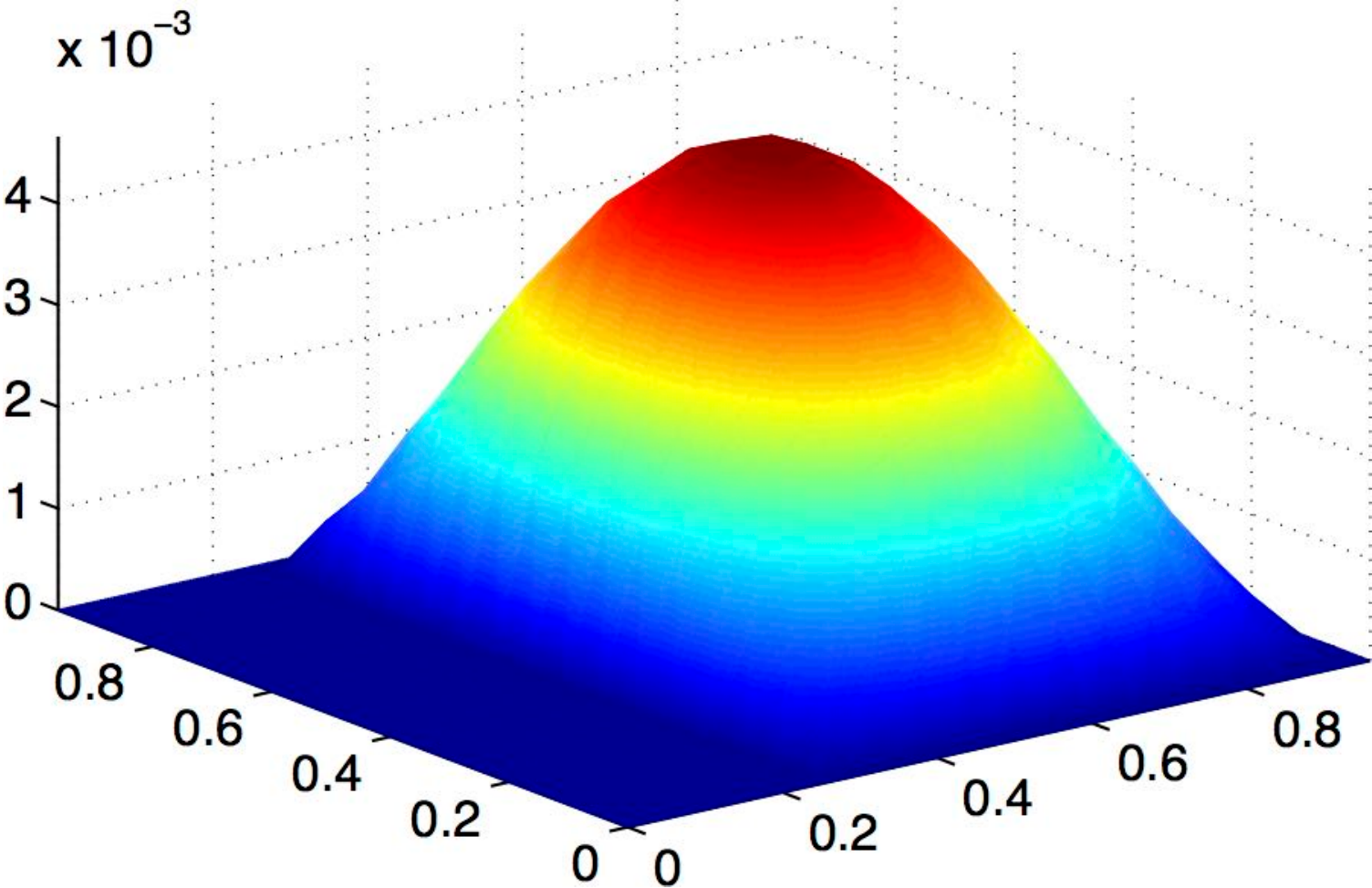}
\caption{\label{f:ex1_sol}Example 1: The top panels illustrate the solution for $s = 0.2$ (left) and $s =0.4$ (right). On the other hand, the bottom panels show the solutions when $s = 0.6$ (left) and $s = 0.8$ (right). The dependence on $s$ is clearly visible.}
\end{figure}

\begin{figure}[h!]
\includegraphics[width=0.45\textwidth]{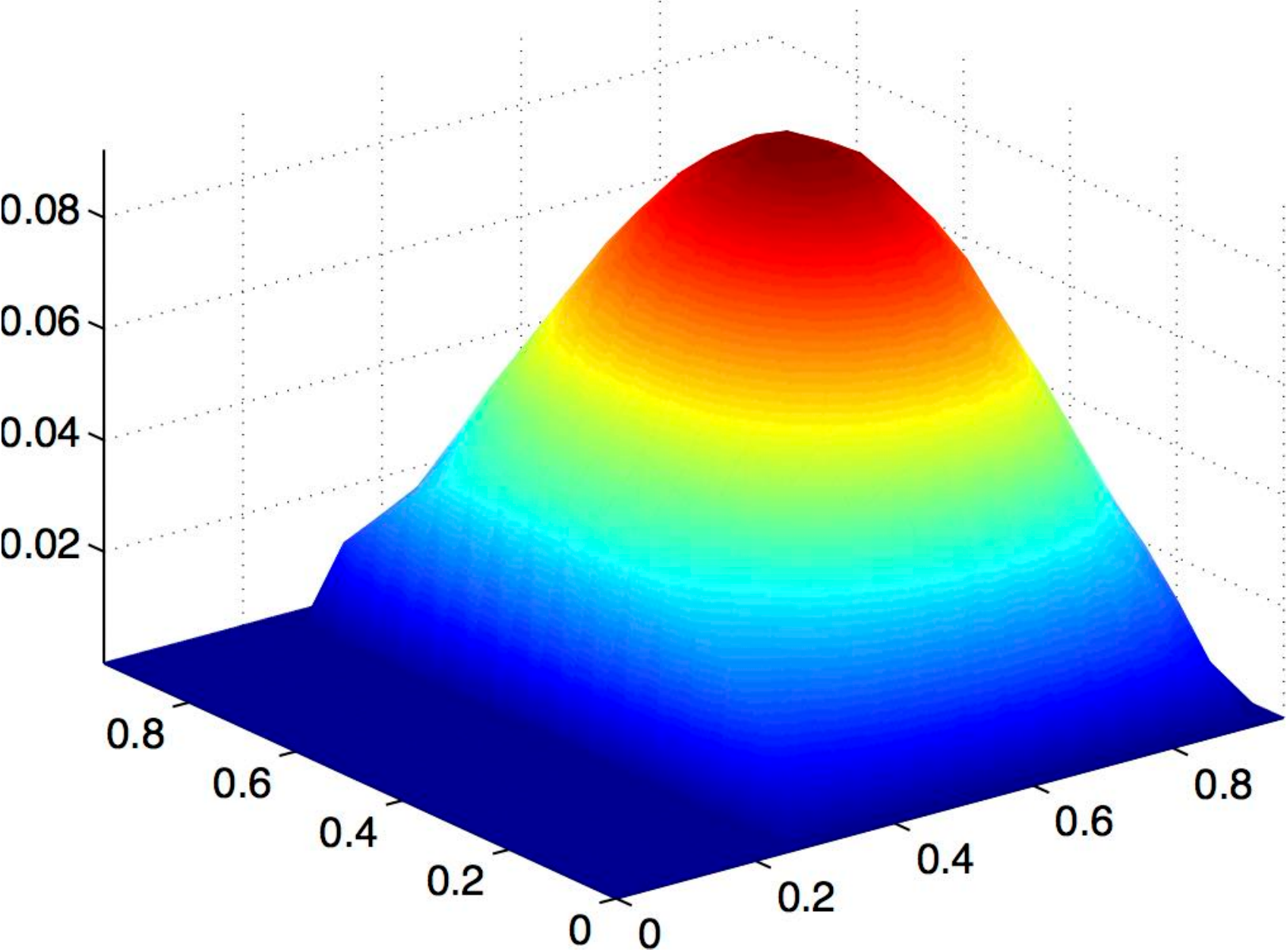}
\includegraphics[width=0.45\textwidth]{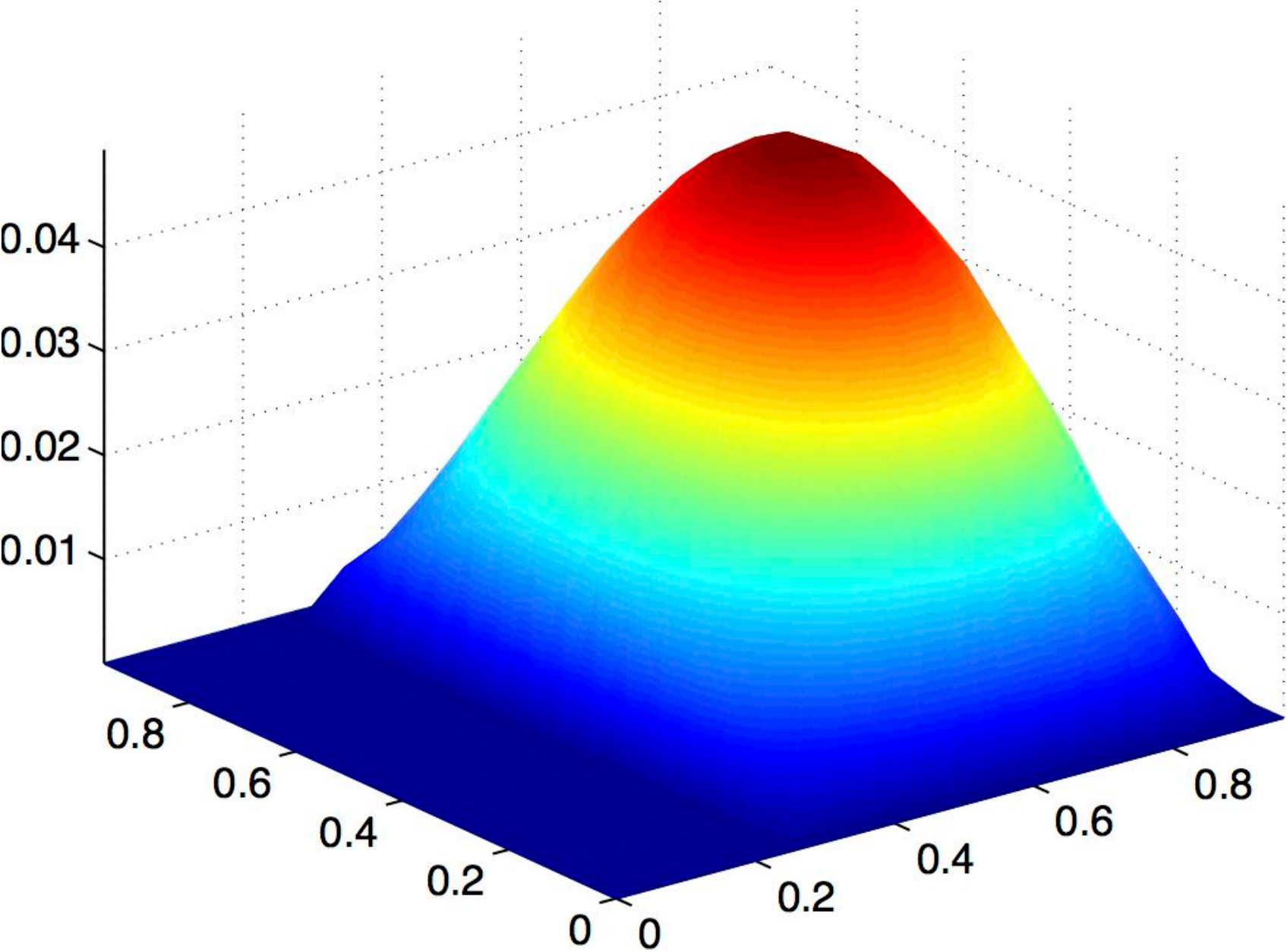}
\includegraphics[width=0.45\textwidth]{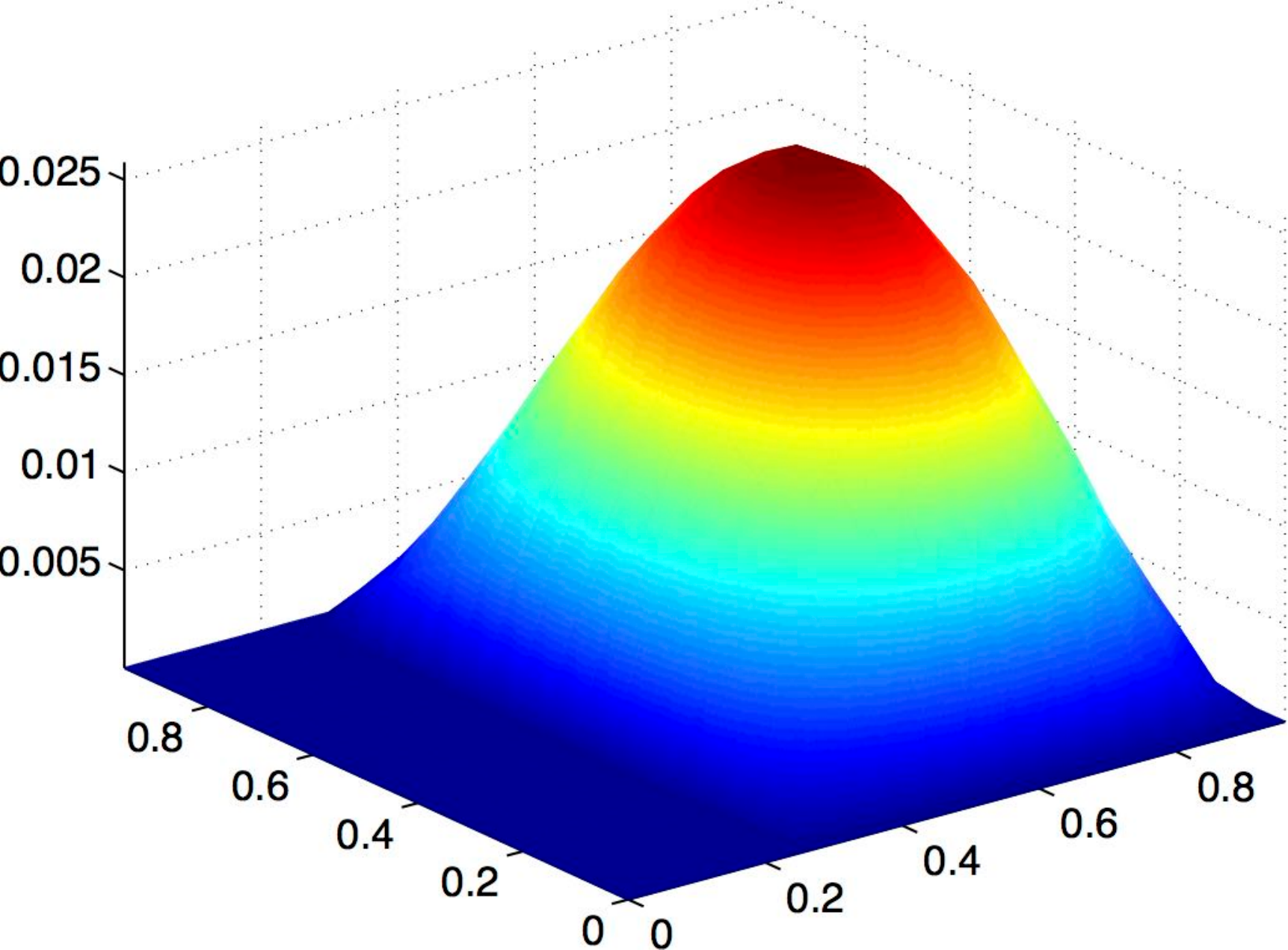}\quad\quad\quad
\includegraphics[width=0.45\textwidth]{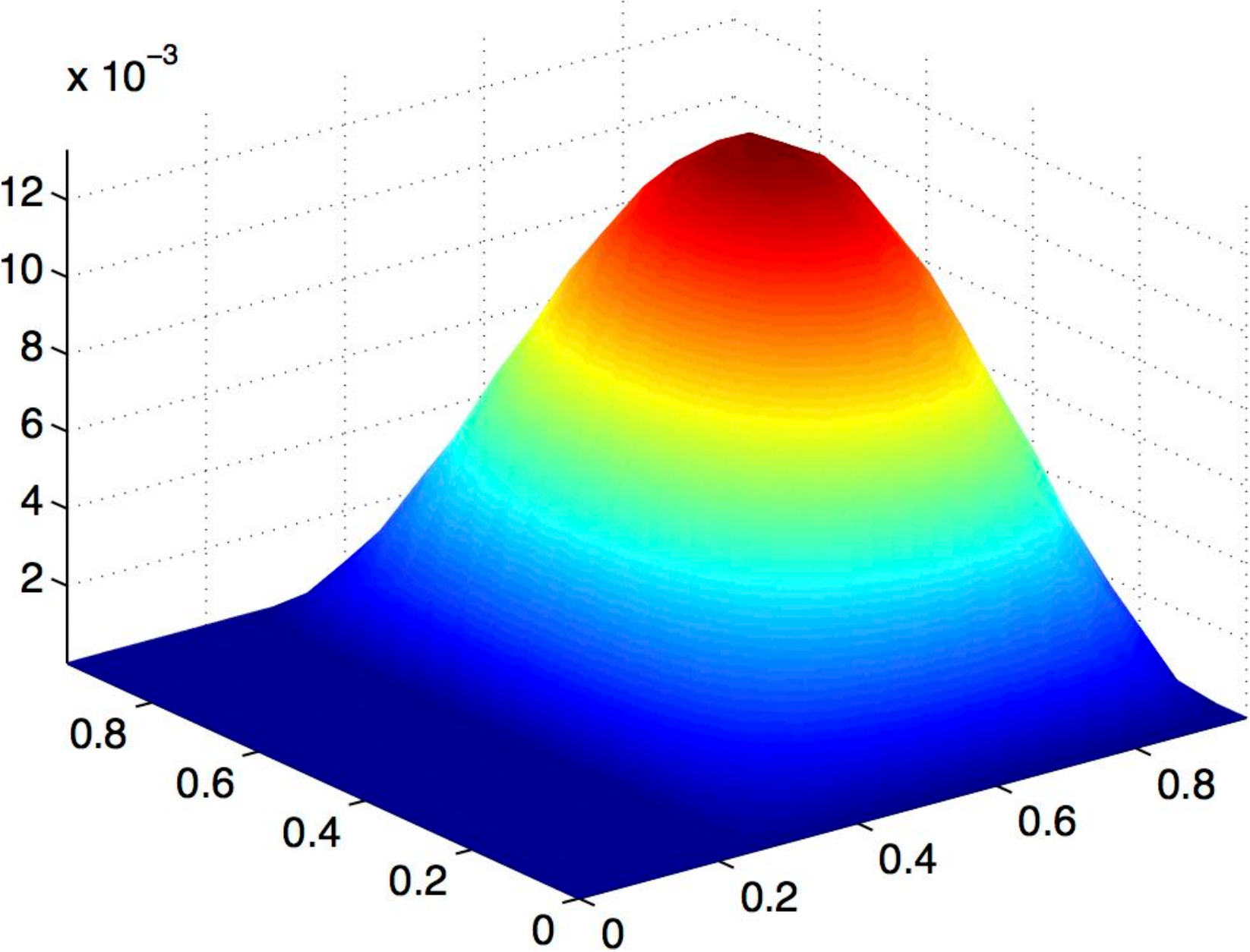}
\caption{\label{f:ex1_obs}Example 1: The top panels illustrate the obstacle $\Psi$ for $s = 0.2$ (left) and $s =0.4$ (right). On the other hand, the bottom panels show the obstacle when $s = 0.6$ (left) and $s = 0.8$ (right). The dependence on $s$ is apparent.}
\end{figure}

\begin{figure}[h!]
\includegraphics[width=0.245\textwidth]{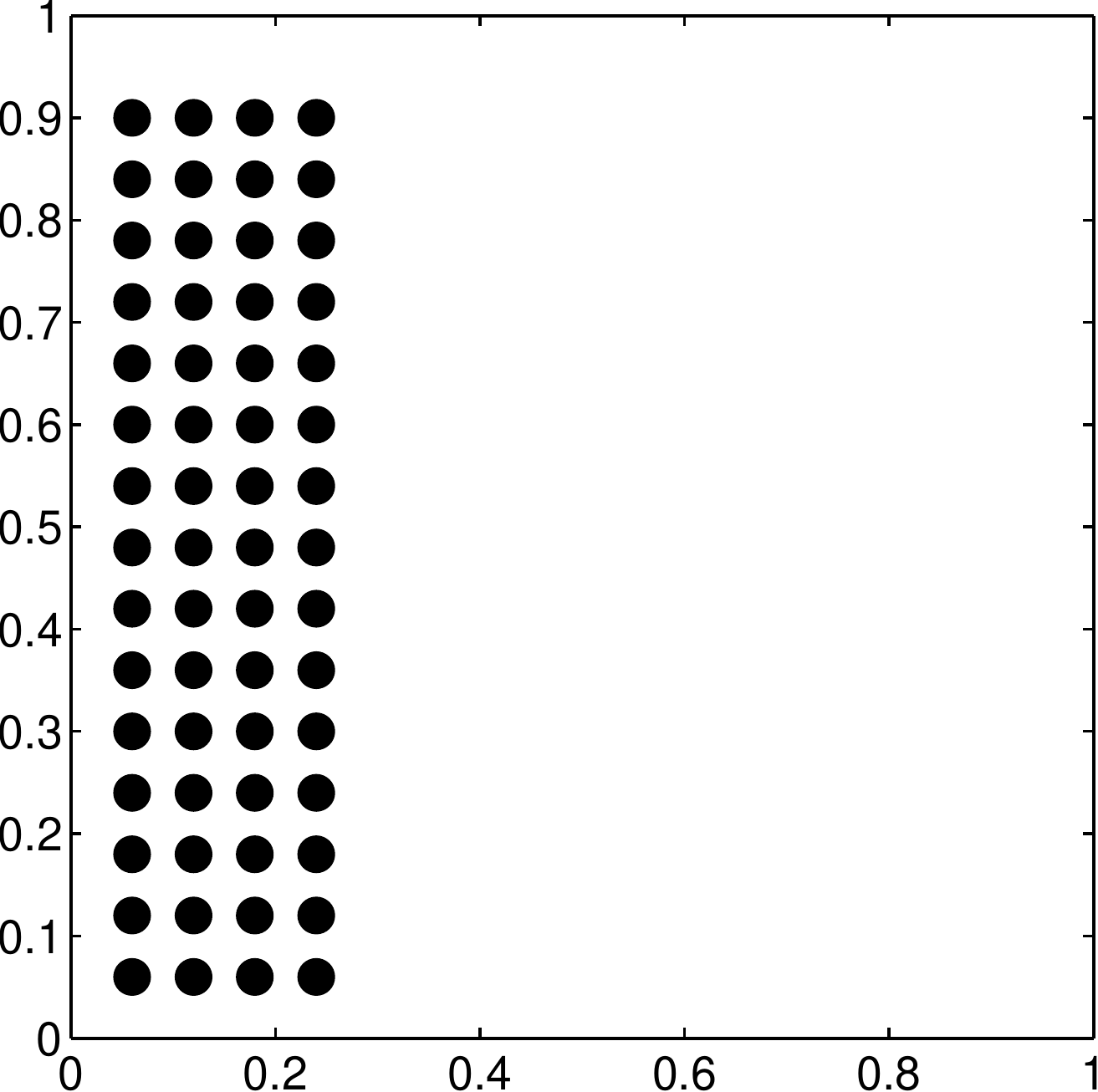}
\includegraphics[width=0.245\textwidth]{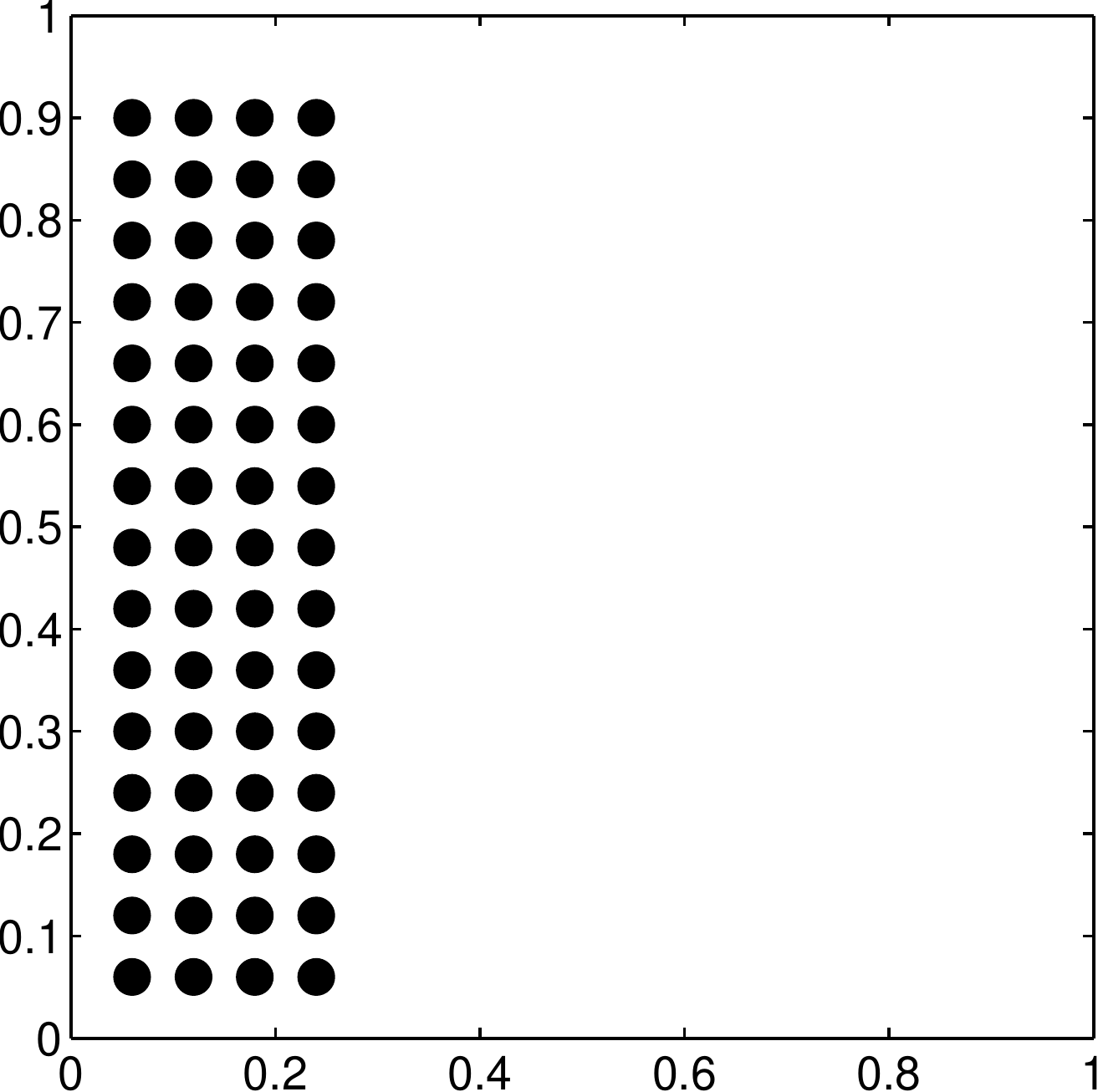}
\includegraphics[width=0.245\textwidth]{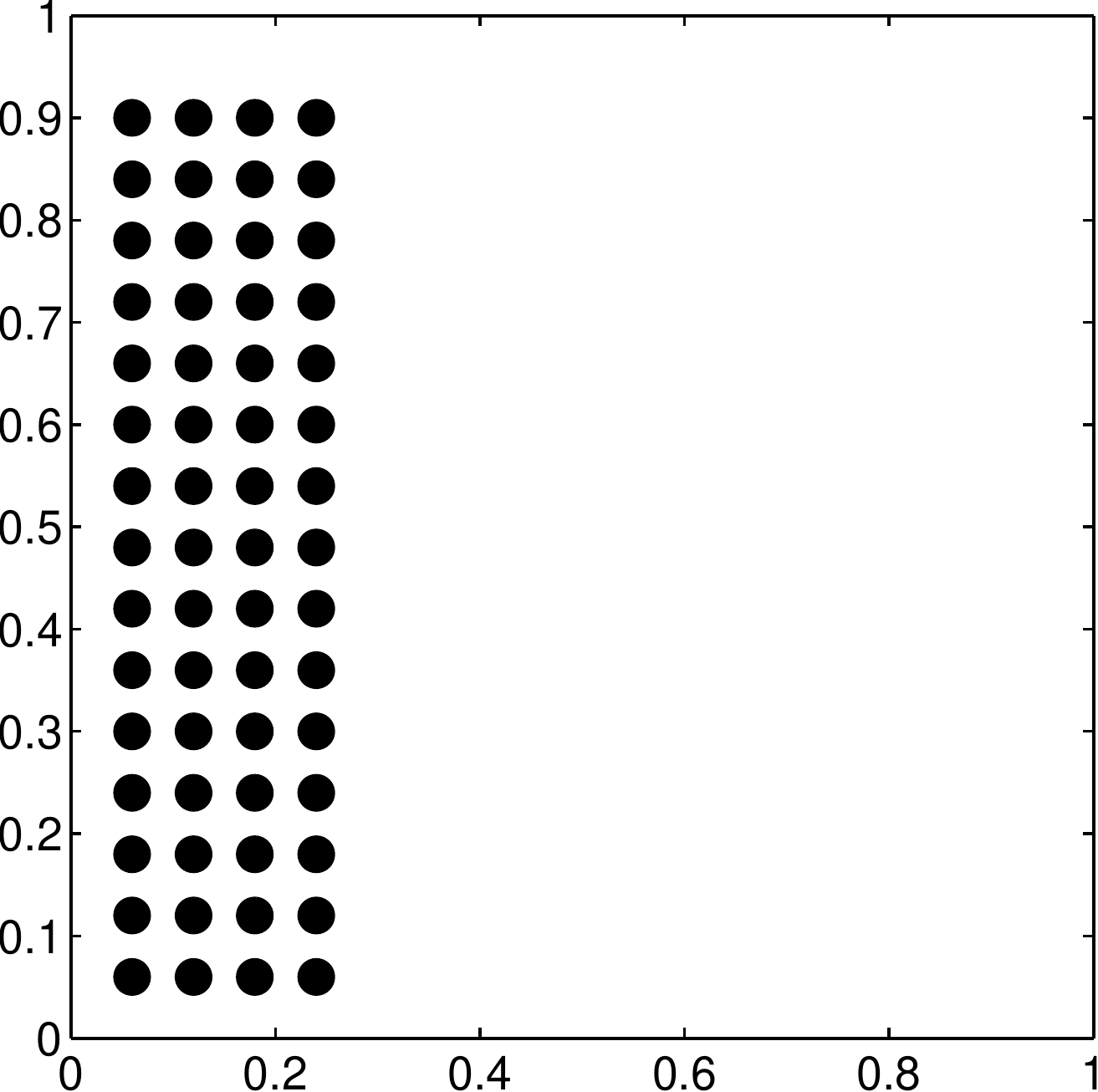}
\includegraphics[width=0.245\textwidth]{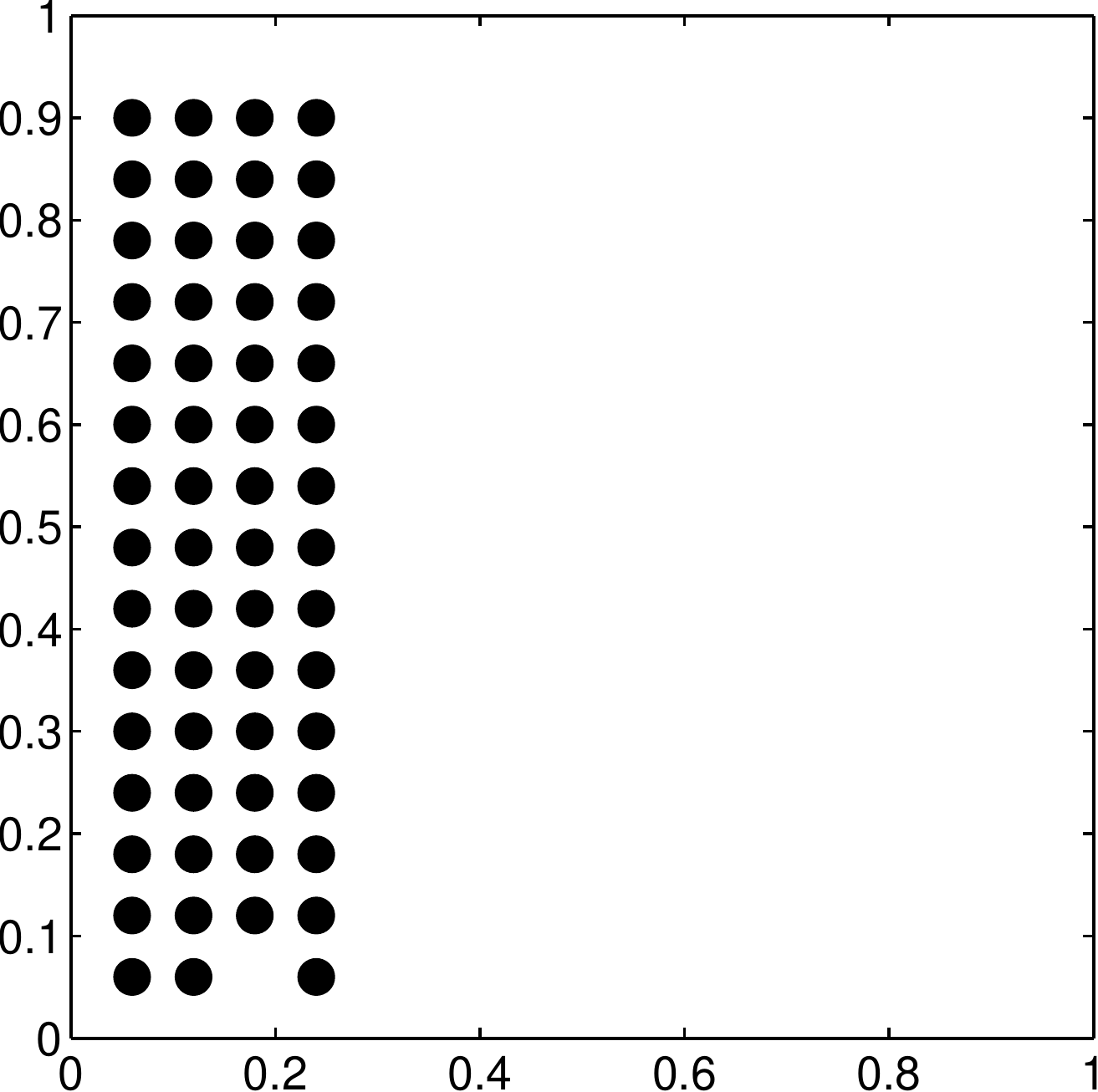}
\caption{\label{f:ex1_act}Example 1: The panels illustrate the active sets for $s = 0.2$, $s =0.4$, $s =0.6$, and $s =0.8$. 
In this example, the active sets for $s = 0.2$, $s =0.4$, $s =0.6$ look similar.}
\end{figure}

\subsection{Example 2}\label{s:ex2}

As a second example we take a nonlocal $\Psi$, i.e., we set
\[
 \Psi(\usf) = 2 \left|\int_\Omega \usf \ dx \right| + \delta , 
\]
where $\delta = 1e-10$.
Figure~\ref{f:ex2_sol} and \ref{f:ex2_act} illustrate the solution and the active set respectively for different values of $s = 0.2, 0.4, 0.6$, and $s = 0.8$. As the final $\Psi$ is a constant in this case so we decided not to plot it here. Again we clearly notice a different solution behavior with respect to $s$. We further notice that it takes between 5 to 10 iterations for Algorithm~\ref{algo:ssn} to converge. On the other hand it takes $n = 47, 48, 50, 52$ when $s = 0.2, 0.4, 0.6, 0.8$, respectively, for us to achieve the criterion in \eqref{eq:eps1}.

\begin{figure}[h!]
\includegraphics[width=0.45\textwidth]{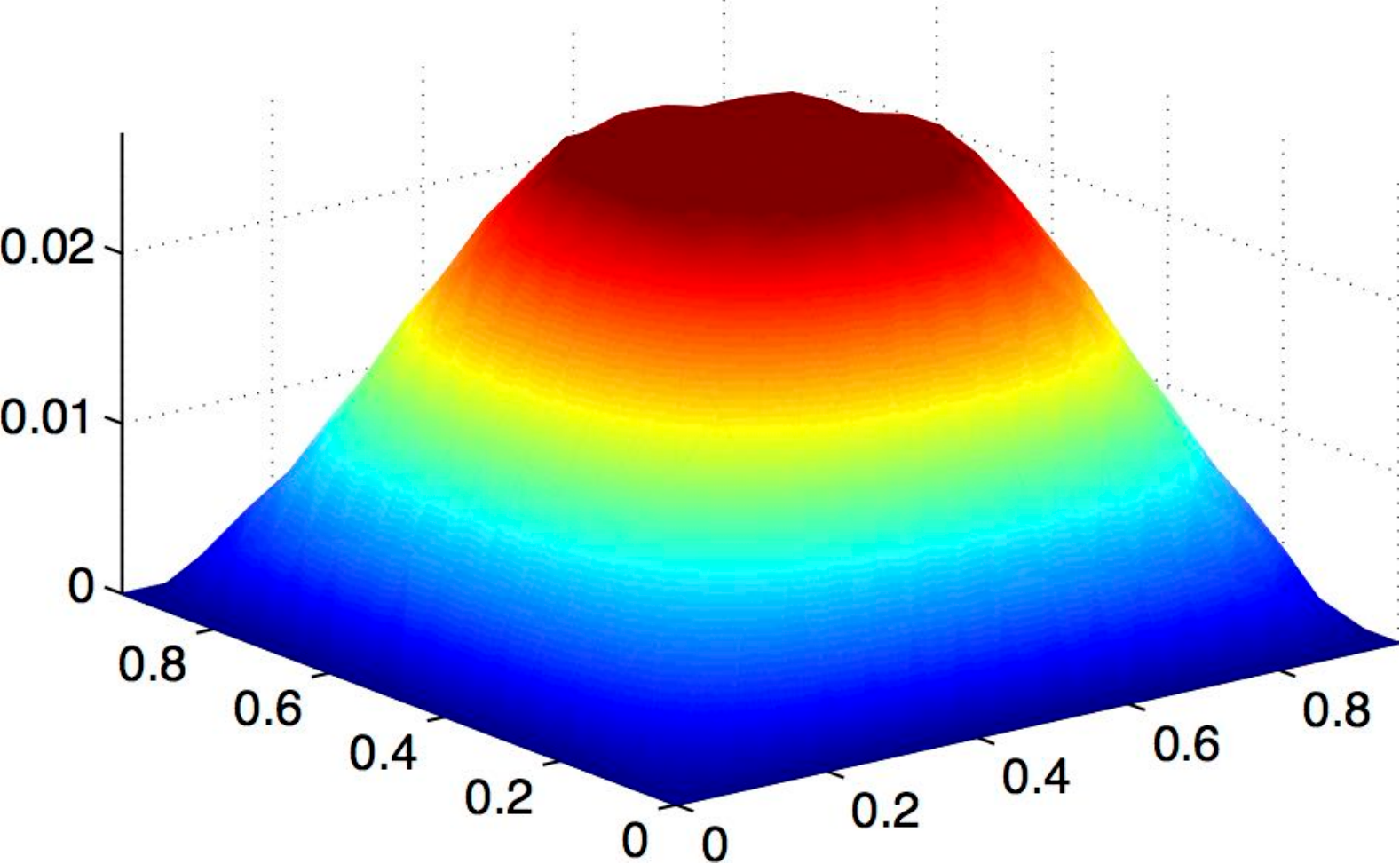}
\includegraphics[width=0.45\textwidth]{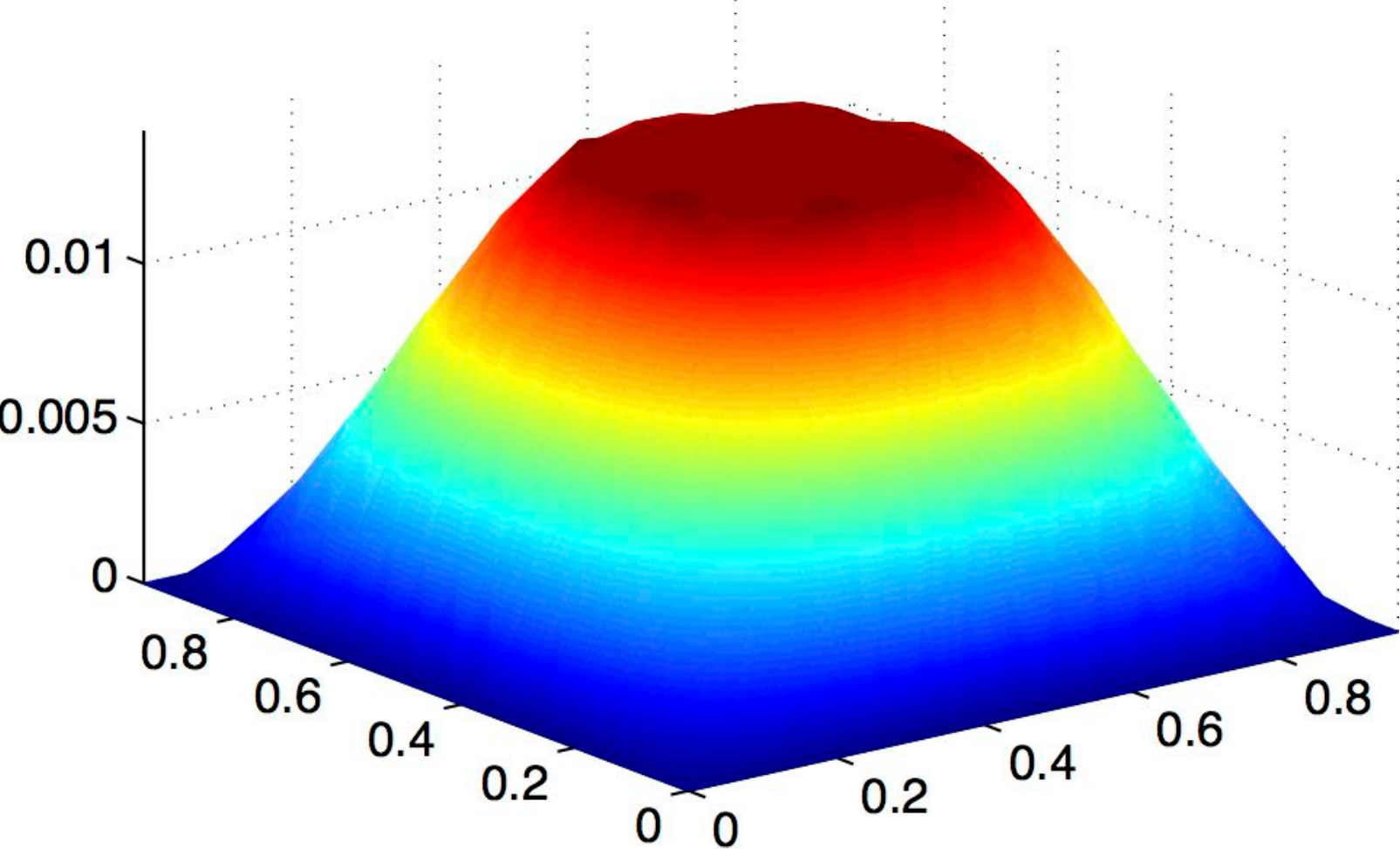}
\includegraphics[width=0.45\textwidth]{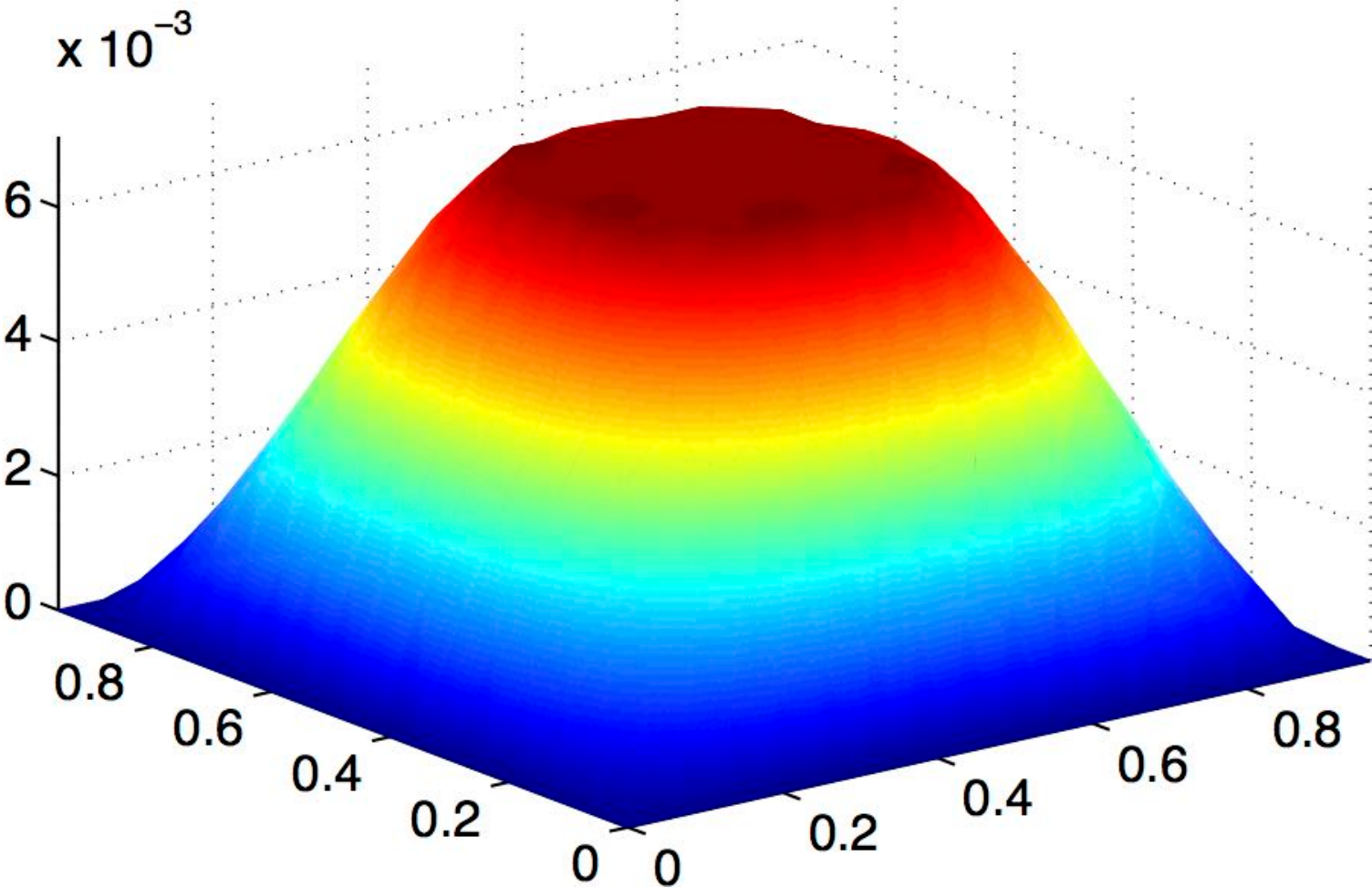}\quad\quad\quad
\includegraphics[width=0.45\textwidth]{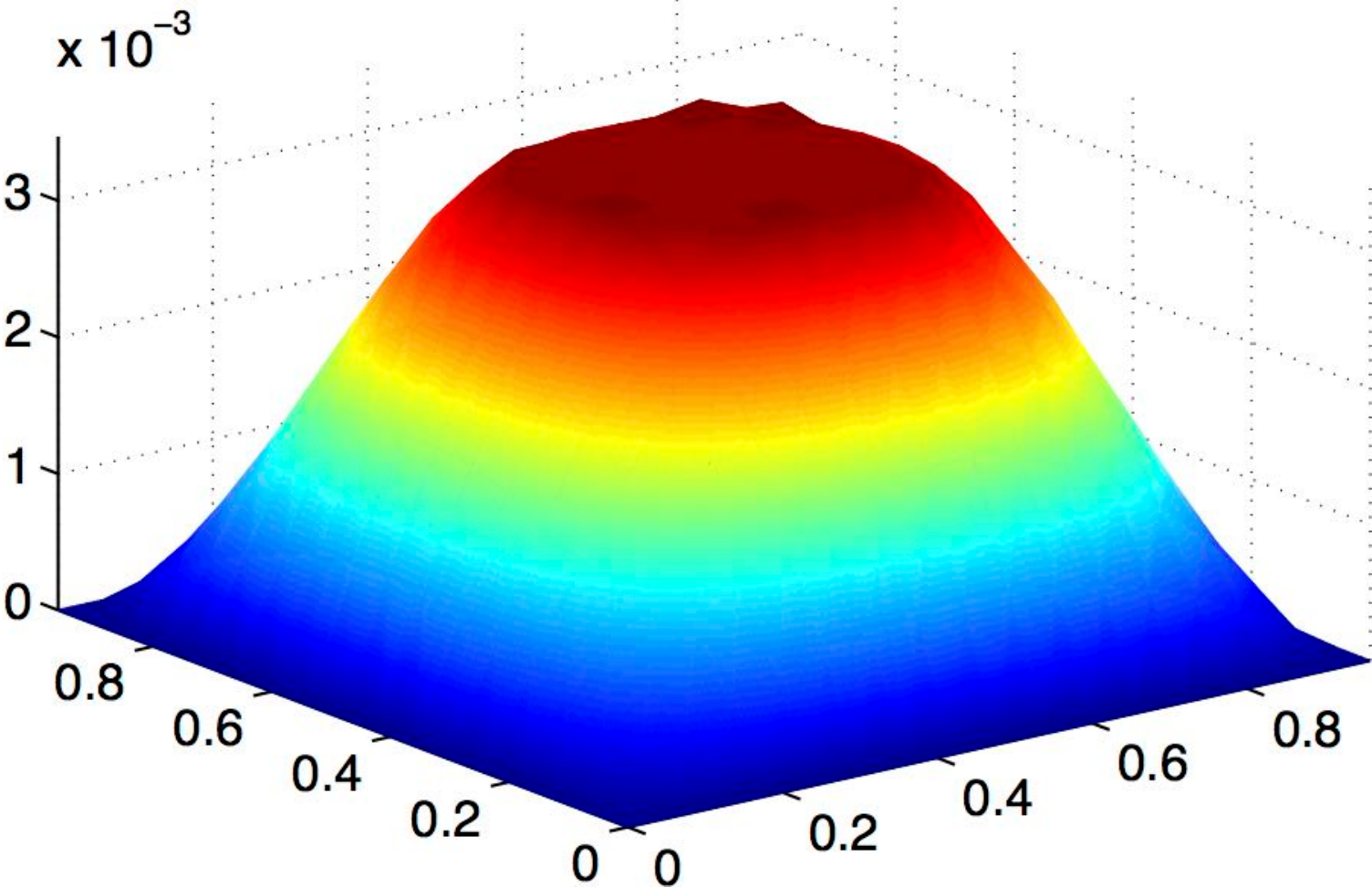}
\caption{\label{f:ex2_sol}Example 2: The top panels illustrate the solution for $s = 0.2$ (left) and $s =0.4$ (right). On the other hand, the bottom panels show the solutions when $s = 0.6$ (left) and $s = 0.8$ (right).}
\end{figure}

\begin{figure}[h!]
\includegraphics[width=0.245\textwidth]{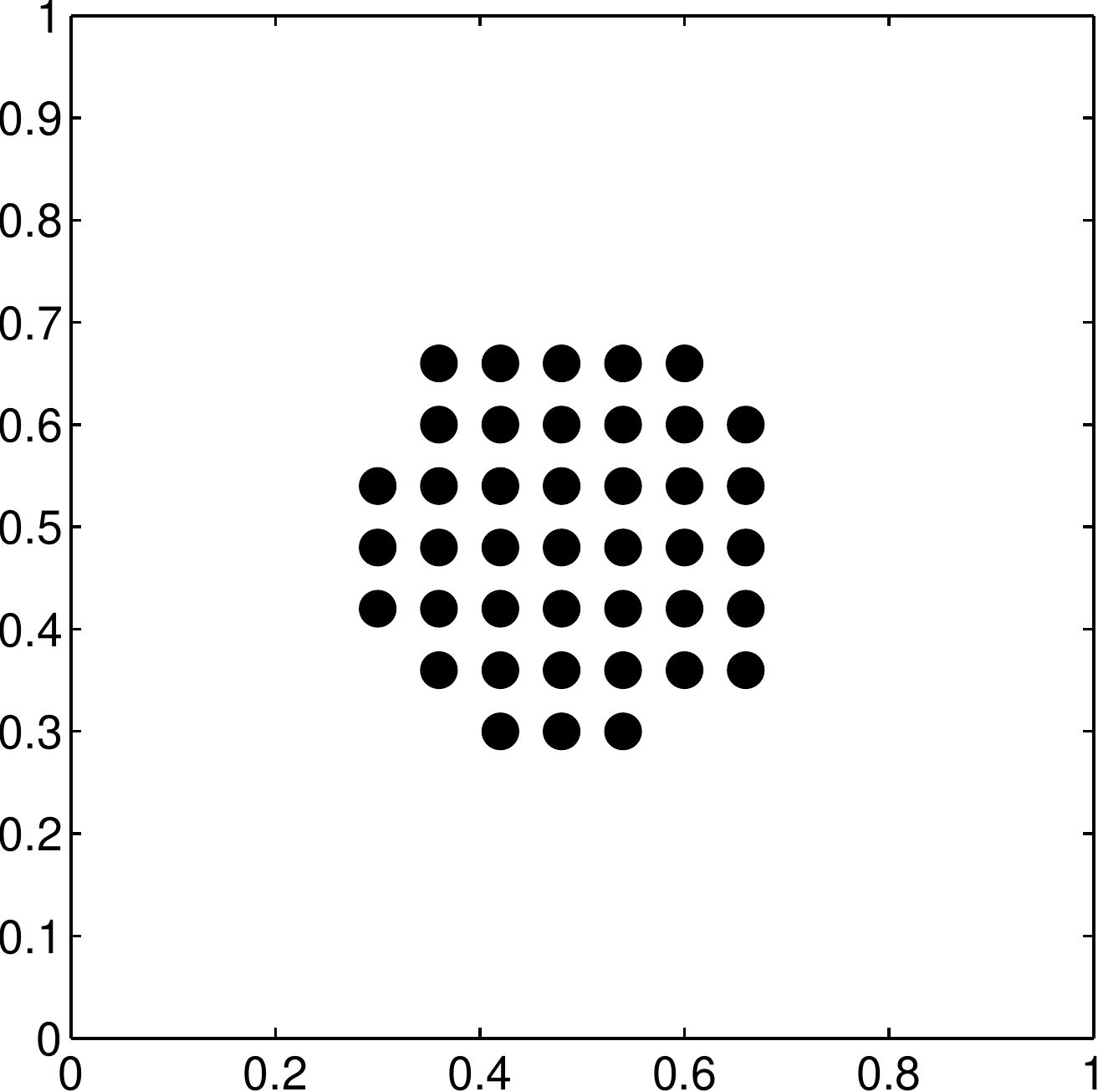}
\includegraphics[width=0.245\textwidth]{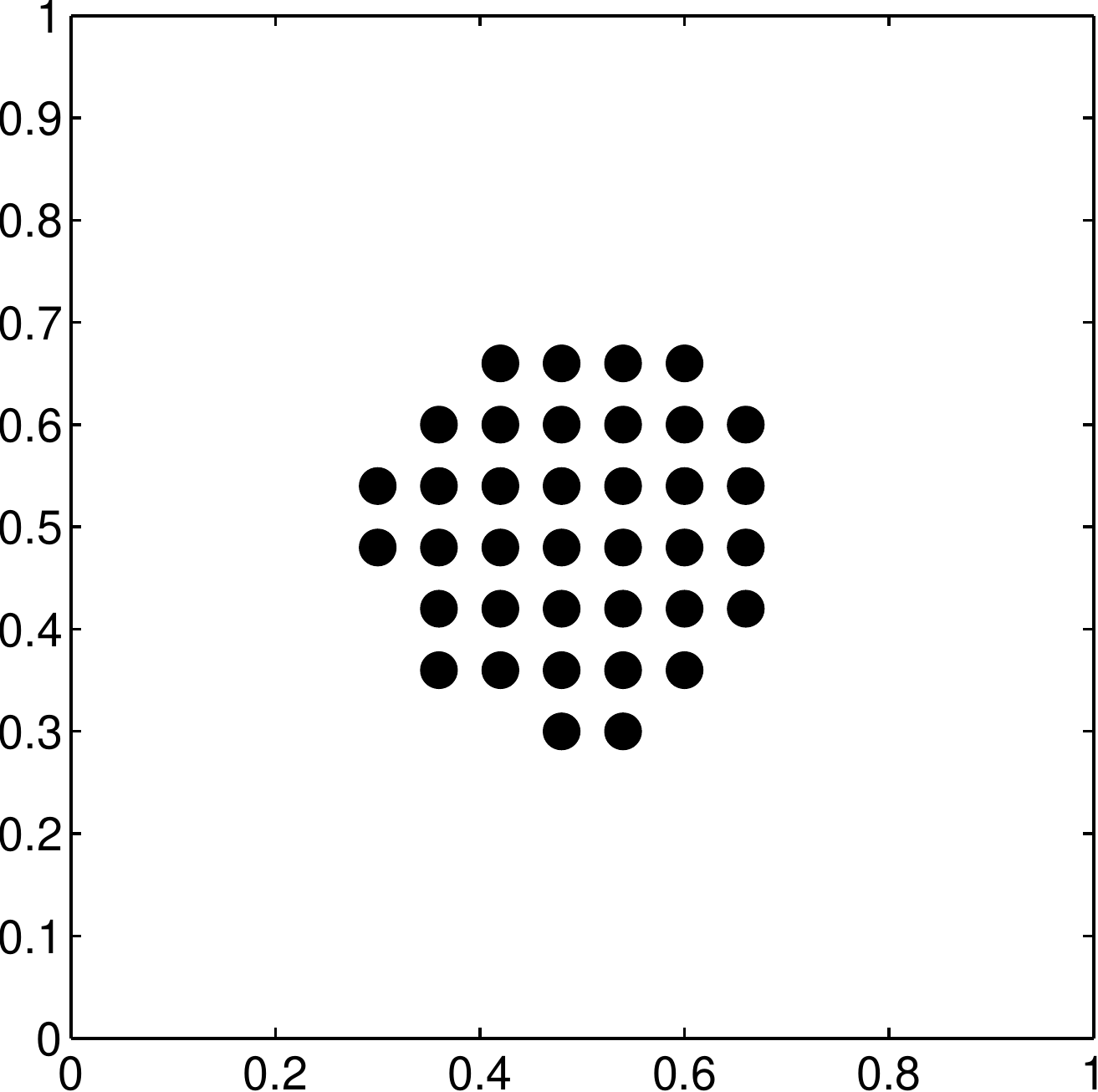}
\includegraphics[width=0.245\textwidth]{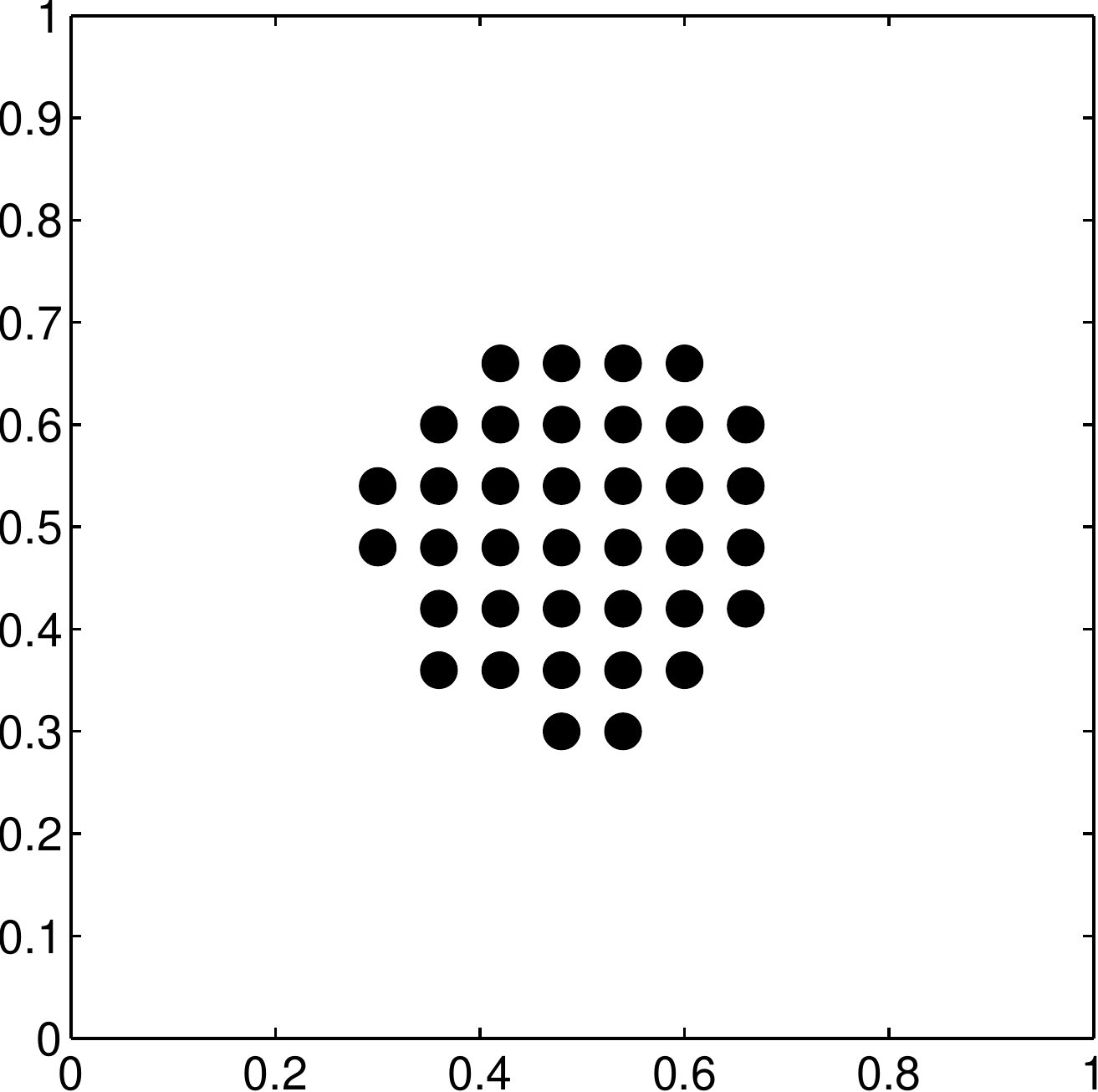}
\includegraphics[width=0.245\textwidth]{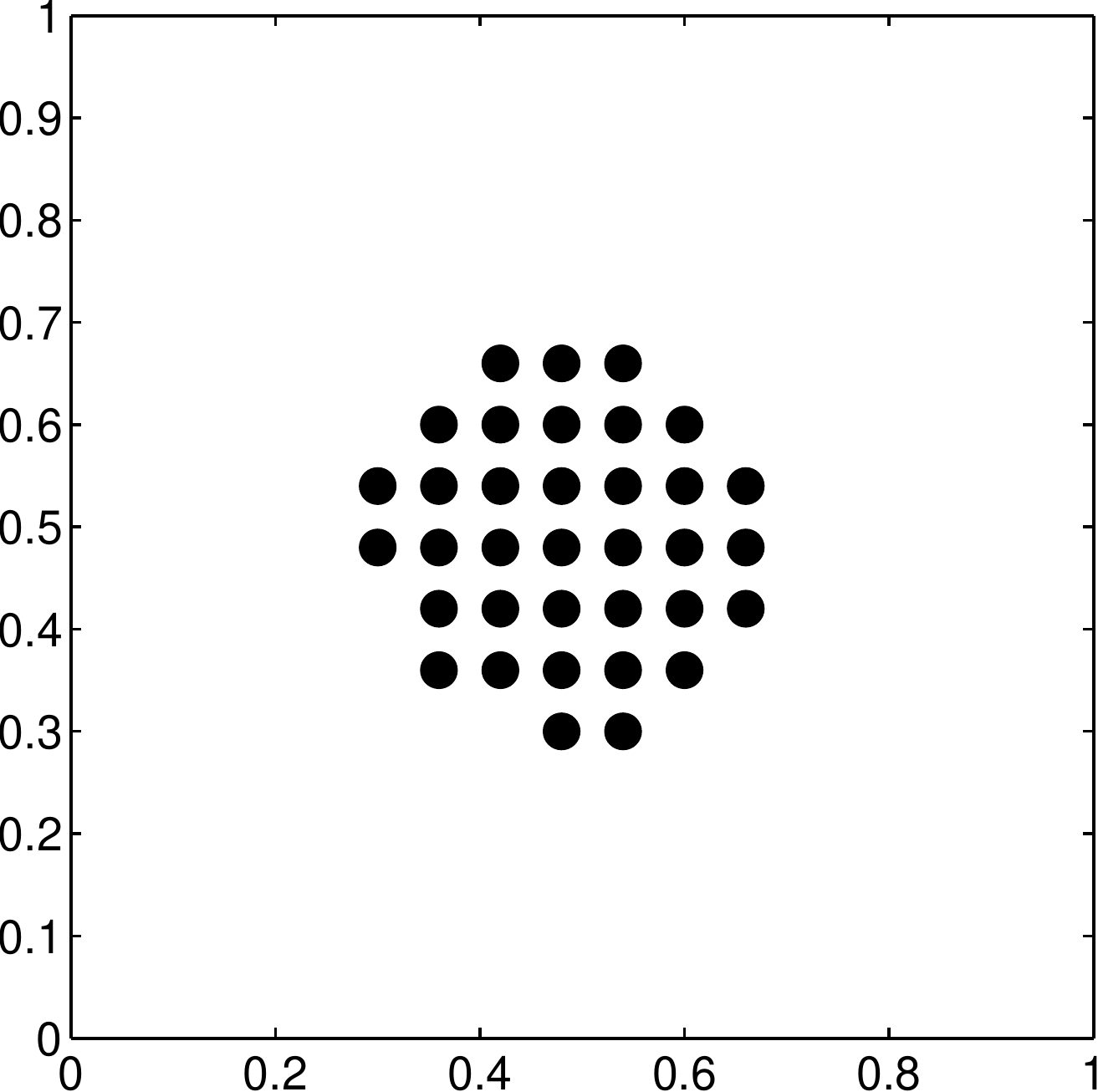}
\caption{\label{f:ex2_act}Example 1: The panels illustrate the active sets for $s = 0.2$, $s =0.4$, $s =0.6$, and $s =0.8$. }
\end{figure}

\subsection{Example 3}\label{s:ex3}

Finally we present an example where $\Psi$ is as given in \eqref{eq:icPsi} with $\nu = 5e-3$. We illustrate the solution in Figure~\ref{f:ex3_sol} and the active set in Figure~\ref{f:ex3_act} for $s = 0.2, 0.4, 0.6$, and 
$s = 0.8$. As in section~\ref{s:ex2} we again observe that the $\Psi$ is a constant under the current configuration therefore we chose not to plot it here. 
\begin{figure}[h!]
\includegraphics[width=0.45\textwidth]{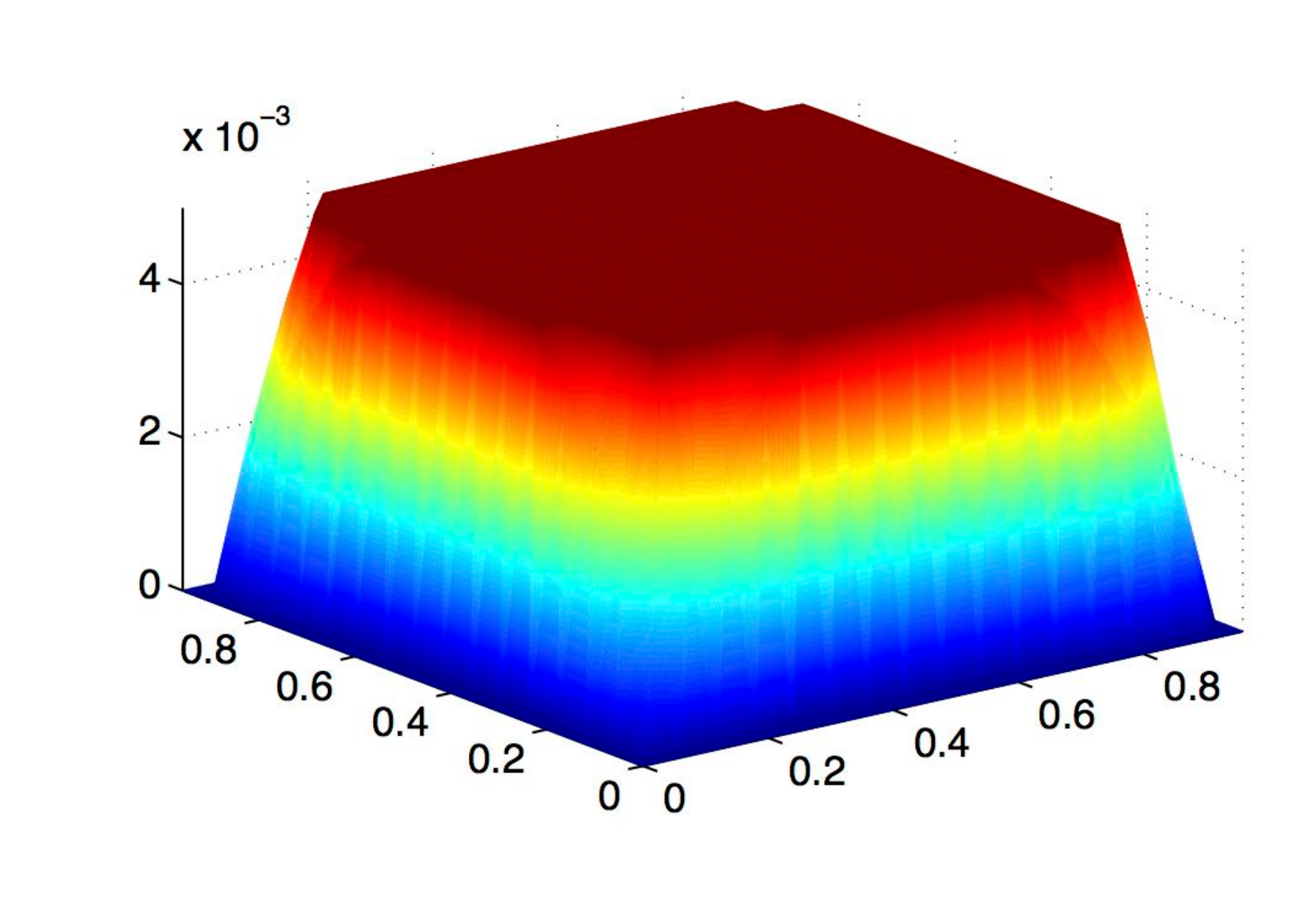}
\includegraphics[width=0.45\textwidth]{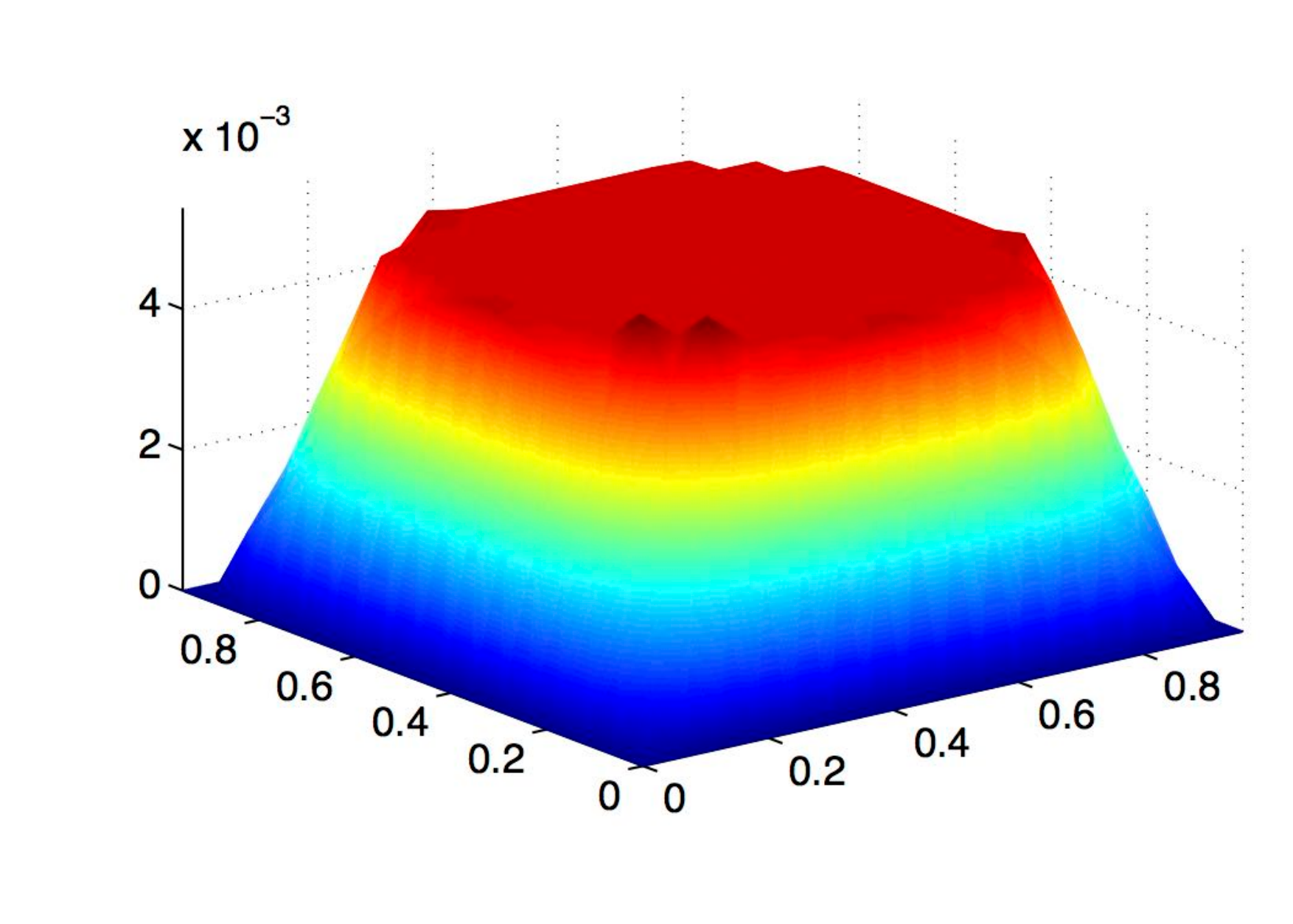}
\includegraphics[width=0.45\textwidth]{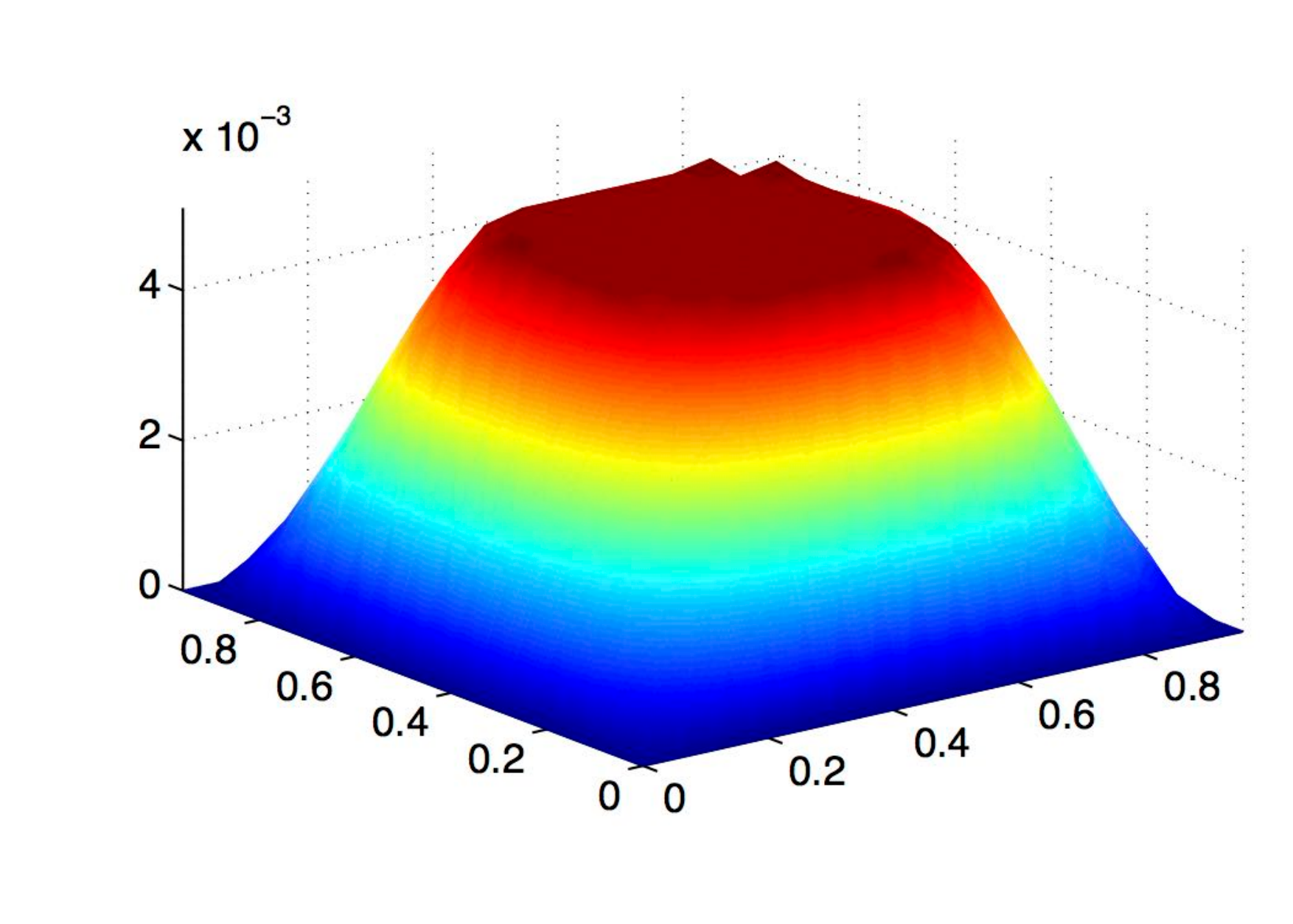}\quad\quad\quad
\includegraphics[width=0.45\textwidth]{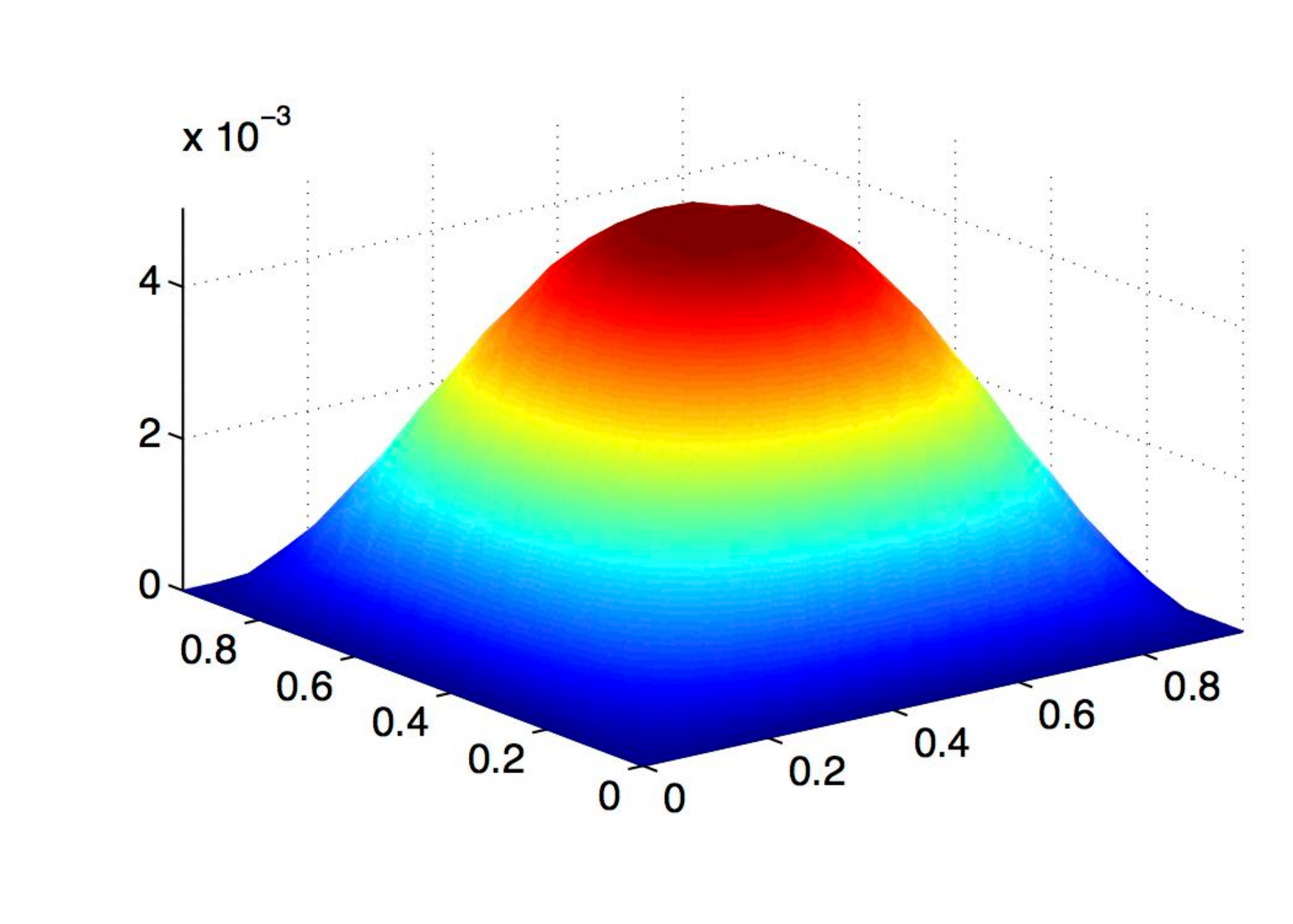}
\caption{\label{f:ex3_sol}Example 3: The top panels illustrate the solution for $s = 0.2$ (left) and $s =0.4$ (right). On the other hand, the bottom panels show the solutions when $s = 0.6$ (left) and $s = 0.8$ (right). The changes with respect to $s$ are significant.}
\end{figure}
\begin{figure}[h!]
\includegraphics[width=0.245\textwidth]{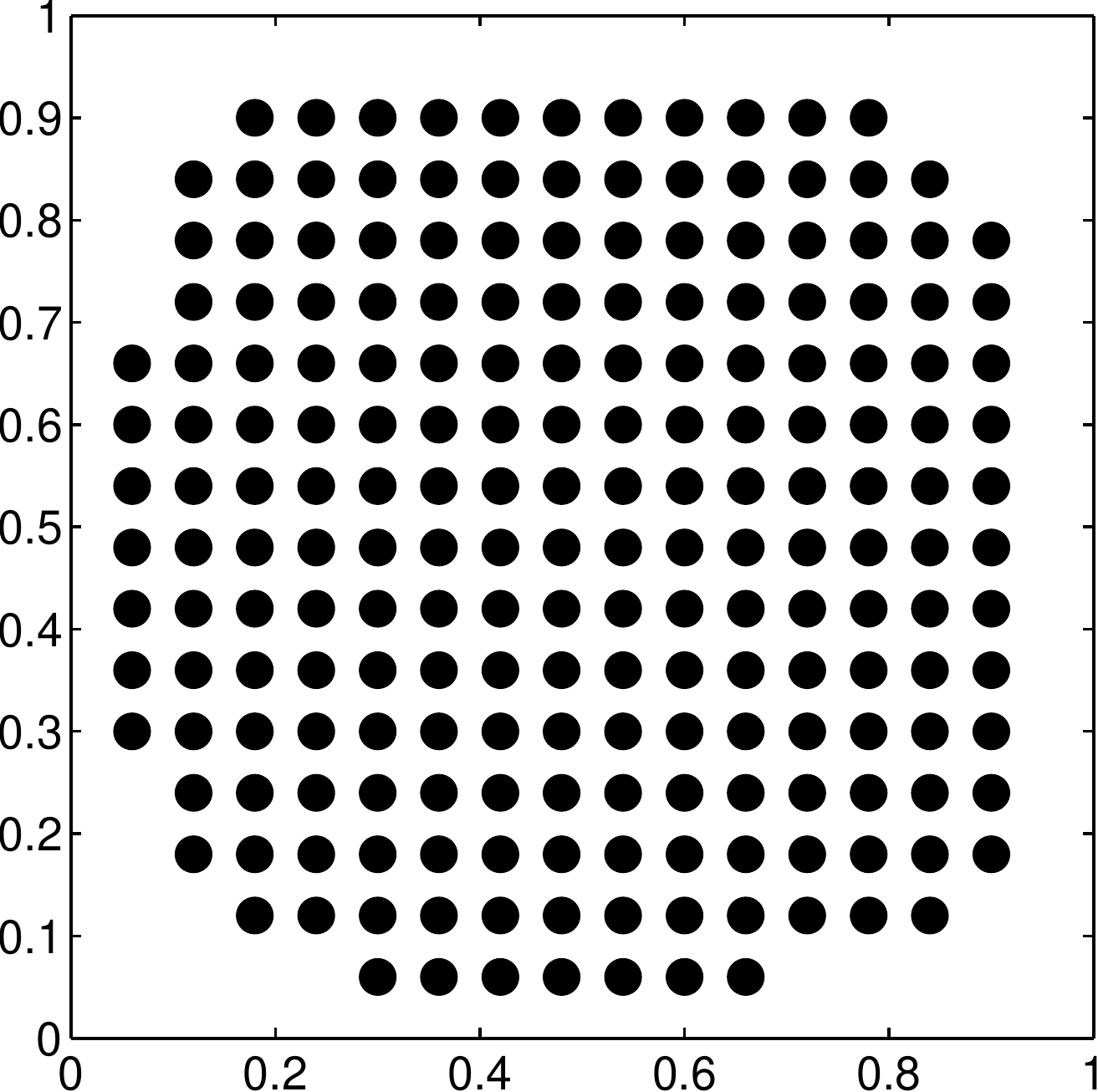}
\includegraphics[width=0.245\textwidth]{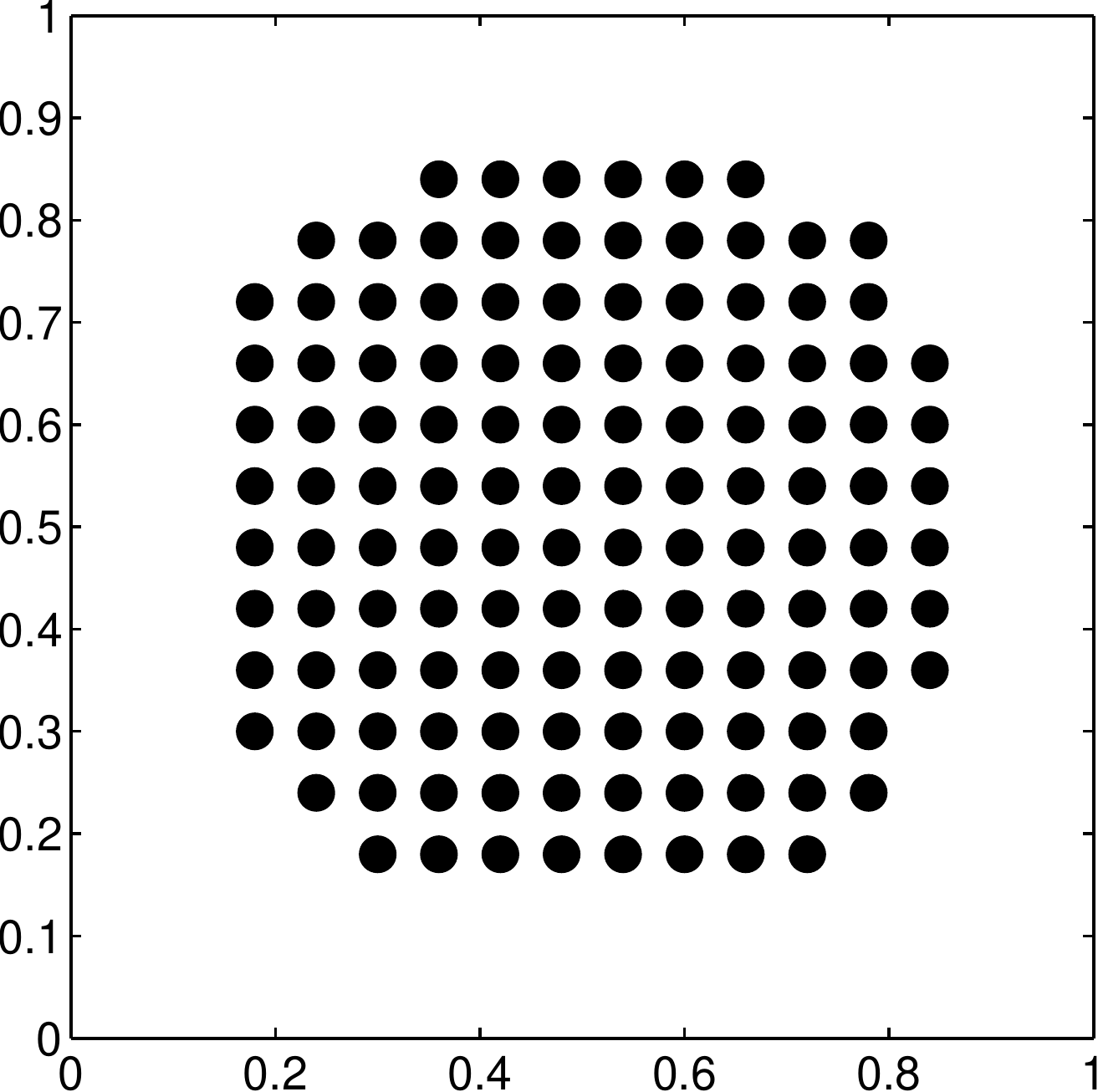}
\includegraphics[width=0.245\textwidth]{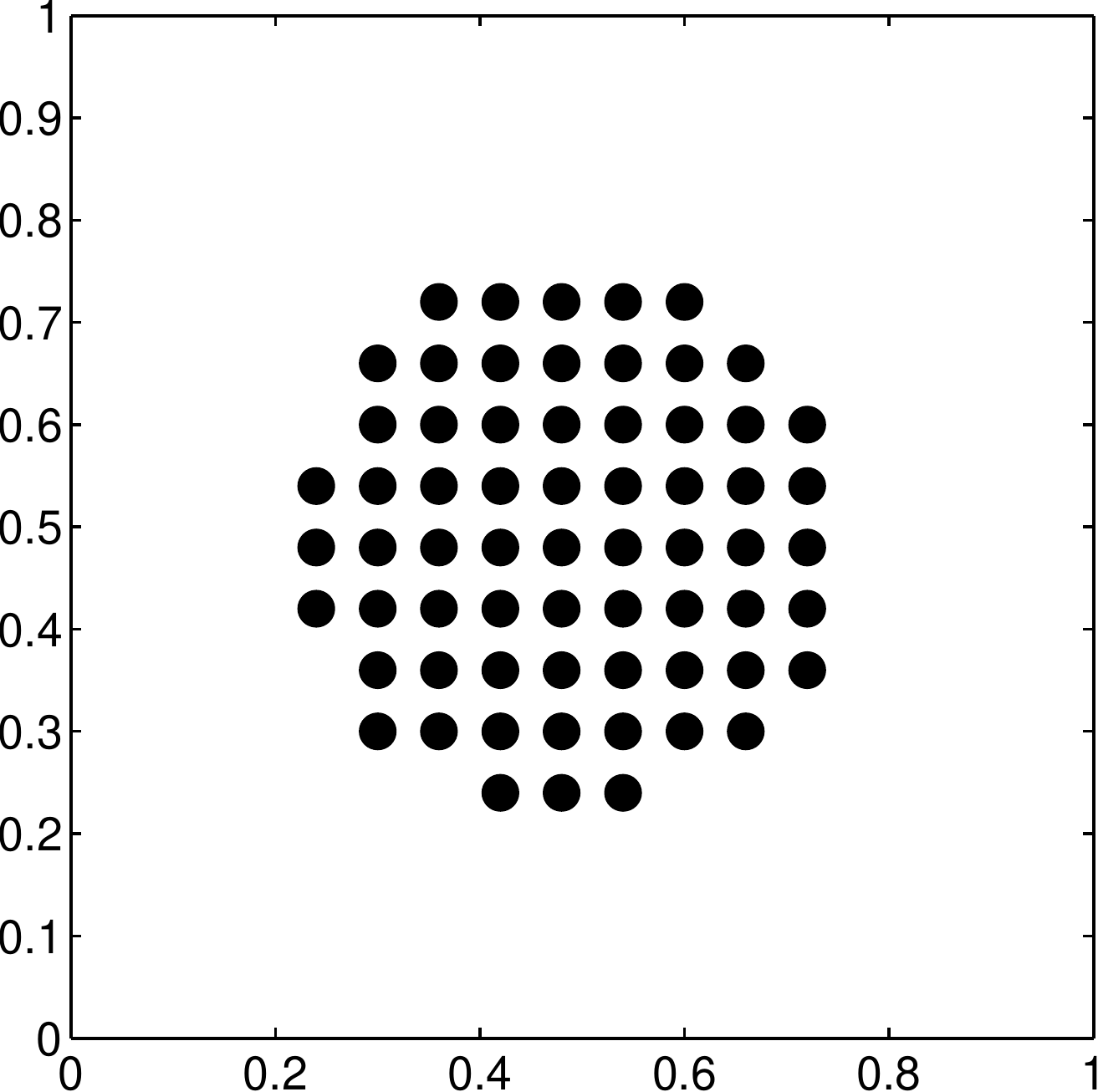}
\includegraphics[width=0.245\textwidth]{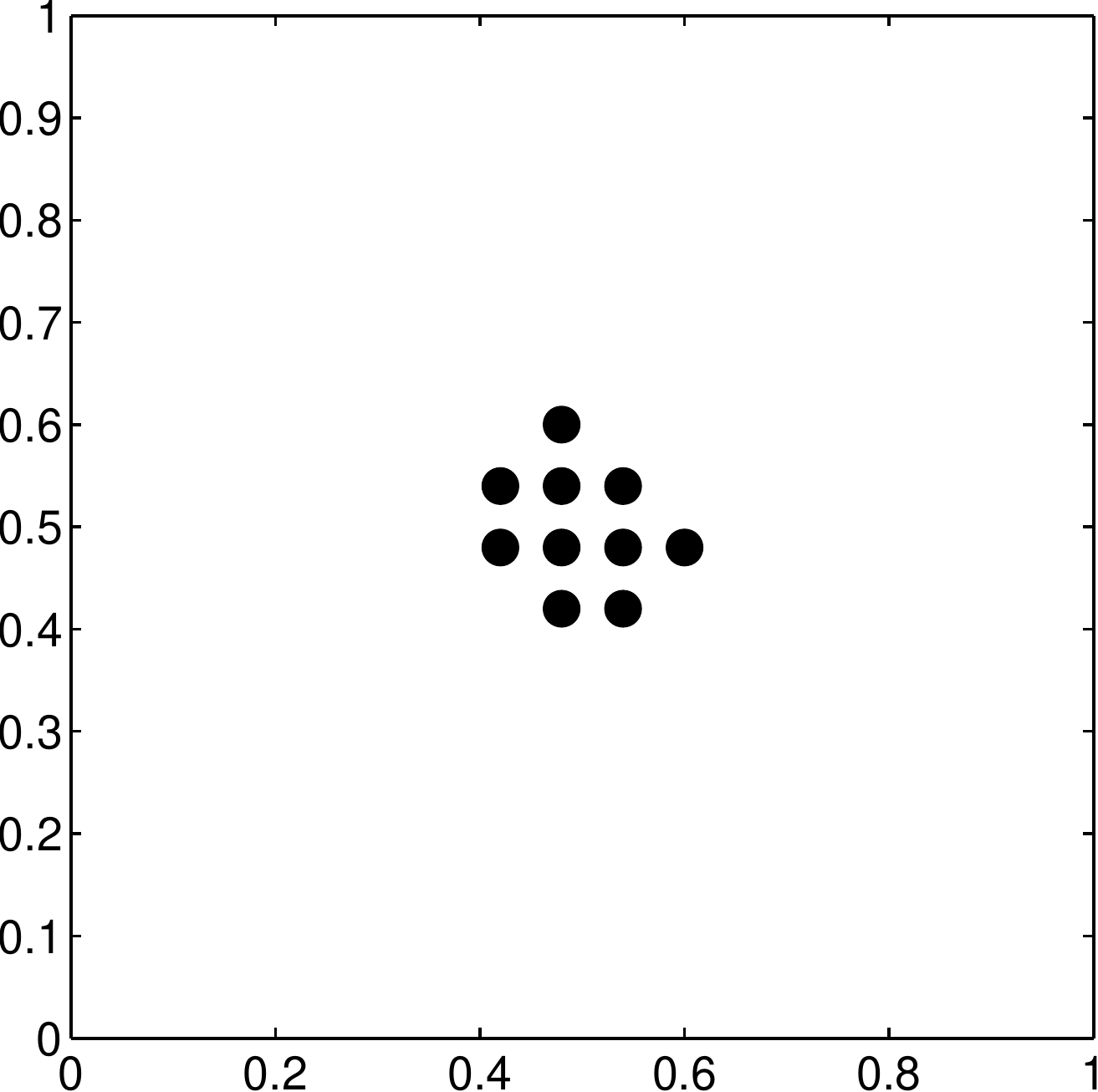}
\caption{\label{f:ex3_act}Example 3: The panels illustrate the active sets for $s = 0.2$, $s =0.4$, $s =0.6$, and $s =0.8$.}
\end{figure}
Clearly the solution behaves differently with respect to $s$. We observe that Algorithm~\ref{algo:ssn} takes 2 iterations to achieve \eqref{eq:eps1}.

\subsection{Example 4: $\fsf = 1$}

In the final example we consider $\fsf=1$. Notice that $1 \not\in \mathbb{H}^{1-s}(\Omega)$ when $s < \frac12$. We let 
\[
 \Psi(\usf) = 1.45 \left|\int_\Omega \usf \ dx \right| + \delta , 
\]
where $\delta = 1e-10$. 
We illustrate the solution in Figure~\ref{f:ex4_sol} and the active set in Figure~\ref{f:ex4_act} for $s = 0.2, 0.4, 0.6$, and $s = 0.8$. We have omitted the plots of $\Psi$ since it is constant. 
We again notice that it takes between 5 to 10 iterations for Algorithm~\ref{algo:ssn} to converge. On the other hand it takes $n = 131, 130, 130, 130$ iterations when $s = 0.2, 0.4, 0.6, 0.8$, respectively, for us to achieve the criterion in \eqref{eq:eps1}.

\begin{figure}[h!]
\includegraphics[width=0.45\textwidth]{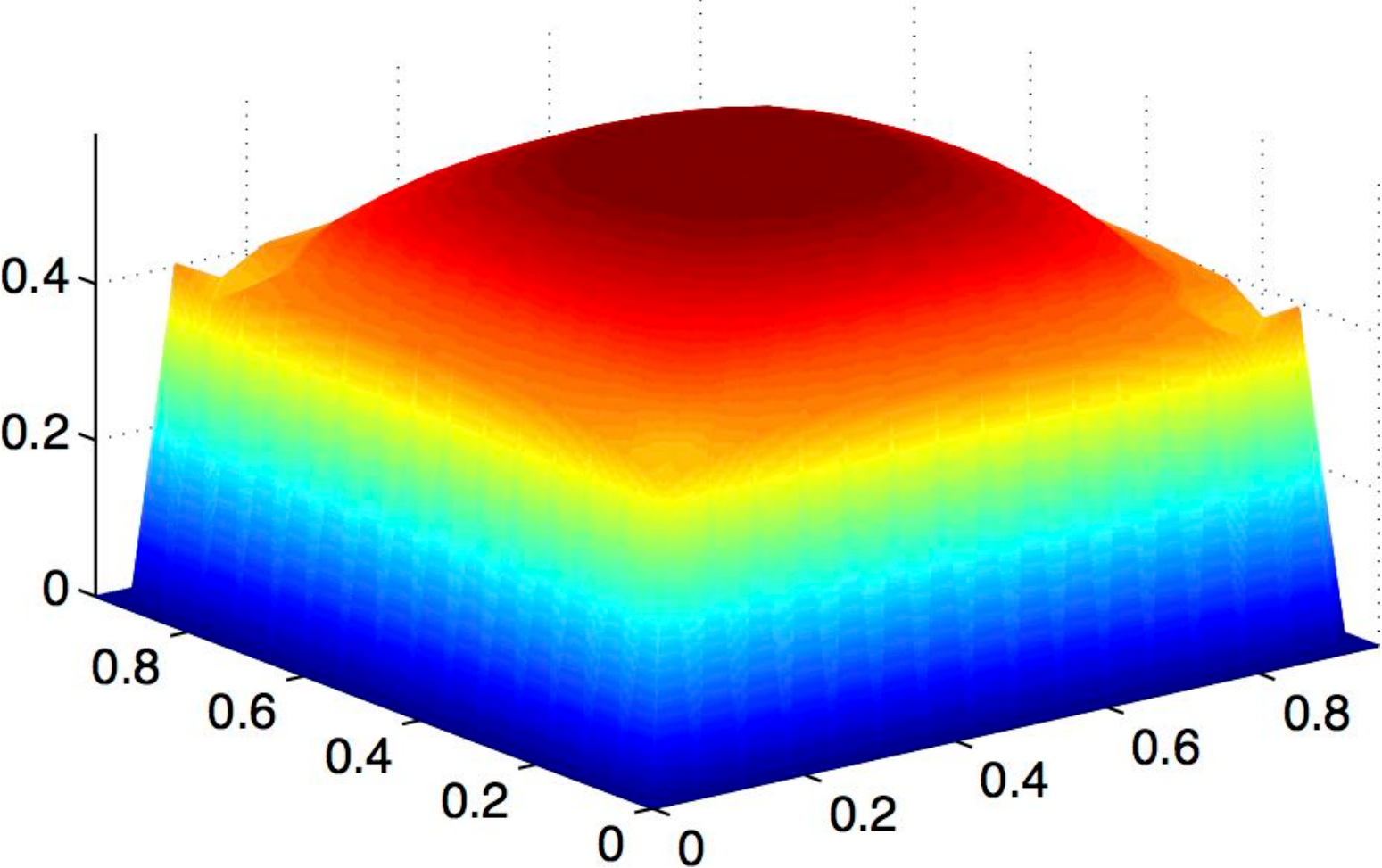}
\includegraphics[width=0.45\textwidth]{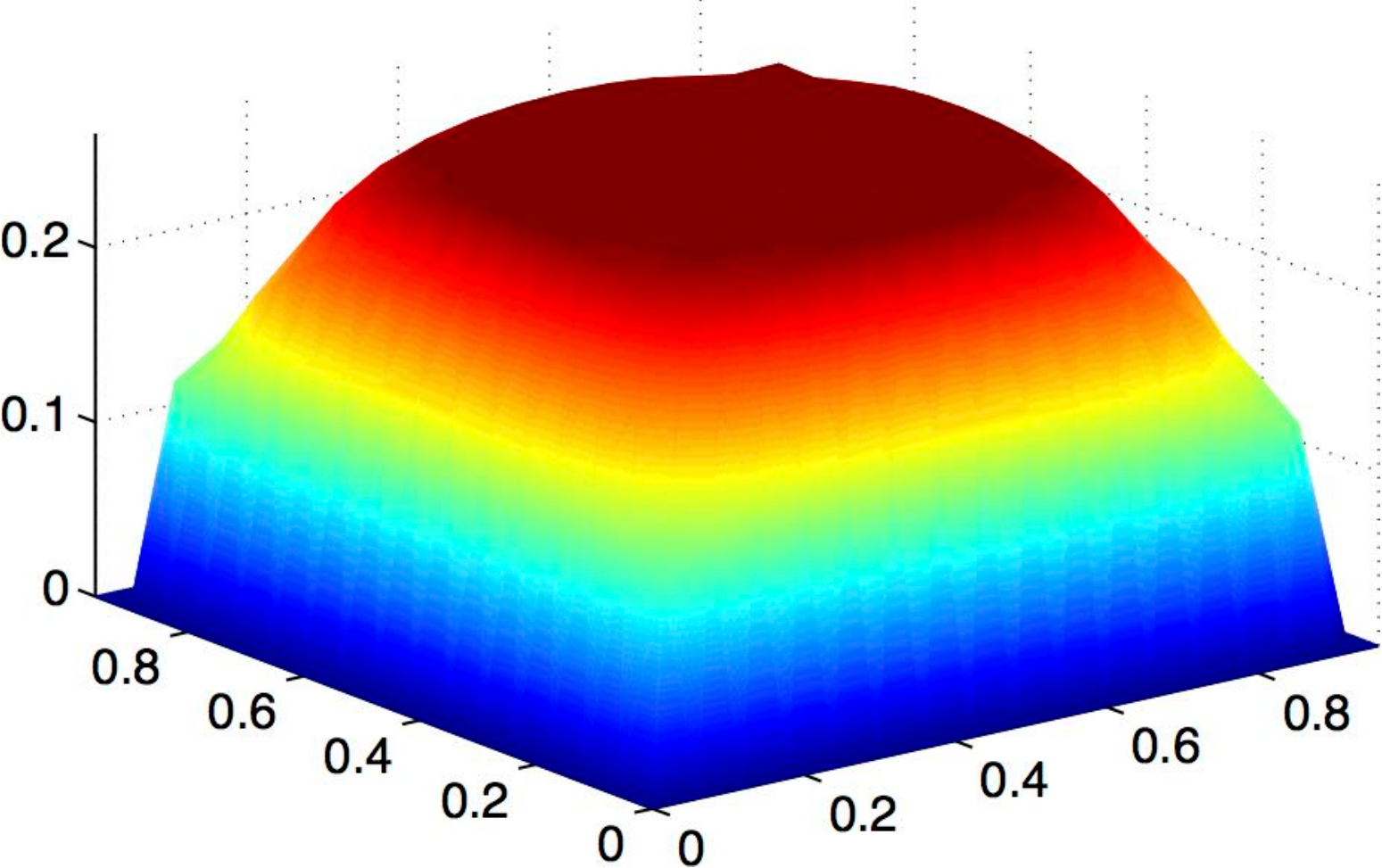}
\includegraphics[width=0.45\textwidth]{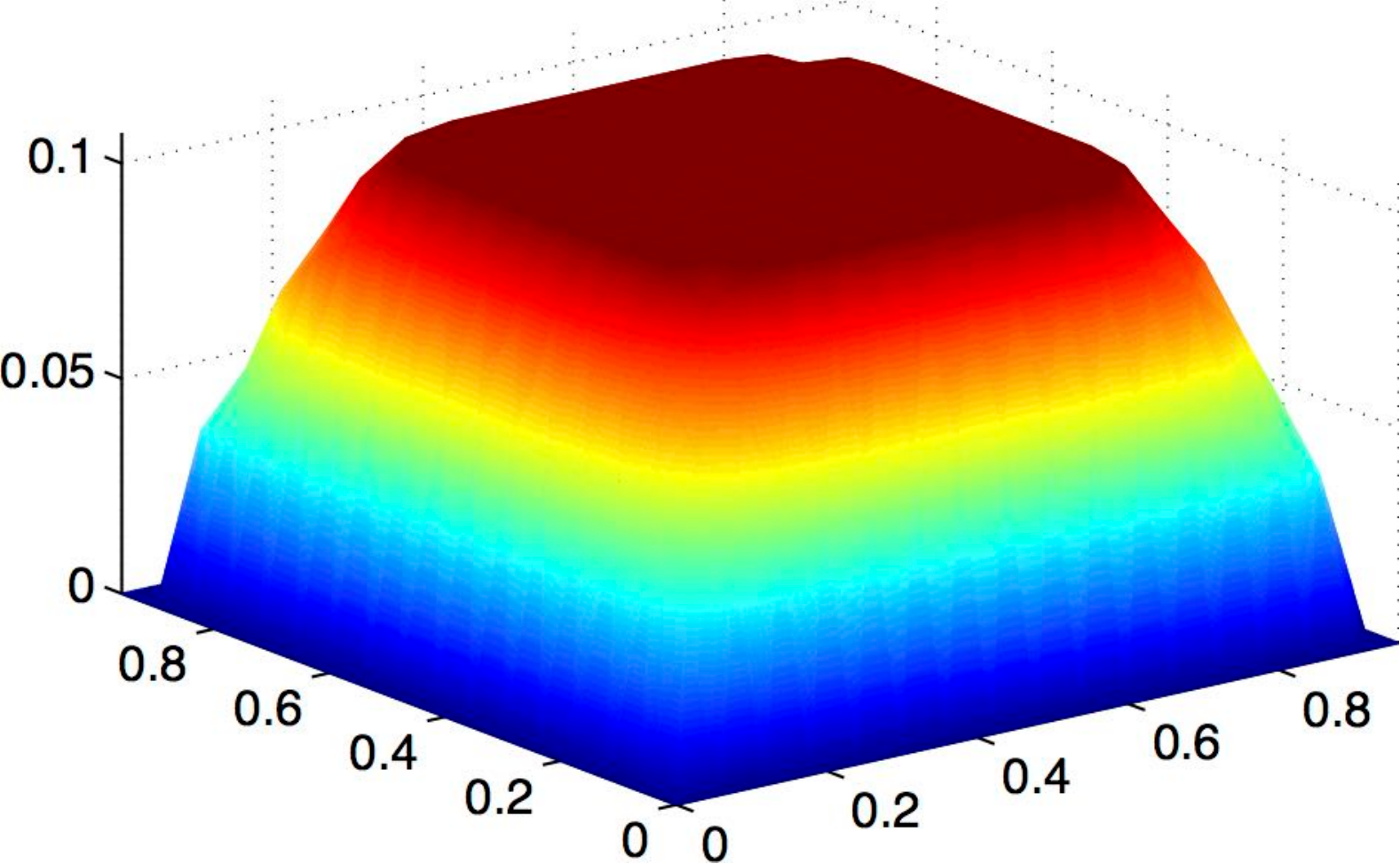}\quad\quad\quad
\includegraphics[width=0.45\textwidth]{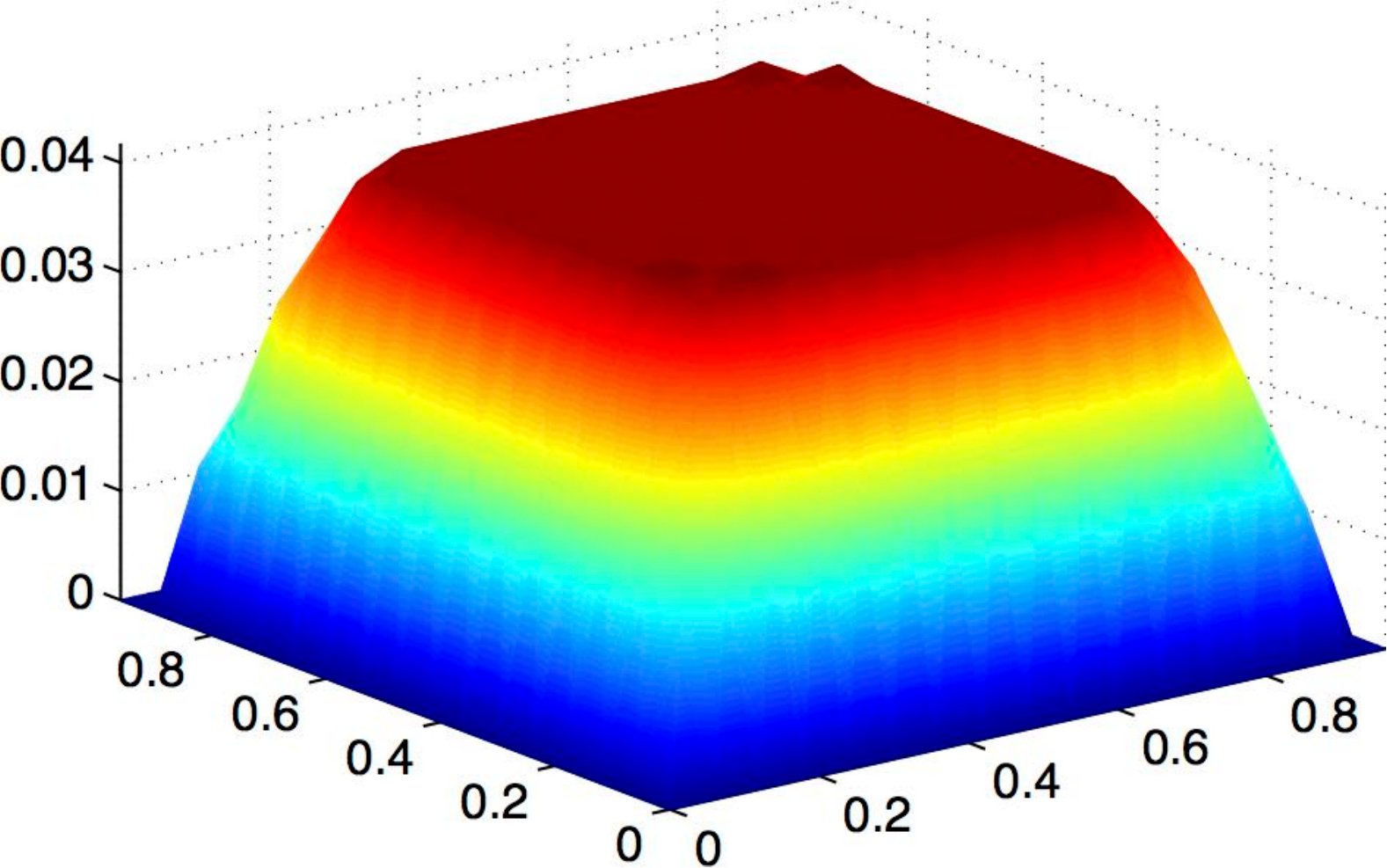}
\caption{\label{f:ex4_sol}Example 4: The top panels illustrate the solution for $s = 0.2$ (left) and $s =0.4$ (right). On the other hand, the bottom panels show the solutions when $s = 0.6$ (left) and $s = 0.8$ (right).}
\end{figure}

\begin{figure}[h!]
\includegraphics[width=0.245\textwidth]{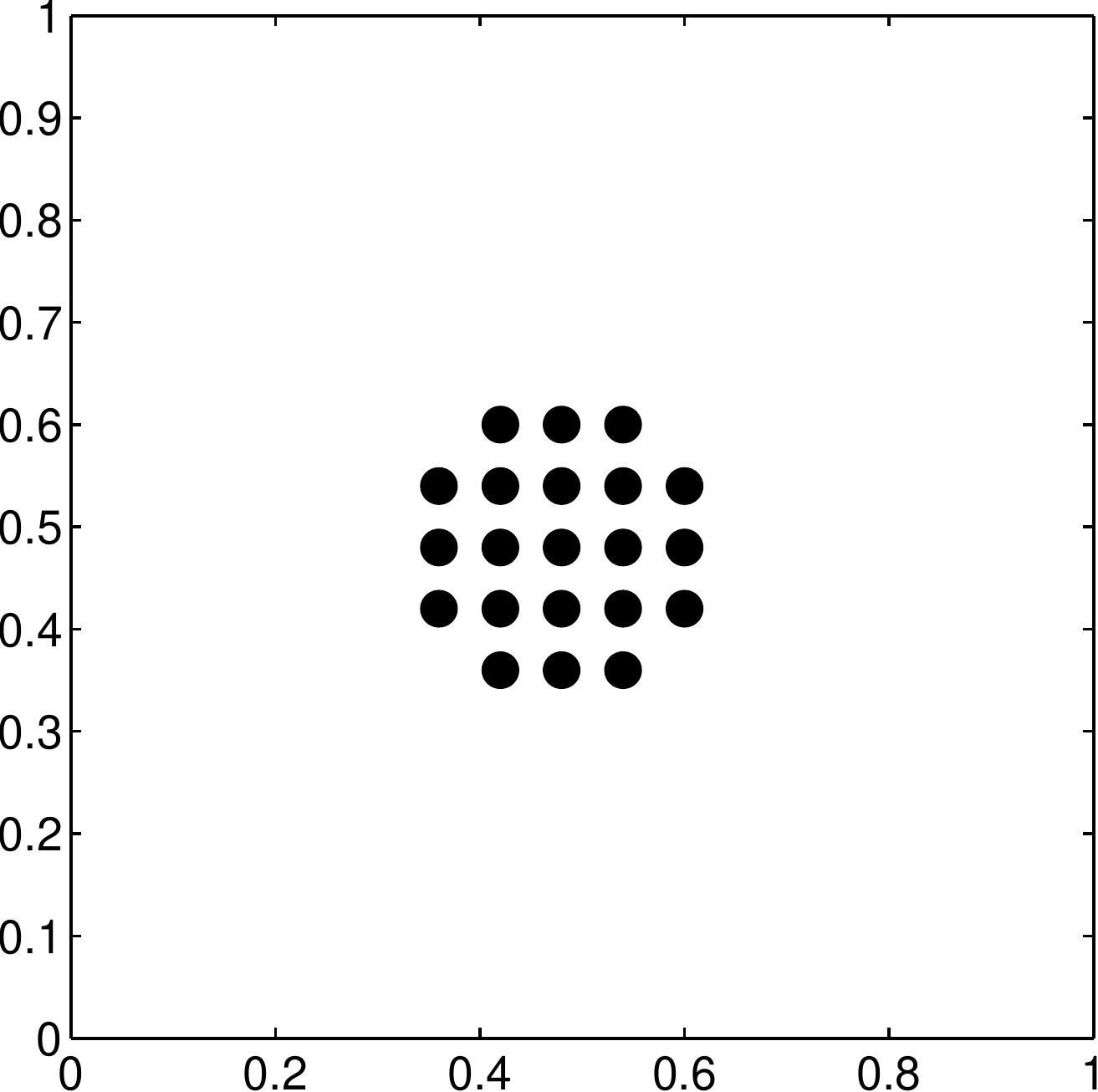}
\includegraphics[width=0.245\textwidth]{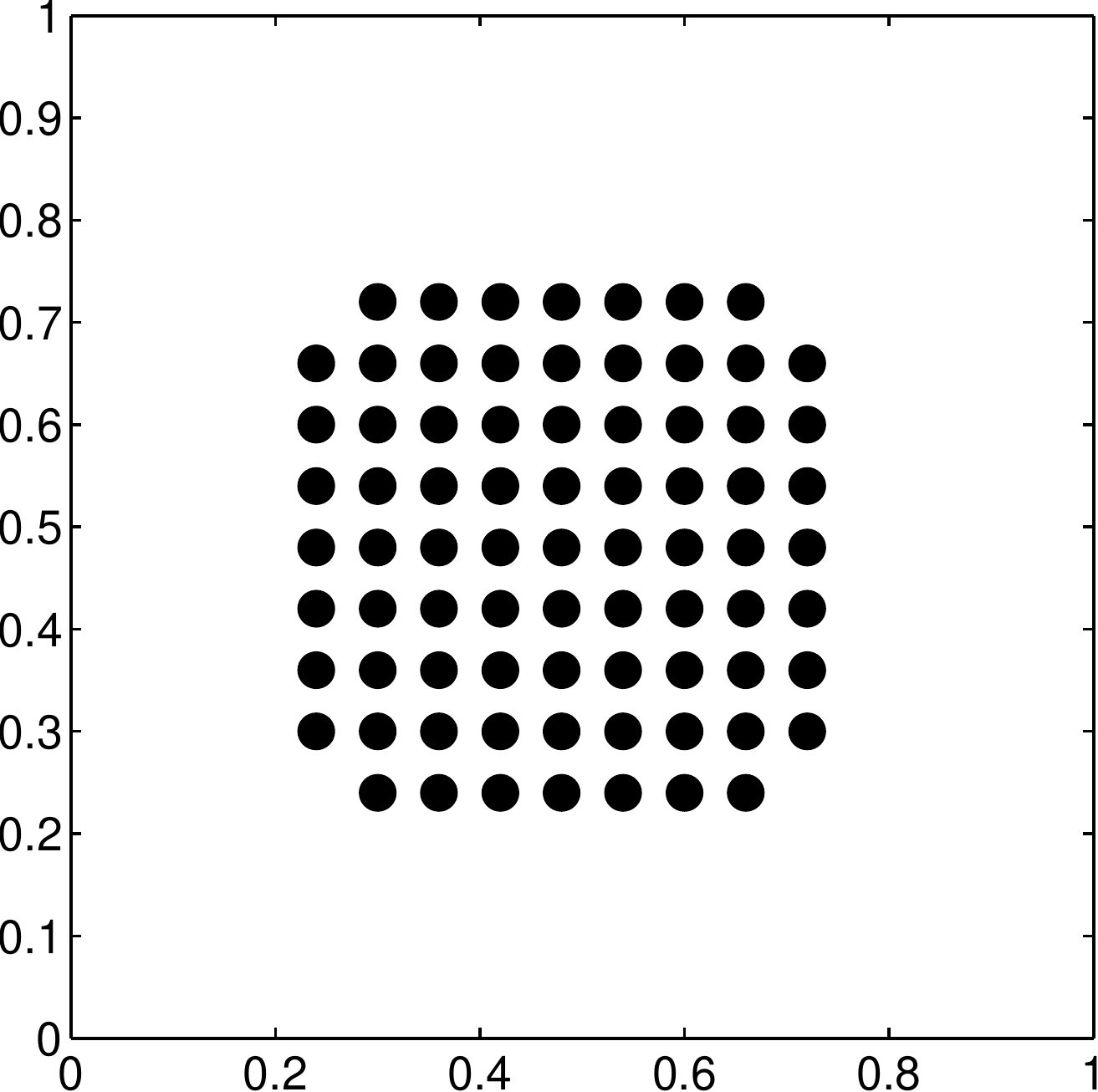}
\includegraphics[width=0.245\textwidth]{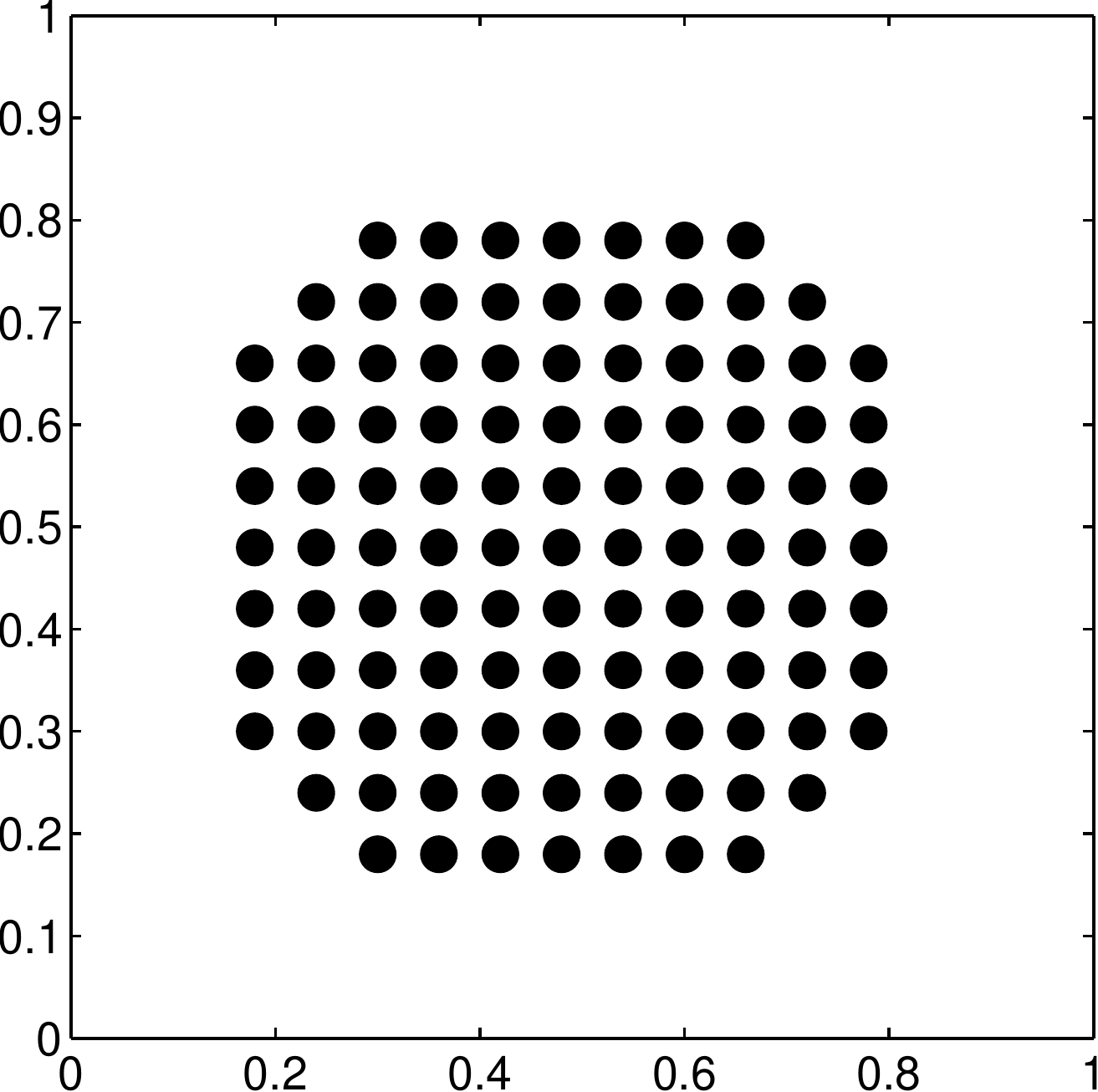}
\includegraphics[width=0.245\textwidth]{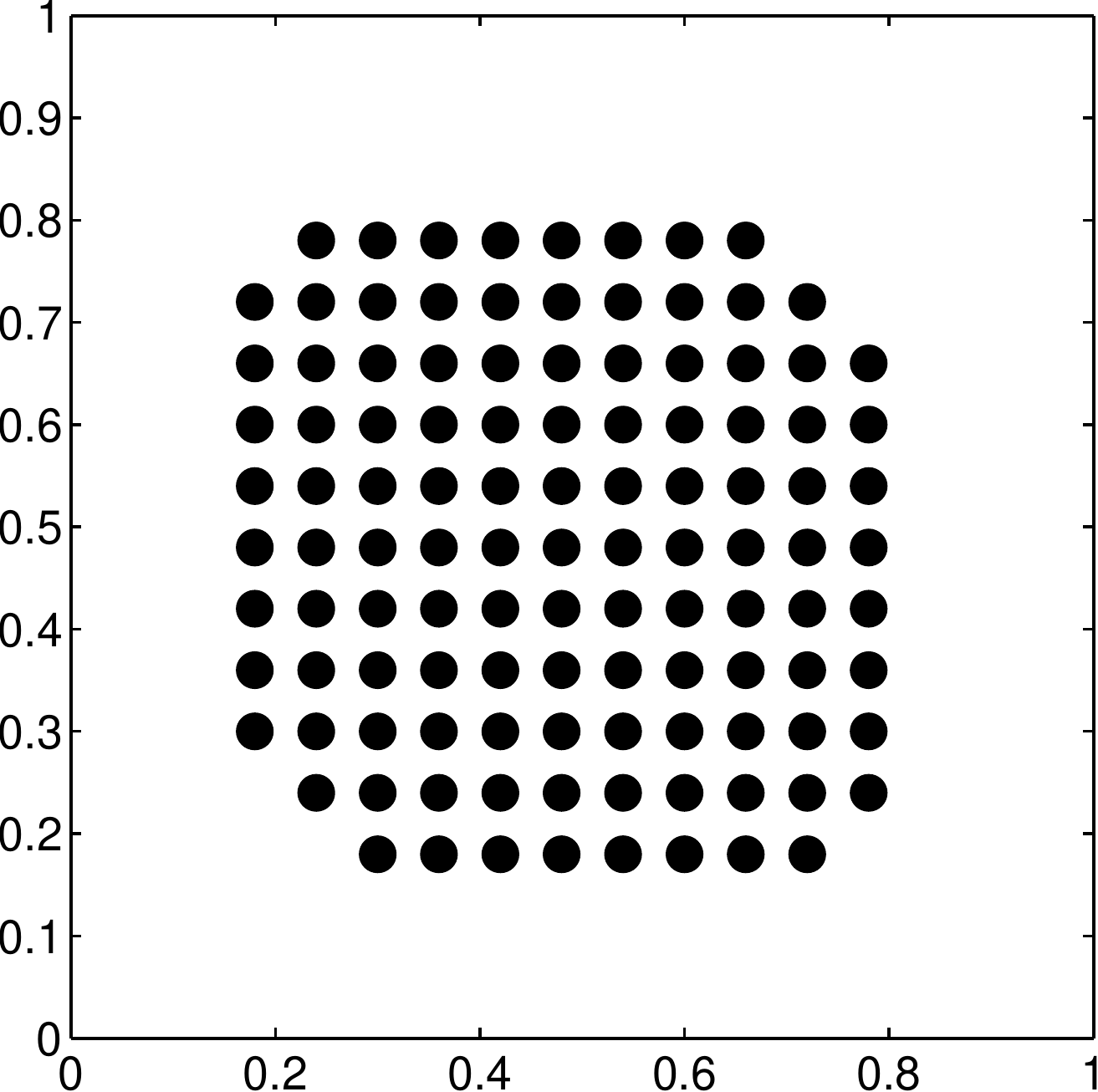}
\caption{\label{f:ex4_act}Example 4: The panels illustrate the active sets for $s = 0.2$, $s =0.4$, $s =0.6$, and $s =0.8$. }
\end{figure}

\subsection*{Acknowledgments}
HA has been supported in part by NSF grant DMS-1521590. CNR has been supported via the framework of {\sc Matheon} by the Einstein Foundation Berlin within the ECMath project SE15/SE19 and acknowledges the
support of the DFG through the DFG-SPP 1962: Priority Programme ``Non-smooth and Complementarity-based
Distributed Parameter Systems: Simulation and Hierarchical Optimization''  within Project 11.


\normalsize
\baselineskip=17pt


\bibliographystyle{abbrv}
\bibliography{biblio}

\def\cprime{$'$}
\begin{thebibliography}{10}

\bibitem{HAntil_SBartels_2017a}
H.~Antil and S.~Bartels.
\newblock Spectral {A}pproximation of {F}ractional {PDE}s in {I}mage
  {P}rocessing and {P}hase {F}ield {M}odeling.
\newblock {\em Comput. Methods Appl. Math.}, 17(4):661--678, 2017.

\bibitem{antil2017fractional}
H.~Antil, J.~Pfefferer, and S.~Rogovs.
\newblock Fractional operators with inhomogeneous boundary conditions:
  Analysis, control, and discretization.
\newblock {\em arXiv preprint arXiv:1703.05256}, 2017.

\bibitem{HAntil_JPfefferer_MWarma_2016a}
H.~Antil, J.~Pfefferer, and M.~Warma.
\newblock A note on semilinear fractional elliptic equation: analysis and
  discretization.
\newblock {\em Math. Model. Numer. Anal. (ESAIM: M2AN)}, 51:2049--2067, 2017.

\bibitem{antil2016optimal}
H.~Antil and M.~Warma.
\newblock Optimal control of the coefficient for fractional and regional
  fractional $p$-{L}aplace equations: Approximation and convergence.
\newblock {\em arXiv preprint arXiv:1612.08201}, 2016.

\bibitem{HAntil_MWarma_2017a}
H.~Antil and M.~Warma.
\newblock Optimal control of fractional semilinear pdes.
\newblock {\em arXiv preprint arXiv:1712.04336}, 2017.

\bibitem{aubin1982mathematical}
J.-P. Aubin.
\newblock {\em Mathematical Methods of Game and Economic Theory}, volume~7.
\newblock Newnes, 1982.

\bibitem{MR745619}
C.~Baiocchi and A.~Capelo.
\newblock {\em Variational and quasivariational inequalities}.
\newblock A Wiley-Interscience Publication. John Wiley \& Sons, Inc., New York,
  1984.
\newblock Applications to free boundary problems, Translated from the Italian
  by Lakshmi Jayakar.

\bibitem{MR2652615}
J.~Barrett and L.~Prigozhin.
\newblock A quasi-variational inequality problem in superconductivity.
\newblock {\em Math. Models Methods Appl. Sci.}, 20(5):679--706, 2010.

\bibitem{MR3082292}
J.~Barrett and L.~Prigozhin.
\newblock A quasi-variational inequality problem arising in the modeling of
  growing sandpiles.
\newblock {\em ESAIM Math. Model. Numer. Anal.}, 47(4):1133--1165, 2013.

\bibitem{MR3231973}
J.~Barrett and L.~Prigozhin.
\newblock Lakes and rivers in the landscape: a quasi-variational inequality
  approach.
\newblock {\em Interfaces Free Bound.}, 16(2):269--296, 2014.

\bibitem{MR3335194}
J.~Barrett and L.~Prigozhin.
\newblock Sandpiles and superconductors: nonconforming linear finite element
  approximations for mixed formulations of quasi-variational inequalities.
\newblock {\em IMA J. Numer. Anal.}, 35(1):1--38, 2015.

\bibitem{MR652685}
A.~Bensoussan.
\newblock {\em Stochastic control by functional analysis methods}, volume~11 of
  {\em Studies in Mathematics and its Applications}.
\newblock North-Holland Publishing Co., Amsterdam-New York, 1982.

\bibitem{MR756234}
A.~Bensoussan and J.-L. Lions.
\newblock {\em Impulse control and quasivariational inequalities}.
\newblock $\mu $. Gauthier-Villars, Montrouge; Heyden \& Son, Inc.,
  Philadelphia, PA, 1984.
\newblock Translated from the French by J. M. Cole.

\bibitem{MR1951035}
P.~Beremlijski, J.~Haslinger, M.~Ko{\v{c}}vara, and J.~Outrata.
\newblock Shape optimization in contact problems with {C}oulomb friction.
\newblock {\em SIAM J. Optim.}, 13(2):561--587, 2002.

\bibitem{MR2759829}
H.~Brezis.
\newblock {\em Functional analysis, {S}obolev spaces and partial differential
  equations}.
\newblock Universitext. Springer, New York, 2011.

\bibitem{MR3100955}
L.~Caffarelli and A.~Figalli.
\newblock Regularity of solutions to the parabolic fractional obstacle problem.
\newblock {\em J. Reine Angew. Math.}, 680:191--233, 2013.

\bibitem{caffarelli2008regularity}
L.~Caffarelli, S.~Salsa, and L.~Silvestre.
\newblock Regularity estimates for the solution and the free boundary of the
  obstacle problem for the fractional {L}aplacian.
\newblock {\em Inventiones mathematicae}, 171(2):425--461, 2008.

\bibitem{LCaffarelli_LSilvestre_2007a}
L.~Caffarelli and L.~Silvestre.
\newblock An extension problem related to the fractional {L}aplacian.
\newblock {\em Comm. Part. Diff. Eqs.}, 32(7-9):1245--1260, 2007.

\bibitem{MR3489634}
L.~Caffarelli and P.~Stinga.
\newblock Fractional elliptic equations, {C}accioppoli estimates and
  regularity.
\newblock {\em Ann. Inst. H. Poincar\'e Anal. Non Lin\'eaire}, 33(3):767--807,
  2016.

\bibitem{ACapella_JDavila_LDupaigne_YSire_2011a}
A.~Capella, J.~D{\'a}vila, L.~Dupaigne, and Y.~Sire.
\newblock Regularity of radial extremal solutions for some non-local semilinear
  equations.
\newblock {\em Comm. Part. Diff. Eqs.}, 36(8):1353--1384, 2011.

\bibitem{wow}
W.~Chen.
\newblock A speculative study of $2/3$-order fractional {L}aplacian modeling of
  turbulence: Some thoughts and conjectures.
\newblock {\em Chaos}, 16(2):1--11, 2006.

\bibitem{NEGRETE}
D.~del Castillo-Negrete, B.~A. Carreras, and V.~E. Lynch.
\newblock Fractional diffusion in plasma turbulence.
\newblock {\em Physics of Plasmas}, 11(8):3854--3864, 2004.

\bibitem{MR0464857}
G.~Duvaut and J.-L. Lions.
\newblock {\em Les in\'equations en m\'ecanique et en physique}.
\newblock Dunod, Paris, 1972.
\newblock Travaux et Recherches Math{\'e}matiques, No. 21.

\bibitem{MR3299010}
T.~Fukao and N.~Kenmochi.
\newblock Quasi-variational inequality approach to heat convection problems
  with temperature dependent velocity constraint.
\newblock {\em Discrete Contin. Dyn. Syst.}, 35(6):2523--2538, 2015.

\bibitem{MR635927}
R.~Glowinski, J.-L. Lions, and R.~Tr{{\'e}}moli{{\`e}}res.
\newblock {\em Numerical analysis of variational inequalities}, volume~8 of
  {\em Studies in Mathematics and its Applications}.
\newblock North-Holland Publishing Co., Amsterdam-New York, 1981.
\newblock Translated from the French.

\bibitem{GU}
V.~Gol{\cprime}dshtein and A.~Ukhlov.
\newblock Weighted {S}obolev spaces and embedding theorems.
\newblock {\em Trans. Amer. Math. Soc.}, 361(7):3829--3850, 2009.

\bibitem{MR533179}
B.~Hanouzet and J.-L. Joly.
\newblock Convergence g{\'e}om{\'e}trique pour les it{\'e}r{\'e}s
  d{\'e}finissant la solution d'une {I}. {Q}. {V}. abstraite.
\newblock In {\em Proceedings of the {I}nternational {M}eeting on {R}ecent
  {M}ethods in {N}onlinear {A}nalysis ({R}ome, 1978)}, pages 425--431.
  Pitagora, Bologna, 1979.

\bibitem{MR538562}
B.~Hanouzet and J.-L. Joly.
\newblock Convergence uniforme des it{\'e}r{\'e}s d{\'e}finissant la solution
  d'in{\'e}quations quasi-variationnelles et application {\`a} la
  r{\'e}gularit{\'e}.
\newblock {\em Numer. Funct. Anal. Optim.}, 1(4):399--414, 1979.

\bibitem{harker1991generalized}
P.~Harker.
\newblock Generalized {N}ash games and quasi-variational inequalities.
\newblock {\em European journal of Operational research}, 54(1):81--94, 1991.

\bibitem{MR3023771}
M.~Hinterm{{\"u}}ller and C.~Rautenberg.
\newblock A sequential minimization technique for elliptic quasi-variational
  inequalities with gradient constraints.
\newblock {\em SIAM J. Optim.}, 22(4):1224--1257, 2012.

\bibitem{MR3119319}
M.~Hinterm{{\"u}}ller and C.~Rautenberg.
\newblock Parabolic quasi-variational inequalities with gradient-type
  constraints.
\newblock {\em SIAM J. Optim.}, 23(4):2090--2123, 2013.

\bibitem{MR3648950}
M.~Hinterm{\"u}ller and C.~N. Rautenberg.
\newblock On the uniqueness and numerical approximation of solutions to certain
  parabolic quasi-variational inequalities.
\newblock {\em Port. Math.}, 74(1):1--35, 2017.

\bibitem{MR2441683}
K.~Ito and K.~Kunisch.
\newblock {\em Lagrange multiplier approach to variational problems and
  applications}, volume~15 of {\em Advances in Design and Control}.
\newblock Society for Industrial and Applied Mathematics (SIAM), Philadelphia,
  PA, 2008.

\bibitem{MR3307334}
A.~Kadoya, N.~Kenmochi, and M.~Niezg{{\'o}}dka.
\newblock Quasi-variational inequalities in economic growth models with
  technological development.
\newblock {\em Adv. Math. Sci. Appl.}, 24(1):185--214, 2014.

\bibitem{MR2410744}
R.~Kano, N.~Kenmochi, and Y.~Murase.
\newblock Existence theorems for elliptic quasi-variational inequalities in
  {B}anach spaces.
\newblock In {\em Recent advances in nonlinear analysis}, pages 149--169. World
  Sci. Publ., Hackensack, NJ, 2008.

\bibitem{MR2997552}
N.~Kenmochi.
\newblock Parabolic quasi-variational diffusion problems with gradient
  constraints.
\newblock {\em Discrete Contin. Dyn. Syst. Ser. S}, 6(2):423--438, 2013.

\bibitem{MR1786735}
D.~Kinderlehrer and G.~Stampacchia.
\newblock {\em An introduction to variational inequalities and their
  applications}, volume~31 of {\em Classics in Applied Mathematics}.
\newblock Society for Industrial and Applied Mathematics (SIAM), Philadelphia,
  PA, 2000.
\newblock Reprint of the 1980 original.

\bibitem{MR2363978}
A.~Kravchuk and P.~Neittaanm{\"a}ki.
\newblock {\em Variational and quasi-variational inequalities in mechanics},
  volume 147 of {\em Solid Mechanics and its Applications}.
\newblock Springer, Dordrecht, 2007.

\bibitem{KO84}
A.~Kufner and B.~Opic.
\newblock How to define reasonably weighted {S}obolev spaces.
\newblock {\em Comment. Math. Univ. Carolin.}, 25(3):537--554, 1984.

\bibitem{MR1765905}
M.~Kunze and J.~Rodrigues.
\newblock An elliptic quasi-variational inequality with gradient constraints
  and some of its applications.
\newblock {\em Math. Methods Appl. Sci.}, 23(10):897--908, 2000.

\bibitem{Laetsch1975286}
T.~Laetsch.
\newblock A uniqueness theorem for elliptic quasi-variational inequalities.
\newblock {\em J. Functional Analysis}, 18:286--287, 1975.

\bibitem{MR2064019}
S.~Z. Levendorski{\u\i}.
\newblock Pricing of the {A}merican put under {L}\'evy processes.
\newblock {\em Int. J. Theor. Appl. Finance}, 7(3):303--335, 2004.

\bibitem{mosco1969convergence}
U.~Mosco.
\newblock Convergence of convex sets and of solutions of variational
  inequalities.
\newblock {\em Advances in Mathematics}, 3(4):510--585, 1969.

\bibitem{MR2648182}
Y.~Murase, R.~Kano, and N.~Kenmochi.
\newblock Elliptic quasi-variational inequalities and applications.
\newblock {\em Discrete Contin. Dyn. Syst.}, (Dynamical systems, differential
  equations and applications. 7th AIMS Conference, suppl.):583--591, 2009.

\bibitem{MR3393323}
R.~Nochetto, E.~Ot{\'a}rola, and A.~Salgado.
\newblock Convergence rates for the classical, thin and fractional elliptic
  obstacle problems.
\newblock {\em Philos. Trans. A}, 373(2050):20140449, 14, 2015.

\bibitem{RHNochetto_EOtarola_AJSalgado_2014a}
R.~H. Nochetto, E.~Ot{\'a}rola, and A.~J. Salgado.
\newblock A {PDE} approach to fractional diffusion in general domains: A priori
  error analysis.
\newblock {\em Found. Comput. Math.}, 15(3):733--791, 2015.

\bibitem{otarola2015finite}
E.~Ot{\'a}rola and A.~J. Salgado.
\newblock Finite element approximation of the parabolic fractional obstacle
  problem.
\newblock {\em SIAM J. Numer. Anal.}, 54(4):2619--2639, 2016.

\bibitem{MR2525009}
J.-S. Pang and M.~Fukushima.
\newblock Erratum: {Q}uasi-variational inequalities, generalized {N}ash
  equilibria, and multi-leader-follower games.
\newblock {\em Comput. Manag. Sci.}, 6(3):373--375, 2009.

\bibitem{MR1401169}
L.~Prigozhin.
\newblock On the {B}ean critical-state model in superconductivity.
\newblock {\em European J. Appl. Math.}, 7(3):237--247, 1996.

\bibitem{rodrigues1987obstacle}
J.~Rodrigues.
\newblock {\em Obstacle Problems in Mathematical Physics}.
\newblock North-Holland Mathematics Studies. Elsevier Science, 1987.

\bibitem{MR1765540}
J.~Rodrigues and L.~Santos.
\newblock A parabolic quasi-variational inequality arising in a
  superconductivity model.
\newblock {\em Ann. Scuola Norm. Sup. Pisa Cl. Sci. (4)}, 29(1):153--169, 2000.

\bibitem{MR2947539}
J.~Rodrigues and L.~Santos.
\newblock Quasivariational solutions for first order quasilinear equations with
  gradient constraint.
\newblock {\em Arch. Ration. Mech. Anal.}, 205(2):493--514, 2012.

\bibitem{SoVo}
R.~Song and Z.~Vondracek.
\newblock Potential theory of subordinate killed brownian motion in a domain.
\newblock {\em Probab. Theory Related Field}, (4):578--592, 2003.

\bibitem{PRStinga_JLTorrea_2010a}
P.~R. Stinga and J.~L. Torrea.
\newblock Extension problem and {H}arnack's inequality for some fractional
  operators.
\newblock {\em Comm. Part. Diff. Eqs.}, 35(11):2092--2122, 2010.

\bibitem{Tartar}
L.~Tartar.
\newblock {\em An introduction to {S}obolev spaces and interpolation spaces},
  volume~3 of {\em Lecture Notes of the Unione Matematica Italiana}.
\newblock Springer, Berlin, 2007.

\bibitem{Turesson}
B.~O. Turesson.
\newblock {\em Nonlinear potential theory and weighted {S}obolev spaces}.
\newblock Springer, 2000.

\end{thebibliography}

\end{document}